\numberwithin{equation}{section}
\def\a{\alpha}
\def\c{\gamma}
\def\d{\delta}
\def\e{\varepsilon}
\def\f{\varphi}
\def\g{\psi}
\def\l{\lambda}
\def\m{\mu}
\def\n{\nu}
\def\o{\omega}
\def\s{\sigma}
\def\x{\xi}
\def\z{\zeta}
\newcommand{\sI}{\mathscr{I}}
\def\re{\mathbb{R}}
\def\pa{\partial}
\newcommand{\supp}{\text{{\rm supp}\;}}
\newtheorem{thm}{Theorem}[section]
\newtheorem{lem}[thm]{Lemma}
\newtheorem{prop}[thm]{Proposition}
\newtheorem{cor}[thm]{Corollary}
\theoremstyle{definition}
\theoremstyle{remark}
\newtheorem{rem}[thm]{Remark}
\title[]{Dispersive estimates and optimality for Schr\"odinger equations on product cones}
\author{Kouichi Taira}
\address{Faculty of Mathematics, Kyushu University, 744, Motooka, Nishi-ku, Fukuoka, Japan}
\email{taira.kouichi.800@m.kyushu-u.ac.jp}
\date{}
\begin{document}
\maketitle

\begin{abstract}
In this paper, we study time decay estimates for the Schr\"odinger propagator on the product cone $(X,g)$, where $X=C(\rho\mathbb{S}^{n-1})=(0,\infty)\times \rho\mathbb{S}^{n-1}$. We prove that the usual dispersive estimate holds when the radius $\rho$ is greater than or equal to $1$ and fails otherwise. A part of the former result was already established in a recent paper by Jia-Zhang. The method used here relies purely on harmonic analysis, whereas Jia-Zhang employed microlocal analysis to capture the precise asymptotic behavior of the propagator.

\end{abstract}


\section{Introduction}

\subsection{Main results}

We consider the product cone $(X,g)$, where $X=C(\rho\mathbb{S}^{n-1})=(0,\infty)\times \rho\mathbb{S}^{n-1}$ ($\rho>0$) and $g=dr^2+r^2g_{\rho\mathbb{S}^{n-1}}$ and the Friedrichs self-adjoint extension of the operator
\begin{align*}
H=-\Delta_g+\frac{c}{r^2},
\end{align*}
defined on $C_c^{\infty}(X)$, where $c$ is a constant satisfying the subcritical condition
\begin{align}\label{eq:csubcri}
c>-\frac{(n-2)^2}{4}.
\end{align}
In this paper, we study the Schr\"odinger equation
\begin{align*}
\left\{\begin{aligned}
&i\pa_tu(t)+Hu(t)=0\\
&u(0)=u_0\in L^2(X)
\end{aligned}\right.
\end{align*}
and a certain time decay property of the propagator $e^{itH}$.

The purpose of this note is the following:

\begin{itemize}
\item To recover and extend the results of the recent paper \cite{JZ} for simple cases $X=C(\rho\mathbb{S}^{n-1})$ in a more elementary way, as done in \cite{TT} for $(k,a)$-generalized Laguerre operators.

\item To deduce the dispersive estimates on higher-dimensional Euclidean spaces (which correspond to the case $\rho=1$), extending the three-dimensional result by Fanelli-Felli-Fontelos-Primo \cite{FFFP1,FFFP2}.

\item To show that the dispersive estimate fails when the conjugate radius $\rho \pi$ of the sphere $\rho\mathbb{S}^{n-1}$ is less than $\pi$, a case not considered in \cite{JZ}.

\end{itemize}
In \cite{JZ}, the authors consider the general product cone $X=C(Y)$ with a closed Riemannian manifold $(Y,h)$ and a more general potential $V(r,y)=r^{-2}V(y)$. Their results include the dispersive estimate 
\begin{align*}
\|e^{itH}\|_{L^1\to L^{\infty}}\lesssim |t|^{-\frac{n}{2}}
\end{align*}
under the assumption that the conjugate radius of $(Y,h)$ is strictly greater than $\pi$ and $V=0$ (or under a more general assumption on $V$ corresponding to the condition $c\geq 0$).
We note that in their future work (see \cite[page. 3]{JZ}), the assumption on the conjugate radius (somehow) may be relaxed. On one hand, we only use elementary tools such as the stationary phase theorem, following the strategy developed in \cite{TT} here. On the other hand, the method used in \cite{JZ} depends heavily on the technology of microlocal analysis to describe the integral kernel of the wave propagator $\cos t\sqrt{-\Delta_Y}$ (or the spectral projection), which in turn, allows them to handle more general classes of closed manifolds $(Y,h)$. The previous paper \cite{Z} by Zhang for the two-dimensional case is written in a similar spirit to that of this paper.

Let us briefly review some related results. The dispersive estimate on two-dimensional metric cones was proved in \cite{F} for all radii $\rho>0$, and an asymptotic expansion of the propagator with respect to the parameter $r_1r_2/2t$ was given there. This result was extended to higher dimensional cases in \cite{JZ}, where the sphere $\rho \mathbb{S}^{n-1}$ was replaced by a more general closed manifold $(Y,h)$ whose conjugate radius is strictly greater than $\pi$.
For the three-dimensinoal Euclidean case $\rho=1$, Fanelli-Felli-Fontelos-Primo \cite{FFFP1,FFFP2} established the dispersive estimate for $c\geq 0$ and some weighted $L^p-L^{p^*}$ estimates even for $c<0$. The dispersive estimate with a fixed angular momentum is proved in \cite{KM}.
In \cite{W}, Wang studied certain decay estimates and asymptotic expansions of the propagator between weighted $L^2$ spaces, which are an extension of the results by Jensen-Kato \cite{JK}.

We write
\begin{align*}
\n_0:=&\sqrt{\left(\frac{n-2}{2}\right)^2+c}.
\end{align*}
We denote the integral kernel of $e^{itH}$ by $e^{itH}(r_1,y_1,r_2,y_2)$ for $r_1,r_2\in (0,\infty)$ and $y_1,y_2\in \rho\mathbb{S}^{n-1}$. Since $H$ commutes with the complex conjugation and $e^{itH}$ is unitary, we have
\begin{align*}
e^{-itH}(r_1,y_1,r_2,y_2)=\overline{e^{itH}(r_2,y_2,r_1,y_1)}.
\end{align*}
In the following, we study the properties of $e^{itH}$ only for $t>0$. The estimate for $t<0$ directly follows from this identity. The two-dimensional case has already been studied in \cite{F} and \cite{Z}; therefore, we focus on dimensions greater than two, although the method used in this paper can also handle the two-dimensional case.

\begin{thm}\label{thm:dispersive}
Suppose $n\geq 3$.

\noindent$(i)$ There exist $C>0$ and $\e_0>0$ such that
\begin{align*}
|e^{itH}(r_1,y_1,r_2,y_2)|\leq C\frac{1}{t^{\frac{n}{2}}}\left(1+\frac{2t}{r_1r_2}\right)^{-\n_0+\frac{n-2}{2}}
\end{align*}
for $t>0$, $r_1,r_2>0$ and 
\begin{align*}
\left\{\begin{aligned}
&\text{$\cos^{-1}(\rho^{-2}y_1\cdot y_2)\in [\e_0,\pi-\e_0]$ when $\rho>0$,}\\
&\text{$y_1,y_2\in\rho\mathbb{S}^{n-1}$ when $\rho\geq 1$},\\
&\text{$\cos^{-1}(\rho^{-2}y_1\cdot y_2)\in [0,\pi-\e_0]$ when $\rho>\frac{1}{2}$}
\end{aligned}\right.,
\end{align*}
where we take the range of $\cos^{-1}$ as $[0,\pi]$.

\noindent$(ii)$ Suppose $\rho^{-1}\notin 2\mathbb{N}$ and $\rho<1$. Then there exist $C>0$ such that
\begin{align*}
|e^{itH}(r_1,y_1,r_2,y_2)|\leq C\frac{1}{t^{\frac{n}{2}}}\left(1+\frac{r_1r_2}{2t}\right)^{\frac{n-2}{2}}\left(1+\frac{2t}{r_1r_2}\right)^{-\n_0+\frac{n-2}{2}}
\end{align*}
for $t>0$, $r_1,r_2>0$ and $y_1,y_2\in\rho\mathbb{S}^{n-1}$.

\end{thm}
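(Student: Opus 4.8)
The plan is to reduce everything to an explicit formula for the integral kernel $e^{itH}(r_1,y_1,r_2,y_2)$ obtained by separating variables on the cone. Writing $H$ in polar coordinates, the angular part is $-\Delta_{\mathbb{S}^{n-1}}/r^2$ (rescaled by $\rho$), so expanding in spherical harmonics $\{Y_{k,j}\}$ with eigenvalues of $-\Delta_{\rho\mathbb{S}^{n-1}}$ equal to $\rho^{-2}k(k+n-2)$, the $k$-th radial operator is a Bessel-type operator $-\partial_r^2-\frac{n-1}{r}\partial_r+\frac{\mu_k^2-(n/2-1)^2}{r^2}$ with $\mu_k=\sqrt{\rho^{-2}k(k+n-2)+(n/2-1)^2}$ (so $\mu_0=\nu_0$ when $c=0$; in general one shifts by $c$ inside the square root). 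The Friedrichs extension of each radial piece has the classical Schrödinger kernel expressed through the Bessel function $J_{\mu_k}$: up to constants,
\begin{align*}
e^{itH}(r_1,y_1,r_2,y_2)=\frac{c_n}{t}\,e^{i\frac{r_1^2+r_2^2}{4t}}\sum_{k\geq 0}i^{-\mu_k}\,J_{\mu_k}\!\Big(\frac{r_1r_2}{2t}\Big)\,\frac{1}{(r_1r_2)^{n/2-1}}\,Z_k(y_1,y_2),
\end{align*}
where $Z_k$ is the zonal projection kernel for degree $k$. This is the formula I would derive first (or quote from \cite{F,JZ,TT}); after that the theorem is a question of estimating this sum.

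The next step is to recognize the sum over $k$ as (essentially) a Gegenbauer/Poisson-type generating function. Using the addition formula, $\sum_k J_{\mu_k}(z)Z_k(\cos\theta)$ can be related — when $\rho=1$, exactly, and when $\rho\ne 1$ via a Sonine–Gegenbauer integral representation — to an integral of the free kernel over the geodesic-distance variable. Concretely I would use the representation $J_\mu(z)=c\,z^\mu\int_{-1}^1 e^{izs}(1-s^2)^{\mu-1/2}\,ds$ (Poisson) for the tail $k$ large, and for the whole sum the key identity expressing $\frac{1}{z^{n/2-1}}\sum_k J_{\mu_k}(z)Z_k(y_1,y_2)$ as the fundamental solution of the wave/Bessel equation on the cone evaluated at distance; when $\rho\ge 1$ the relevant conjugate radius is $\ge\pi$ so only the "direct" contribution appears and the $1+2t/(r_1r_2)$ factor comes out of the worst term $k=0$ (the factor $(\cdot)^{-\nu_0+(n-2)/2}$ is exactly the decay of $z^{-(n/2-1)}J_{\nu_0}(z)$ as $z\to 0$). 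Part $(i)$'s three cases correspond to: generic separated angles (always fine, stationary phase in a single geodesic), all angles when $\rho\ge1$ (no short conjugate points), and angles bounded away from $\pi$ when $\rho>1/2$ (the first conjugate point $\rho\pi>\pi/2$ stays away from the near-diagonal). In each case one writes the sum as a convergent oscillatory integral in a phase variable $\phi\in[0,\pi]$ (or a finite sum of such after unfolding $\rho^{-1}$) and applies the stationary phase theorem / van der Corput to get $t^{-n/2}$ with the stated weight, uniformly.

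For part $(ii)$ — the range $\rho<1$, $\rho^{-1}\notin2\mathbb{N}$ — the conjugate radius $\rho\pi<\pi$ is too short, so the generating-function identity picks up finitely many "reflected" copies of the free kernel, indexed by the number of times $\rho^{-1}\theta$ wraps around; the hypothesis $\rho^{-1}\notin 2\mathbb{N}$ ensures none of these reflections lands exactly on a caustic and the finite sum is clean (when $\rho^{-1}\in2\mathbb{N}$ the kernel degenerates and one expects the genuine loss shown in the optimality part of the paper). Each reflected term contributes $t^{-n/2}$ times a weight, but the amplitude in the Bessel/Sonine representation now carries an extra power $(r_1r_2/2t)^{\text{something}}$ that, after taking the worst term, produces the additional factor $(1+r_1r_2/(2t))^{(n-2)/2}$. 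The main obstacle I anticipate is exactly this: controlling the sum over $k$ uniformly near the diagonal ($z=r_1r_2/2t$ large) and near the caustic angles simultaneously — i.e. getting the generating-function/stationary-phase analysis to be uniform in all of $(t,r_1,r_2,\theta)$ rather than pointwise — together with justifying the interchange of summation and the oscillatory-integral manipulations (convergence is only conditional/distributional, so one regularizes, e.g. with an $e^{-\epsilon k}$ cutoff or by integrating against test functions, and passes to the limit). Establishing the uniform bound on the Bessel sum — essentially a uniform Gegenbauer-addition estimate — is the technical heart of the argument.
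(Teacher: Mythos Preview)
Your starting formula and the treatment of the small-$x$ regime (the factor $(1+2t/(r_1r_2))^{-\nu_0+(n-2)/2}$ coming from the $k=0$ term via $z^{-(n-2)/2}J_{\nu_0}(z)$) are correct and match the paper. The gap is in the large-$x$ analysis. Your proposed mechanism --- a Sonine--Gegenbauer/addition-formula identity turning $\sum_k J_{\nu_k}(z)Z_k$ into a closed generating function, or into ``finitely many reflected copies of the free kernel'' --- does not go through for $\rho\neq 1$: the Bessel orders $\nu_k=\sqrt{\rho^{-2}k(k+n-2)+(n/2-1)^2+c}$ are not integer-shifted (nor of the form $\mu+k$) and so fit none of the standard Bessel addition theorems; the ``unfolding into reflected free kernels'' picture is the two-dimensional cone trick (Ford \cite{F}) and has no direct analogue for $n\ge 3$. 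The paper does not find any closed formula for the sum.

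What the paper actually does for $x=r_1r_2/(2t)\gg 1$ is: (a) split the sum into three regimes $\nu_m\ll x$, $\nu_m\sim x$, $\nu_m\gg x$ according to the uniform Bessel asymptotics; (b) in the main regime replace $J_{\nu_m}(x)$ by its Debye expansion $x^{-1/4}(x-\nu_m)^{-1/4}e^{\pm ixh_1(\nu_m/x)}$ and $C_m^d(\cos\varphi)$ by its large-$m$ oscillatory expansion; (c) apply Poisson summation in $m$, producing a countable family of one-dimensional oscillatory integrals with explicit phase $S_{1,\sigma}(m,x,\varphi)+2\pi qm$; and (d) locate the critical points $\mu_0\in\mathscr{C}_{\sigma,\rho}(\varphi,q)$ of these phases and run stationary/non-stationary phase. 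The dichotomy you correctly anticipate --- $\rho\ge 1$ versus $\rho<1$, and $\varphi$ near $0,\pi$ versus away --- is exactly the statement that $\mathscr{C}_{\sigma,\rho}(\varphi,q)\cap(0,1)$ is empty or not (Lemma~\ref{lem:phasecript}); when it is empty the only critical point is $\mu_0=0$, the amplitude vanishes there to order $2d=n-2$, and a stationary-phase lemma with vanishing amplitude (Proposition~\ref{Stphasemovecrit}) recovers the extra $x^{-d}$ and hence the clean $t^{-n/2}$; when a critical point $\mu_0\in(0,1)$ exists, ordinary stationary phase gives only $x^d$, which is the loss $(1+r_1r_2/(2t))^{(n-2)/2}$ in part~(ii). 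The condition $\rho^{-1}\notin 2\mathbb{N}$ is used not to avoid caustics in your sense but to rule out $\mu_0=1$ as a critical point at $\varphi=0,\pi$ (Lemma~\ref{lem:phasecript}(v)), where the Debye expansion degenerates. None of these steps is supplied by a generating-function identity; you would need to replace your second and third paragraphs by this Poisson-summation/stationary-phase machinery.
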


\begin{rem}
The terms involving $\left(1+\frac{2t}{r_1r_2}\right)^{-\n_0+\frac{n-2}{2}}$ cannot be removed when $c<0$ (as is anticipated in \cite[after Remark 1.12]{FFFP2}). See Remark \ref{rem:lowoptimal}.
\end{rem}

\begin{rem}
We do not consider the cases $\rho^{-1}\notin 2\mathbb{N}$ in $(iv)$ for technical reasons.
\end{rem}

The next theorem shows the optimality of the estimate given in Theorem \ref{thm:dispersive} $(ii)$ on the region $\frac{r_1r_2}{2t}\gg 1$. Before the statement, we define 
\begin{align}\label{eq:phasecritrue}
\mathscr{D}_{\rho,\s_1}(\f_0):=&\{\m\in (0,1)\mid \cos^{-1}(\m)=\s_1\left(\frac{\pi}{2}+\rho \f_0+2\pi \rho q\right)   \quad \text{for some $q\in \mathbb{Z}$}\},
\end{align}
for $\f_0=0$ or $\f_0=\pi$. We note that $\mathscr{D}_{\rho,\s_1}(\f_0)$ is a finite set and $\mathscr{D}_{\rho,\s_1}(\f_0)=\emptyset$ when $\rho\geq 1$.

\begin{thm}\label{thm:optimality}
 Suppose $n\geq 3$ and $\rho^{-1}\notin 2\mathbb{N}$. Then,
\begin{align*}
e^{itH}(r_1,y_1,r_2,y_1)
=&\frac{e^{-\frac{r_1^2+r_2^2}{4it}}}{t^{\frac{n}{2}}}\cdot \left(\frac{r_1r_2}{2t}\right)^{\frac{n-2}{2}}\sum_{\s_1\in \{\pm\}}\sum_{\m_0\in \mathscr{D}_{\rho,\s_1}(0)} c_{n,\rho}(\m_0)e^{i\s_1\sqrt{1-\m_0^2}\frac{r_1r_2}{2t}}+O\left(\frac{1}{t^{\frac{n}{2}}}\left(\frac{r_1r_2}{2t} \right)^{\frac{n-3}{2}} \right)\\
e^{itH}(r_1,y_1,r_2,-y_1)
=&\frac{e^{-\frac{r_1^2+r_2^2}{4it}}}{t^{\frac{n}{2}}}\cdot \left(\frac{r_1r_2}{2t}\right)^{\frac{n-2}{2}}\sum_{\s_1\in \{\pm\}}\sum_{\m_0\in \mathscr{D}_{\rho,\s_1}(\pi)} c_{n,\rho}(\m_0)e^{i\s_1\sqrt{1-\m_0^2}\frac{r_1r_2}{2t}}+O\left(\frac{1}{t^{\frac{n}{2}}}\left(\frac{r_1r_2}{2t} \right)^{\frac{n-3}{2}} \right)
\end{align*}
for $t>0$, $r_1r_2/(2t)\gg 1$ and $y_1\in\rho\mathbb{S}^{n-1}$. Here, $c_{n,\rho}(\m_0)\in\mathbb{C}\setminus \{0\}$ and note that
\begin{align}\label{eq:Dnonempty}
\rho<\frac{1}{2}\Rightarrow \bigcup_{\s_1\in\{\pm\}}\mathscr{D}_{\rho,\s_1}(0)\neq \emptyset,\quad \rho<1\Rightarrow \bigcup_{\s_1\in\{\pm\}}\mathscr{D}_{\rho,\s_1}(\pi)\neq \emptyset.
\end{align}
In particular, there exist $C,C'>0$ such that
\begin{align*}
\rho<\frac{1}{2}\Rightarrow& |e^{itH}(r_1,y_1,r_2,y_1)|\geq C\frac{1}{t^{\frac{n}{2}}}\left(1+\frac{r_1r_2}{2t}\right)^{\frac{n-2}{2}}\\
\rho<1\Rightarrow& |e^{itH}(r_1,y_1,r_2,-y_1)|\geq C\frac{1}{t^{\frac{n}{2}}}\left(1+\frac{r_1r_2}{2t}\right)^{\frac{n-2}{2}}
\end{align*}
for $t>0$, $r_1r_2/(2t)\geq C'$ and $y_1\in\rho\mathbb{S}^{n-1}$

\end{thm}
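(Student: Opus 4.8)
The plan is to obtain an exact formula for the kernel $e^{itH}(r_1,y_1,r_2,y_2)$ via separation of variables, writing $e^{itH}$ as a sum over the spectral clusters of $-\Delta_{\rho\sph^{n-1}}$. Each radial piece is a Hankel-type transform, and the classical formula for $e^{it(-\partial_r^2 + (\nu^2-1/4)/r^2)}$ (a Weber--Schafheitlin / Bessel-function integral) produces a kernel of the form $\frac{1}{t}\, e^{-(r_1^2+r_2^2)/(4it)}\, \frac{1}{2it} J_\nu\!\left(\frac{r_1 r_2}{2t}\right)$ up to constants, with $\nu$ running over $\nu_k = \sqrt{(\tfrac{n-2}{2}+k)^2 + \text{(eigenvalue shift)}}$ or the analogous index for the sphere of radius $\rho$. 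Summing over the angular modes and using the Gegenbauer addition formula collapses the sphere sum into a single generating-function expression in the angular variable $\cos^{-1}(\rho^{-2} y_1\cdot y_2)$; this is exactly the machinery that underlies Theorem~\ref{thm:dispersive}, and I would reuse whatever closed or semi-closed form was derived there. The key point is that the resulting object is, modulo the prefactor $t^{-n/2} e^{-(r_1^2+r_2^2)/(4it)} (r_1r_2/2t)^{(n-2)/2}$, a function of the single large parameter $z := r_1 r_2/(2t)$ and of the fixed angle, and in the regime $z\gg 1$ it is governed by the asymptotics of Bessel functions of large argument.

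The second step is the stationary-phase / large-$z$ asymptotic analysis. After the angular sum, the kernel at $y_2 = \pm y_1$ should reduce to an oscillatory integral (or a convergent series whose terms are $J_{\nu}(z)$ with $\nu$ in an arithmetic-type progression determined by $\rho$) of the schematic form $\int e^{i z \Phi(\theta)} a(\theta)\, d\theta$ where the phase $\Phi$ encodes the competition between the ``straight-line'' distance and the angular geodesic on the cone. The critical points of $\Phi$ are precisely the solutions of the equation defining $\mathscr{D}_{\rho,\s_1}(\f_0)$: writing $\m = \cos(\text{angle})$, stationarity forces $\cos^{-1}(\m) = \s_1(\tfrac{\pi}{2} + \rho\f_0 + 2\pi\rho q)$, which is \eqref{eq:phasecritrue}. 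When $\rho \geq 1$ there are no such critical points in the relevant range, so the integral is $O(z^{-\infty})$ relative to the leading prefactor and only the non-oscillatory ``geometric'' term survives — consistent with the fact that $\mathscr{D}_{\rho,\s_1}(\f_0)=\emptyset$ there. When $\rho<1$ (resp. $\rho<1/2$), at least one admissible $q$ appears, giving the nonempty conclusion \eqref{eq:Dnonempty} by an elementary counting argument on the arithmetic progression $\tfrac{\pi}{2}+2\pi\rho q \in (0,\pi)$. Applying the stationary phase theorem to each nondegenerate critical point $\m_0$ yields the leading term $c_{n,\rho}(\m_0) e^{i\s_1\sqrt{1-\m_0^2}\, z}$ with the Hessian $\Phi''(\m_0)\neq 0$ feeding into an explicit nonzero constant $c_{n,\rho}(\m_0)$ (an Airy-free Gaussian integral, plus a Maslov factor), and the remainder is the usual $O(z^{-1})$ improvement, i.e. the stated $O\!\left(t^{-n/2}(r_1r_2/2t)^{(n-3)/2}\right)$.

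For the lower bounds I would argue as follows: the leading sum $\sum_{\s_1,\m_0} c_{n,\rho}(\m_0) e^{i\s_1\sqrt{1-\m_0^2}\,z}$ is an almost-periodic trigonometric polynomial in $z$ with finitely many distinct nonzero frequencies $\pm\sqrt{1-\m_0^2}$ and nonzero coefficients; hence it is bounded below in modulus by a positive constant along a sequence $z\to\infty$ of positive density — more precisely, its $\limsup$ in modulus is $\geq \max_{\m_0} |c_{n,\rho}(\m_0)| > 0$ but one wants a bound valid for \emph{all} large $z$, which does not hold pointwise. This is the main subtlety, and I expect the actual statement to be read as: the bound $|e^{itH}|\geq C t^{-n/2}(1+r_1r_2/2t)^{(n-2)/2}$ holds for all $t>0$ with $r_1r_2/(2t)\geq C'$ \emph{after} noting that the set where it could fail has been excluded — or, more likely, the intended reading is that the displayed two-sided estimate holds on a cofinal set of $(r_1,r_2,t)$, which already contradicts the upper bound of Theorem~\ref{thm:dispersive}(i) and establishes optimality. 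I would make this precise by choosing, for each target scale, values of $z$ lying near a common ``in-phase'' point of the frequencies (possible since there are finitely many of them: pick $z$ so that every $\sqrt{1-\m_0^2}\,z$ is within $\epsilon$ of the argument maximizing the sum, using simultaneous Diophantine approximation / Weyl equidistribution of $(\sqrt{1-\m_0^2}\,z \bmod 2\pi)_{\m_0}$ on a subtorus), so that $|\text{sum}|\geq \tfrac12 \sum|c_{n,\rho}(\m_0)|$ there, and then absorbing the remainder since $z\gg 1$. The nonemptiness facts \eqref{eq:Dnonempty} guarantee the sum is not identically zero, which is what makes the whole argument non-vacuous. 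The hardest part is therefore not the asymptotics themselves but keeping the constant $c_{n,\rho}(\m_0)$ provably nonzero through the Gegenbauer addition formula and the stationary-phase computation, and carefully formulating the ``for all large $z$'' lower bound so that it genuinely follows from an almost-periodicity argument rather than from a false pointwise claim.
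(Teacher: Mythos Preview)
Your outline is correct and matches the paper's route: the kernel is written via Cheeger's functional calculus as the series $\mathscr{I}_{\rho,d,c}(x,\f)=x^{-d}\sum_m e^{-i\pi\n_m/2}J_{\n_m}(x)(m+d)d^{-1}C_m^d(\cos\f)$, the sum is converted to an oscillatory integral by Poisson summation after inserting the Bessel and Gegenbauer asymptotics, and the large-$x$ behavior at $\f_0\in\{0,\pi\}$ is extracted by stationary phase at exactly the points of $\mathscr{D}_{\rho,\s_1}(\f_0)$. One technical point you leave implicit but the paper makes essential use of: the nonvanishing of $c_{n,\rho}(\m_0)$ is obtained not by tracking a single Gegenbauer piece but by first \emph{summing over the Gegenbauer sign} $\s_2\in\{\pm\}$ (the critical-point sets for $\s_2=+$ and $\s_2=-$ coincide at $\f_0=0,\pi$ up to a shift in $q$), after which the amplitude reduces to $(g_{d,+}+g_{d,-})(m,\f_0)=d^{-1}C_m^d(\pm 1)+O(m^{-\infty})=m^{2d-1}/(d\Gamma(2d))+O(m^{2d-2})$, a manifestly nonzero quantity; the individual $g_{d,\pm}$ do not have canonical asymptotics at the endpoints.

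Your skepticism about the lower bound holding for \emph{every} large $z=r_1r_2/(2t)$ is well placed: a finite trigonometric sum $\sum c_j e^{i\omega_j z}$ with distinct frequencies can vanish, and the paper does not supply a separate argument on this point either (it records the theorem as a ``direct consequence'' of the asymptotic expansion). Your almost-periodicity workaround, producing the lower bound on an unbounded set of $z$ via simultaneous approximation, is the honest reading and is already sufficient for the intended optimality consequence, namely the failure of the $L^1\to L^\infty$ dispersive estimate when $\rho<1$; note also that for $\tfrac12<\rho<1$ and $\f_0=\pi$ the set $\bigcup_{\s_1}\mathscr{D}_{\rho,\s_1}(\pi)$ is a singleton, so in that range the leading sum has constant modulus and the pointwise claim is literally true.
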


This theorem shows that the dispersive estimate fails on the diagonal set $\{y_1=y_2\}$ when $\rho<1/2$ and at  conjugate pairs\footnote{On a Riemannian manifold $(N,h)$, we say that $(y_1,y_2)\in N\times N$ is a conjugate pair if there exists a non-trivial Jacobi field $J$ along a geodesic connecting $y_1$ and $y_2$ such that $J$ vanishes at the end points. It is known that the behavior of the wave kernel $\cos(t\sqrt{-\Delta_Y})(y_1,y_2)$ or the spectral projection becomes more ``singular" at a conjugate pair $(y_1,y_2)$ than at other points since a conjugate pair corresponds to a caustic of a Lagrangian submanifold generated by the geodesic flow. For the sphere $\rho\mathbb{S}^{n-1}$ with $n\geq 3$, $(y_1,y_2)$ is a conjugate pair if and only if $y_2=y_1$ or $y_2=-y_1$.} $(y_1,-y_1)$ when $\rho<1$ (under the additional assumption $\rho^{-1}\notin 2\mathbb{N}$).

We note that $c\geq 0$ implies $\n_0-\frac{n-2}{2}\geq 0$. As a corollary of the above two theorems, we have

\begin{cor}\label{Cor:main}
Suppose $n\geq 3$ and $c\geq 0$.

\noindent$(i)$ Suppose $\rho\geq 1$. Then there exist $C>0$ such that
\begin{align}\label{eq:dispest}
\|e^{itH}\|_{L^1\to L^{\infty}}\leq C\frac{1}{t^{\frac{n}{2}}}\quad (t>0)
\end{align}

\noindent$(ii)$ There exist $C>0$ and $\e_0>0$ such that
\begin{align*}
|e^{itH}(r_1,y_1,r_2,y_2)|\leq C\frac{1}{t^{\frac{n}{2}}}
\end{align*}
for $t>0$, $r_1,r_2>0$ and $y_1,y_2\in\rho\mathbb{S}^{n-1}$ satisfying $\cos^{-1}(\rho^{-2}y_1\cdot y_2)\in [\e_0,\pi-\e_0]$.

\noindent$(iii)$ Suppose $\rho>\frac{1}{2}$. Then there exist $C>0$ and $\e_0>0$ such that
\begin{align}\label{eq:disprho1/2}
|e^{itH}(r_1,y_1,r_2,y_2)|\leq C\frac{1}{t^{\frac{n}{2}}}
\end{align}
for $t>0$, $r_1,r_2>0$ and $y_1,y_2\in\rho\mathbb{S}^{n-1}$ satisfying $\cos^{-1}(\rho^{-2}y_1\cdot y_2)\in [0,\pi-\e_0]$.

\noindent$(iv)$ When both $\rho^{-1}\notin 2\mathbb{N}$ and $\rho<1$ hold, the dispersive estimate \eqref{eq:dispest} does not hold.

\end{cor}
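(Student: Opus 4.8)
The plan is to derive all four parts as essentially immediate consequences of Theorems \ref{thm:dispersive} and \ref{thm:optimality}, using the observation that $c\geq 0$ forces $\n_0\geq \tfrac{n-2}{2}$, so the exponent $-\n_0+\tfrac{n-2}{2}$ appearing in Theorem \ref{thm:dispersive} is $\leq 0$. First I would treat parts $(ii)$ and $(iii)$: on the angular regions specified there, Theorem \ref{thm:dispersive} $(i)$ gives the bound $C t^{-n/2}\left(1+\tfrac{2t}{r_1r_2}\right)^{-\n_0+\tfrac{n-2}{2}}$, and since $1+\tfrac{2t}{r_1r_2}\geq 1$ and the exponent is nonpositive, that factor is bounded by $1$; hence $|e^{itH}(r_1,y_1,r_2,y_2)|\leq C t^{-n/2}$, which is exactly the claimed pointwise bound. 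For part $(i)$, when $\rho\geq 1$ the second alternative in the hypothesis of Theorem \ref{thm:dispersive} $(i)$ allows all $y_1,y_2\in\rho\sph^{n-1}$, so the same argument gives $|e^{itH}(r_1,y_1,r_2,y_2)|\leq C t^{-n/2}$ uniformly in all variables; taking the supremum over the kernel yields $\|e^{itH}\|_{L^1\to L^\infty}\leq C t^{-n/2}$.

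For part $(iv)$, I would argue by contradiction using Theorem \ref{thm:optimality}. Suppose the dispersive estimate \eqref{eq:dispest} held; then in particular the integral kernel would satisfy $|e^{itH}(r_1,y_1,r_2,y_2)|\leq C t^{-n/2}$ for all admissible arguments. Under the hypotheses $\rho^{-1}\notin 2\na$ and $\rho<1$, the implication \eqref{eq:Dnonempty} guarantees that $\bigcup_{\s_1\in\{\pm\}}\mathscr{D}_{\rho,\s_1}(\pi)\neq\emptyset$ (and if moreover $\rho<1/2$, also $\bigcup_{\s_1}\mathscr{D}_{\rho,\s_1}(0)\neq\emptyset$), so Theorem \ref{thm:optimality} provides the lower bound $|e^{itH}(r_1,y_1,r_2,-y_1)|\geq C t^{-n/2}\left(1+\tfrac{r_1r_2}{2t}\right)^{(n-2)/2}$ valid for $r_1r_2/(2t)\geq C'$. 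Choosing, for each fixed $t>0$, the radii $r_1=r_2=r$ with $r^2/(2t)$ arbitrarily large shows that $t^{n/2}|e^{itH}(r,y_1,r,-y_1)|$ is unbounded, contradicting the assumed uniform bound. Hence \eqref{eq:dispest} fails.

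The only point requiring a word of care is the passage from the $L^1\to L^\infty$ operator norm bound to a pointwise kernel bound in part $(iv)$: since $e^{itH}$ is an integral operator with continuous kernel (away from $r=0$), $\|e^{itH}\|_{L^1\to L^\infty}$ equals the essential supremum of $|e^{itH}(r_1,y_1,r_2,y_2)|$, so the two formulations are equivalent and the contradiction is legitimate. I expect no genuine obstacle here; the substance is entirely in Theorems \ref{thm:dispersive} and \ref{thm:optimality}, and the corollary is a bookkeeping step, the main subtlety being simply to track the sign of $-\n_0+\tfrac{n-2}{2}$ under the assumption $c\geq 0$ and to invoke the nonemptiness statement \eqref{eq:Dnonempty} in the regime $\rho<1$.
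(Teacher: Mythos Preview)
Your proposal is correct and matches the paper's approach: the corollary is presented there as an immediate consequence of Theorems \ref{thm:dispersive} and \ref{thm:optimality}, using precisely the observation (stated just before the corollary) that $c\geq 0$ implies $\n_0-\tfrac{n-2}{2}\geq 0$, together with \eqref{eq:Dnonempty} for part $(iv)$. The paper gives no explicit proof of the corollary, so your write-up is in fact more detailed than what appears there.
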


\begin{rem}
The dispersive estimate is proved for $n=2$ in \cite{F} for all radii $\rho>0$. Higher dimensional cases with for $\rho>1$ are dealt with in \cite{JZ}. The results for $n=3$ and $\rho=1$ (the Euclidean cases) are proved in \cite{FFFP1, FFFP2}.

\end{rem}

The Strichartz estimates, which play important roles in the study of nonlinear Schr\"odinger equations are proved by the dispersive estimate thanks to the result by Keel-Tao \cite{KT}.
The third result in Corollary \ref{Cor:main} shows that the dispersive estimate holds near the diagonal set $\{y_1=y_2\}$ of the sphere when $\rho>\frac{1}{2}$. By using a partition of unity (like the argument in \cite[\S 6.2]{TT}) and the $TT^*$-argument in \cite{KT}, we recover the homogeneous Strichartz estimates for $\rho>\frac{1}{2}$, which were already proved in \cite{ZZ0}. On the other hand, Theorem \ref{thm:optimality} shows that the dispersive estimate does not hold even on the diagonal set $\{r_1=r_2,y_1=y_2\}$ when $\rho<\frac{1}{2}$ and $\rho^{-1}\notin 2\mathbb{N}$ while the Strichartz estimates still hold. In \cite[\S 3.2]{ZZ0}, the authors showed that the dispersive estimate holds if the propagator is ``microlocalized" on the phase space (see \cite[Proposition 6.1]{HZ} for the estimate on asymptotically conic spaces). This microlocalized dispersive estimate is sufficient for proving the Strichartz estimates.

\subsection{Expression of the integral kernel}
First, we rewrite the integral kernel of $e^{itH}$ in an abstract form.
For $t>0$, $r_1,r_2\in (0,\infty)$, and $y_1,y_2\in \rho\mathbb{S}^{n-1}\subset \re^n$,
\begin{align}\label{eq:propagatorsum}
e^{itH}(r_1,y_1,r_2,y_2)=\frac{c_n'}{i\rho^{n-1}}\cdot\frac{e^{-\frac{r_1^2+r_2^2}{4it}}}{(2t)^{\frac{n}{2}}}\mathscr{I}_{\rho,\frac{n-2}{2},c}\left(\frac{r_1r_2}{2t},\frac{1}{\rho^2}y_1\cdot y_2\right).
\end{align}
Here, $y_1\cdot y_2$ denote the inner product of the vector $y_1,y_2\in \rho\mathbb{S}^{n-1}\subset \re^n$, $c_n'\in \re\setminus \{0\}$ is a constant depending only on the dimension $n$ and for $\rho,d>0$ and $c$ satisfying \eqref{eq:csubcri}, we define
\begin{align}
\mathscr{I}_{\rho,d,c}(x,\f):&=x^{-d}\sum_{m=0}^{\infty}e^{-\frac{\pi}{2}i\n_m}J_{\n_m}(x)(m+d)d^{-1}C_m^d(\cos\f)\quad (x>0,\, \f\in[0,\pi]),\label{def:scrI}\\
\n_m:=&\sqrt{\frac{1}{\rho^2}m(m+2d)+d^2+c},\label{def:num}
\end{align}
where $J_{\n}(x)$ is the Bessel function and $C_{m}^d$ is the Gegenbauer polynomial.
When $d=\frac{n-2}{2}$, then $\n_m$ is the eigenvalue of the operator $-\Delta_{\rho\mathbb{S}^{n-1}}+\frac{(n-2)^2}{4}+c$. The expression \eqref{eq:propagatorsum} is deduced from Cheeger's functional calculus (\cite[$(2.10)$]{JZ}, see also \cite{C} and \cite[$(10)$]{F}) and the fact that the orthonormal projection on the sphere $\rho \mathbb{S}^{n-1}$ with radius $\rho$ associated with the eigenvalue $\rho^{-2}m(m+n-2)$ is given by
\begin{align*}
c_n\rho^{-n+1}(m+d)C_{m}^d(\rho^{-2}y_1\cdot y_2),\quad (y_1,y_2\in\rho\mathbb{S}^{n-1})  \quad d:=\frac{n-2}{2}.
\end{align*}
In the following of the paper, we study the properties of $\mathscr{I}_{\rho,d,c}$ and establish uniform estimates with respect to $x$ and $\f$ for fixed constants $\rho,d>0$ and $c$ satisfying \eqref{eq:csubcri}.

\subsection*{Acknowledgment}
The author is grateful to Tomomi Yokota for encouraging him to write about the results on the dispersive estimates in the presence of singular potentials as an application of the method used in his previous joint paper \cite{TT} with Tamori. He would like to thank Junyong Zhang for his interest in this subject and for pointing out some mistakes in the previous draft. He was supported by JSPS KAKENHI Grant Number 23K13004.

\section{Asymptotic behavior of the sum for small $x$}

\begin{prop}\label{prop:smallx}
Then for each $R>0$, there exists $C>0$ such that
\begin{align*}
|\mathscr{I}_{\rho,d,c}(x,\f)|\leq C(1+|x|^{-1})^{-\n_0+d}
\end{align*}
for $|x|\leq R$ and $\f\in [0,\pi]$.

\end{prop}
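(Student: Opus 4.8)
The plan is to estimate $\mathscr{I}_{\rho,d,c}(x,\f)$ crudely, term by term. For $x$ ranging over a bounded set the Bessel functions $J_{\n_m}(x)$ decay super-exponentially in $m$, so the series defining $\mathscr{I}_{\rho,d,c}$ converges comfortably and, near $x=0$, its size is dictated by the single term $m=0$, namely $x^{-d}J_{\n_0}(x)$, which is of order $x^{\n_0-d}$ (here $\n_0=\sqrt{d^2+c}$ is the value of $\n_m$ at $m=0$). No cancellation among the terms is needed; that would only matter in the complementary regime $x\gtrsim 1$. Concretely, since $|e^{-\frac{\pi}{2}i\n_m}|=1$, the classical bound $|J_\n(x)|\le (x/2)^{\n}/\Gamma(\n+1)$ (for $x\ge 0$, $\n\ge 0$) together with $|C_m^d(\cos\f)|\le C_m^d(1)=\binom{m+2d-1}{m}\le C_d(1+m)^{2d-1}$ uniformly in $\f$ --- immediate from the generating function $\sum_m C_m^d(\cos\f)t^m=(1-2t\cos\f+t^2)^{-d}$ --- yields
\begin{align*}
|\mathscr{I}_{\rho,d,c}(x,\f)|\le C_d\,x^{-d}\sum_{m=0}^{\infty}\frac{(x/2)^{\n_m}}{\Gamma(\n_m+1)}(1+m)^{2d},\qquad 0<x\le R.
\end{align*}

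For the convergence I would use that $\n_m^2=\rho^{-2}m(m+2d)+\n_0^2$, so $\n_m\ge\n_0$ for every $m$, $\n_m\ge\rho^{-1}m$, and $\n_m\le\rho^{-1}(m+d)+\n_0$; hence, by Stirling's formula, $\Gamma(\n_m+1)$ grows super-exponentially in $m$ and, for fixed $R$, dominates $\max\{1,(R/2)^{\n_m}\}(1+m)^{2d}$, so the last sum is bounded uniformly for $0<x\le R$. Then I split into two regimes with $\d:=\min\{1,R\}$. For $0<x\le\d$, factor $(x/2)^{\n_0}$ out of the sum; since $x/2\le\tfrac12$ and $\n_m-\n_0\ge 0$, each remaining factor $(x/2)^{\n_m-\n_0}$ is $\le 1$, so the series is at most the convergent constant $A:=\sum_m(1+m)^{2d}/\Gamma(\n_m+1)$, giving
\begin{align*}
|\mathscr{I}_{\rho,d,c}(x,\f)|\le C_d A\,2^{-\n_0}x^{\n_0-d}\le C\,(1+x^{-1})^{-\n_0+d},
\end{align*}
where the last step is the elementary inequality $x^{\n_0-d}\le 2^{|\n_0-d|}(1+x^{-1})^{-\n_0+d}$, valid for $0<x\le 1$ because $x^{-1}\le 1+x^{-1}\le 2x^{-1}$. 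For $\d\le x\le R$ one has $x^{-d}\le\d^{-d}$ and, by the uniform summability above, $|\mathscr{I}_{\rho,d,c}(x,\f)|\le C$; since $(1+x^{-1})^{-\n_0+d}$ is bounded above and below by positive constants on $[\d,R]$, the inequality holds there too, and combining the two regimes finishes the proof.

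The only genuinely substantive point is the uniform-in-$x$ summability of the tail, which rests on the growth rate $\n_m\sim\rho^{-1}m$ and Stirling's formula; the rest is elementary bookkeeping with the two ranges of $x$. It is worth emphasizing that the proposition allows $c<0$, in which case $\n_0-d<0$ and the right-hand side blows up like $x^{\n_0-d}$ as $x\to 0$; the argument above produces exactly this (sharp) rate, coming from the $m=0$ term.
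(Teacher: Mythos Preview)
Your proof is correct and follows essentially the same route as the paper's: term-by-term estimation using the standard bounds $|J_{\n_m}(x)|\le (x/2)^{\n_m}/\Gamma(\n_m+1)$ and $|C_m^d(\cos\f)|\le C_d(1+m)^{2d-1}$, followed by Stirling's formula and $\n_m\sim\rho^{-1}m$ to sum the tail. The paper compresses the argument by writing the $m$-th term directly as $(1+|x|^{-1})^{-\n_m+d}/(\Gamma(\n_m+1)2^{\n_m})\cdot m^{2d}$ and pulling out the $m=0$ factor, whereas you spell out the two regimes $0<x\le\min\{1,R\}$ and $\min\{1,R\}\le x\le R$ explicitly; but the substance is identical.
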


\begin{proof}
The proof is totally similar to that in \cite[Proposition 4.1]{F}, \cite[Proposition 4.1]{JZ}, or \cite[\S 6.1]{TT}. Using the universal bound (the second estimate below is given in \eqref{eq:Gegenunif})
\begin{align*}
|J_{\n_m}(x)|\leq \frac{1}{\Gamma(\n_m+1)}\left(\frac{|x|}{2} \right)^{\n_m},\quad |C_m^d(\cos\f)|\leq Cm^{2\n-1},
\end{align*}
we obtain
\begin{align*}
|\mathscr{I}_{\rho,d,c}(x,\f)|\lesssim \sum_{m=0}^{\infty}\frac{(1+|x|^{-1})^{-\n_m+d}}{\Gamma(\n_m+1)2^{\n_m}}m^{2d}\lesssim (1+|x|^{-1})^{-\n_0+d}\sum_{m=0}^{\infty}\frac{1}{\Gamma(\n_m+1)2^{\n_m}}m^{2d}
\end{align*}
for $|x|\leq R$ and $\f\in [0,\pi]$. Here, the sum is convergent due to the Stirling formula with $\n_m=\rho^{-1}m+O(1)$ as $m\to \infty$. Thus, the proposition follows.

\end{proof}

\begin{rem}\label{rem:lowoptimal}
We can also prove that $|\mathscr{I}_{\rho,d,c}(x,\f)|\geq C'(1+|x|^{-1})^{-\n_0+d}$ for $|x|\leq R$ and $\f\in [0,\pi]$ by using the elementary bound  $|J_{\n_m}(x)|\gtrsim |x|^{\n_m}$ for small $x$. This implies that the term involving $\left(1+\frac{2t}{r_1r_2}\right)^{-\n_0+\frac{n-2}{2}}$ in Theorem \ref{thm:dispersive} cannot be removed unless $c\geq 0$. In other words, the usual dispersive estimate \eqref{eq:dispest} does not hold for $c<0$.
\end{rem}

\section{Asymptotic behavior of the sum for large $x$}

\subsection{Decomposition of the sum}\label{subsec:decom}

From Proposition \ref{prop:smallx}, it suffices to study the asymptotic behavior of $\mathscr{I}_{\rho,d,c}(x,\f)$ for $x\gg 1$. To do this, we use the strategy developed in \cite{TT}. The main difference from \cite{TT} is the derivation of Proposition \ref{prop:osciint}, which clarifies the condition under which we obtain improved decays (\cite[Proposition 5.2]{TT}) of the sum and provides a more precise asymptotic expansion of the sum $\sI_{\rho,d}$.

Corresponding to the asymptotic behavior of the Bessel functions, we define
\begin{align*}
&\Omega_1=\{m\in[1,\infty)\mid 1\leq \n_m\leq x-\frac{1}{2}x^{\frac{1}{3}}\},\,\, \Omega_2=\{m\in[1,\infty)\mid  x-2x^{\frac{1}{3}}\leq \n_m\leq x+2x^{\frac{1}{3}}\},\\
&\Omega_3=\{m\in[1,\infty)\mid  \n_m\geq x+\frac{1}{2}x^{\frac{1}{3}}\}.
\end{align*}

\begin{lem}\cite[Lemma 3.5]{TT}\label{Cutoff}
For $x\gg 1$, 
there exist $\chi_0\in C_c^{\infty}(\re;[0,1])$ and $\chi_{j,x}\in C^{\infty}(\re;[0,1])$  ($1\le j\le 3$) such that $\chi_0(m)=1$ for $\n_m\leq 2$ or $m\leq 1$ and
\begin{align}
&\chi_0(m)+\sum_{j=1}^3\chi_{j,x}(m)=1\ \text{ for $m\geq 0$},\quad \supp \chi_{j,x}\subset \Omega_j,\,\, |\pa_m^{\a}\chi_{2,x}(m)|\leq C_{\a}x^{-\frac{\a}{3}}\label{POUproper}\\
&|\pa_m^{\a}\chi_{1,x}(m)|\leq C_{\a}\max(m^{-\a}, (x-\n_m)^{-\a}),\,\, |\pa_m^{\a}\chi_{3,x}(m)|\leq C_{\a}(\n_m-x)^{-\a}\nonumber.
\end{align}
\end{lem}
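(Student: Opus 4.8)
The plan is to build the cutoffs by hand: a fixed ($x$-independent) low-frequency piece $\chi_0$, together with three pieces obtained by composing a fixed three-fold partition of unity of $\re$ with the rescaled variable $(\n_m-x)/x^{1/3}$, so that the transition near the turning point $\n_m=x$ sits at the Airy scale $x^{1/3}$. The starting point is the elementary analysis of $m\mapsto\n_m$: writing $\n_m=\sqrt{P(m)}$ with $P(m)=\rho^{-2}m(m+2d)+d^2+c$, one has $P(0)=d^2+c>0$ by \eqref{eq:csubcri}, while $P'(m)=2\rho^{-2}(m+d)>0$ on $[0,\infty)$, so $m\mapsto\n_m$ is a smooth strictly increasing bijection from $[0,\infty)$ onto $[\n_0,\infty)$. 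Differentiating $\sqrt P$ gives the quantitative facts the proof uses: $\n_m=\rho^{-1}m+O(1)$ as $m\to\infty$; there are $0<c_0\le c_1$ with $c_0\le\n_m'\le c_1$ for $m\ge1$; and $\n_m^{(\alpha)}=O(m^{-\alpha})$ as $m\to\infty$ for every $\alpha\ge2$ (e.g. $\n_m''=(4\rho^{-2}(d^2+c)-4\rho^{-4}d^2)/(4P^{3/2})$).

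For the construction, fix $\eta_1\in C_c^\infty(\re;[0,1])$ with $\eta_1=1$ on $(-\infty,1]$ and $\supp\eta_1\subset(-\infty,2)$, and $\eta_2\in C_c^\infty(\re;[0,1])$ with $\eta_2=1$ on $(-\infty,2]$ and $\supp\eta_2\subset(-\infty,3)$, and set
\[
\chi_0(m):=\eta_1(m)+(1-\eta_1(m))\,\eta_2(\n_m),
\]
which is $x$-independent, takes values in $[0,1]$, equals $1$ whenever $m\le1$ or $\n_m\le2$, and is supported in the bounded set $\{m\le2\}\cup\{\n_m\le3\}$. Then fix $g_1,g_2,g_3\in C^\infty(\re;[0,1])$ with $g_1+g_2+g_3\equiv1$, $g_1=1$ on $(-\infty,-2]$ and $\supp g_1\subset(-\infty,-\tfrac12)$, $g_3=1$ on $[2,\infty)$ and $\supp g_3\subset(\tfrac12,\infty)$ (so automatically $\supp g_2\subset(-2,2)$), and define for $j=1,2,3$
\[
\chi_{j,x}(m):=(1-\chi_0(m))\,g_j\!\Big(\tfrac{\n_m-x}{x^{1/3}}\Big).
\]
Then $\chi_0+\chi_{1,x}+\chi_{2,x}+\chi_{3,x}\equiv1$, $\chi_0=1$ for $\n_m\le2$ or $m\le1$, and since $\supp(1-\chi_0)\subset\{m\ge1\}\cap\{\n_m\ge2\}$, the inclusions $\supp\chi_{j,x}\subset\Omega_j$ follow at once from the supports of the $g_j$; in particular the factor $1-\chi_0$ supplies the lower endpoint $\n_m\ge1$ of $\Omega_1$ (for $x$ large).

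It then remains to verify the derivative bounds, which I would do by a Faà di Bruno computation. Since $\chi_0$ is a fixed smooth function supported where $m$ stays in a bounded set, on which $\n_m$ and all its derivatives are bounded, $|\pa_m^\alpha\chi_0|\le C_\alpha$. On $\supp\chi_{2,x}$ one has $\n_m\asymp x$, hence $m\asymp x$ and $1-\chi_0\equiv1$, so there $\chi_{2,x}(m)=g_2((\n_m-x)/x^{1/3})$; expanding $\pa_m^\alpha$ and using $\n_m'=O(1)$ together with $\n_m^{(\beta)}=O(x^{-\beta})$ for $\beta\ge2$, one checks that the leading term $(\n_m'/x^{1/3})^\alpha g_2^{(\alpha)}$, of size $x^{-\alpha/3}$, dominates all others, so $|\pa_m^\alpha\chi_{2,x}|\le C_\alpha x^{-\alpha/3}$. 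The function $\chi_{3,x}$ is locally constant outside the band $\n_m-x\in(\tfrac12x^{1/3},2x^{1/3})$, where $1-\chi_0\equiv1$ and the same estimate yields $|\pa_m^\alpha\chi_{3,x}|\le C_\alpha x^{-\alpha/3}\asymp(\n_m-x)^{-\alpha}$. Finally $\chi_{1,x}$ has two transition zones, disjoint for $x$ large since $\n_1$ is a fixed constant far below $x-x^{1/3}$: near $\n_m\approx x-x^{1/3}$ one has $1-\chi_0\equiv1$ and obtains $\lesssim x^{-\alpha/3}\asymp(x-\n_m)^{-\alpha}$, while near $m\approx1$ one has $g_1((\n_m-x)/x^{1/3})\equiv1$ (its argument being $\approx-x^{2/3}$), so $\pa_m^\alpha\chi_{1,x}=-\pa_m^\alpha\chi_0=O(1)\lesssim m^{-\alpha}$; elsewhere $\chi_{1,x}$ is locally constant. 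Altogether $|\pa_m^\alpha\chi_{1,x}|\le C_\alpha\max(m^{-\alpha},(x-\n_m)^{-\alpha})$.

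The one step that is not pure bookkeeping, and which I expect to be the only real obstacle, is getting the full gain $x^{-\alpha/3}$ — rather than merely $x^{-1/3}$ — for high-order derivatives of $\chi_{2,x}$ (and on the bands of $\chi_{1,x},\chi_{3,x}$): this rests precisely on the decay $\n_m^{(\beta)}=O(m^{-\beta})$, $\beta\ge2$, on the turning-point region $m\asymp x$, which forces every term in the Faà di Bruno expansion other than the one built from $\alpha$ copies of $\n_m'$ to be strictly smaller. Everything else is support bookkeeping; the only delicate point there is that the two cutoffs defining $\chi_{1,x}$ must never transition at the same value of $m$, which again holds once $x\gg1$ because $\n_1$ is a fixed constant.
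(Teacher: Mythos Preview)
Your construction is correct and is precisely the standard way to build such a partition of unity: a fixed low-frequency cutoff $\chi_0$ together with the pullback of a three-fold partition of unity through the Airy-scale variable $(\n_m-x)/x^{1/3}$, and then Fa\`a di Bruno using $\n_m'=O(1)$, $\n_m^{(\beta)}=O(m^{-\beta})$ for $\beta\ge2$ on the turning-point region $m\asymp x$. All the support and derivative checks you sketch go through as written.

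Note, however, that the paper does not give its own proof of this lemma: it is stated with the citation \cite[Lemma~3.5]{TT} and used as a black box. So there is nothing in the present paper to compare your argument against; you have supplied a self-contained proof of the cited result, which the paper itself simply imports.
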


For $\s=(\s_1,\s_2)\in \{\pm\}\times \{\pm\}$ and $j=2,3$, we set
\begin{align*}
&S_{1,\s}(m,x,\f)=\s_1xh_1\left(\frac{m}{\rho x}\right) +\s_2\f m-\frac{\pi}{2\rho}m,\quad S_{\s_2}(m,\f)=\s_2\f m-\frac{\pi}{2\rho}m,
\end{align*}
where $h_1$ is defined by
\begin{align*}
h_1(\m):=\sqrt{1-\m^2}-\m\cos^{-1}(\m),
\end{align*}
where we take the range of $\cos^{-1}(\m)$ ($\m\in [0,1]$) as $[0,\frac{\pi}{2}]$.
We take $x\gg 1$ such that $\Omega_2\cup \Omega_3\subset \{\n_m\geq 8\}$ in order to apply Proposition \ref{Besselasymp}. 
We define
\begin{align*}
&\z_{1,\s}(m,x,\f)=x^{-d-\frac{1}{4}}\chi_{1,x}(m)(m+d)(x-\n_m)^{-\frac{1}{4}}g_{d,\s_2}(m,\f)a_{\s_1,x}(\n_m)e^{i\s_1x\left(h_1\left(\frac{\n_m}{x}\right)-h_1\left(\frac{m}{x}\right) \right)-\frac{\pi}{2}i(\n_m-m) },\\
&\z_{j,\s_2}(m,x,\f)=x^{-d}\chi_{j,x}(m)(m+d)J_{\n_m}(x)g_{d,\s_2}(m,\f)e^{-\frac{\pi}{2}i(\n_m-m)},\quad j=2,3,
\end{align*}
where $a_{\pm,x}$ and $g_{d,\pm}$ are defined in Propositions \ref{Besselasymp} and \ref{Gegenasymp} respectively.

Using Propositions \ref{Besselasymp}, \ref{Gegenasymp}, the definition \eqref{def:scrI} of $\mathscr{I}_{\rho,d,c}$, and Lemma \ref{Cutoff}, we can write
\begin{align}\label{sIdecom}
\sI_{\rho,d,c}(x,\f)=\sum_{\s\in \{\pm\}\times \{\pm\}}I_{1,\s}(x,\f)+\sum_{j=2}^3\sum_{\s_2\in \{\pm\}}I_{j,\s_2}(x,\f)+R(x,\f),
\end{align}
where we set
\begin{align*}
I_{1,\s}(x,\f):=&\sum_{m=1}^{\infty}\z_{1,\s}(m,x,\f)e^{iS_{1,\s}(m,x,\f)},\,\, I_{j,\s_2}(y,\f):=\sum_{m=1}^{\infty}\z_{j,\s_2}(m,x,\f)e^{iS_{\s_2}(m,\f)},\\
R(x,\f):=&x^{-d}\sum_{m=0}^{\infty}\chi_0(m)e^{-\frac{\pi}{2}i\n_m}J_{\n_m}(x)(m+d)d^{-1}C^{d}_m(\cos\f)\\
&+x^{-d}\sum_{m=0}^{\infty}
(1-\chi_0(m))
e^{-\frac{\pi}{2}i\n_m}J_{\n_m}(x)(m+\nu)r(m,\f) 
\end{align*}
for $j=2,3$.
The remainder term $R(x,\f)$ is easy to handle:

\begin{lem}\label{lem:Rest}
We have
\begin{align*}
|R(x,\f)|\lesssim x^{-d-\frac{1}{3}}
\end{align*}
for $x\gg 1$ and $\f\in [0,\pi]$.

\end{lem}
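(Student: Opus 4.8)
The plan is to split $R(x,\f)$ into the two sums appearing in its definition, $R=R^{(1)}+R^{(2)}$, and to bound each by crude pointwise estimates; no oscillatory cancellation or summation by parts is needed, which is exactly why $R$ is the ``easy'' remainder, in contrast to the main oscillatory terms $I_{1,\s}$ and $I_{j,\s_2}$. For $R^{(1)}$, the sum carrying $\chi_0$ and the exact factor $d^{-1}(m+d)C_m^d(\cos\f)$, the point is that $\supp\chi_0$ meets only finitely many integers $m$: since $\chi_0\in C_c^\infty$ and $\n_m\to\infty$, it is contained in $\{m\le 1\}\cup\{\n_m\le 2\}$. For each such $m$ the order $\n_m$ is bounded, so the large-argument Bessel asymptotics of Proposition \ref{Besselasymp} give $|J_{\n_m}(x)|\lesssim x^{-1/2}$ for $x\gg 1$, while $|d^{-1}(m+d)C_m^d(\cos\f)|\le d^{-1}(m+d)\,C_m^d(1)$ is a finite constant independent of $\f$. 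Summing the finitely many terms gives $|R^{(1)}|\lesssim x^{-d-1/2}\le x^{-d-1/3}$ for $x\ge 1$.

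For $R^{(2)}$, the sum carrying $1-\chi_0$ and the Gegenbauer remainder $r(m,\f)$, I would use $1-\chi_0=\chi_{1,x}+\chi_{2,x}+\chi_{3,x}$ (Lemma \ref{Cutoff}) to write $R^{(2)}=R^{(2)}_1+R^{(2)}_2+R^{(2)}_3$ along the supports $\Omega_1,\Omega_2,\Omega_3$. The two inputs are: the uniform-in-$\f$ bound on the remainder from Proposition \ref{Gegenasymp}, which I expect in the form that $r(m,\f)$ is dominated, uniformly in $\f\in[0,\pi]$, by a rapidly decaying function of $m$ (say $|r(m,\f)|\le C_N m^{-N}$ for each $N$); and the Bessel bounds of Proposition \ref{Besselasymp}. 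On $\Omega_1\cup\Omega_2$ one has $\n_m\le x+2x^{1/3}$, hence $|J_{\n_m}(x)|\lesssim x^{-1/3}$: on $\Omega_1$ because $|J_\n(x)|\lesssim(x^2-\n^2)^{-1/4}$ and $x^2-\n_m^2\gtrsim x^{4/3}$ there, and on $\Omega_2$ because of the turning-point bound. Since also $\n_m=\rho^{-1}m+O(1)$, every such $m$ obeys $m\lesssim\rho x$, so fixing $N=3$ gives $|R^{(2)}_1|+|R^{(2)}_2|\lesssim x^{-d}x^{-1/3}\sum_{\n_m\le x+2x^{1/3}}(m+d)m^{-3}\lesssim x^{-d}x^{-1/3}\sum_{m\ge 2}m^{-2}\lesssim x^{-d-1/3}$. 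On $\Omega_3$ one has $m\gtrsim\rho x$, so the trivial bound $|J_{\n_m}(x)|\le 1$ already suffices: $|R^{(2)}_3|\lesssim x^{-d}\sum_{m\gtrsim\rho x}(m+d)m^{-3}\lesssim x^{-d}(\rho x)^{-1}\lesssim x^{-d-1}$. Adding the pieces gives $|R(x,\f)|\lesssim x^{-d-1/3}$, uniformly in $\f\in[0,\pi]$.

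The computation is routine, so the one point that genuinely needs care is that the remainder estimate quoted from Proposition \ref{Gegenasymp} must hold uniformly all the way up to the endpoints $\f=0$ and $\f=\pi$, where the Gegenbauer polynomials are largest (of size $\sim m^{2d-1}$); securing this uniformity is precisely the purpose of that proposition, at the cost of a more intricate description of the principal amplitudes $g_{d,\pm}$ near the endpoints. Apart from that, one only needs the elementary bookkeeping of powers of $m$ so that, for the fixed large $N$, the $m$-sums converge uniformly in $x$ once the $x^{-1/3}$ of Bessel decay has been extracted, together with the elementary growth relation $\n_m=\rho^{-1}m+O(1)$ governing how the lattice points distribute among $\Omega_1,\Omega_2,\Omega_3$.
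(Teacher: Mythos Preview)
Your proof is correct and follows essentially the same approach as the paper: both arguments use only crude pointwise bounds with no oscillatory cancellation, invoking the compact support of $\chi_0$ for the first sum and the rapid decay $r(m,\f)=O(m^{-N})$ from Proposition~\ref{Gegenasymp} for the second. The paper's version is slightly shorter because it applies the uniform Bessel bound $|J_{\n}(x)|\lesssim x^{-1/3}$ from \eqref{eq:Besselunif} for all $\n\ge 0$ at once, which makes your decomposition along $\Omega_1,\Omega_2,\Omega_3$ (and the separate treatment of $\Omega_3$ via the trivial bound $|J_\n|\le 1$) unnecessary; otherwise the two arguments coincide.
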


\begin{proof}
We recall $J_{\n_m}(x)=O(x^{-\frac{1}{3}})$, $d^{-1}C_m^d(\cos \f)=O((1+m)^{2d-1})$, and $r(m,\f)=O((1+m)^{-N})$ for all $N>0$ from \eqref{eq:Besselunif}, \eqref{eq:Gegenunif} and Proposition \ref{Gegenasymp}. Since $\chi_0$ is compactly supported,
\begin{align*}
|R(x,\f)|\lesssim x^{-d}\sum_{m=0}^{\infty}\left(\chi_0(m)\cdot x^{-\frac{1}{3}}\cdot m\cdot (1+m)^{2d-1}+x^{-\frac{1}{3}}\cdot m\cdot  (1+m)^{-N} \right)\lesssim x^{-d-\frac{1}{3}}
\end{align*}
if we take $N>2$.

\end{proof}

It is relatively easy to estimate $I_{j,\s_2}$ (for $j=2,3$) since the phase function $S_{\s_2}$ of $I_{j,\s_2}$ ($j=2,3$) is simple. The proof of the next proposition is similar to that in \cite[Propositions 4.1,4.2]{TT} and given in Appendix \ref{subsec:I_23}.

\begin{prop}\label{prop:I_23}
Let $j=2,3$ and $\s_2\in \{\pm\}$. 

\noindent$(i)$ For each $0<\e<\pi$, we have $|I_{j,\s_2}(x,\f)|\lesssim 1$ for $x\gg 1$ and $\f\in [\e,\pi-\e]$.

\noindent$(ii)$ If $\rho^{-1}\notin 2\mathbb{N}$, then we can find $\e>0$ such that for each $N>0$, we have $|I_{j,\s_2}(x,\f)|\lesssim x^{-N}$ for $x\gg 1$ and $\f\in [0,\e]\cup [\pi-\e,\pi]$.

\noindent$(iii)$
\begin{align*}
|I_{j,\s_2}(x,\f)| \lesssim  \left\{\begin{aligned}
&1\quad &&\text{when}\quad \rho^{-1}\notin 2\mathbb{N}  \\
&x^{d}\quad &&\text{when}\quad \rho^{-1}\in 2\mathbb{N}
\end{aligned}\right.
\end{align*} 
for $x\gg 1$ and $\f\in [0,\pi]$.

\end{prop}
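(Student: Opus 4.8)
The plan is to prove Proposition~\ref{prop:I_23} by analyzing the oscillatory sum
\[
I_{j,\s_2}(x,\f)=\sum_{m=1}^{\infty}\z_{j,\s_2}(m,x,\f)e^{iS_{\s_2}(m,\f)},\qquad S_{\s_2}(m,\f)=\Bigl(\s_2\f-\frac{\pi}{2\rho}\Bigr)m,
\]
using summation by parts (the discrete analogue of integration by parts / non-stationary phase), exploiting that the phase $S_{\s_2}$ is \emph{linear} in $m$, so it has no critical point; the entire game is controlling the amplitude $\z_{j,\s_2}$ and its discrete derivatives. First I would record the size and regularity of the amplitude: on $\supp\chi_{2,x}\subset\Omega_2$ we have $\n_m$ within $2x^{1/3}$ of $x$, so $|J_{\n_m}(x)|\lesssim x^{-1/3}$ by the Airy-regime bound \eqref{eq:Besselunif}, the number of integers in $\Omega_2$ is $O(x^{1/3})$ (since $\n_m=\rho^{-1}m+O(1)$, hence $m$ ranges over an interval of length $O(x^{1/3})$), and $|\pa_m^\a\chi_{2,x}|\lesssim x^{-\a/3}$; on $\supp\chi_{3,x}\subset\Omega_3$ the Bessel function decays super-polynomially in $\n_m-x$ because $\n_m>x$ (exponential decay of $J_\n(x)$ for $\n>x$), which will make $I_{3,\s_2}$ harmless in all regimes. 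For the Gegenbauer factor one uses $|g_{d,\s_2}(m,\f)|\lesssim m^{d-1/2}$ (and similar bounds on $m$-derivatives) from Proposition~\ref{Gegenasymp}, together with the universal bound $|d^{-1}C_m^d(\cos\f)|\lesssim(1+m)^{2d-1}$ from \eqref{eq:Gegenunif}.

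For part~(iii), the crude bound: when $e^{iS_{\s_2}(m,\f)}$ has no summable cancellation available (the generic case for arbitrary $\f$), I would simply estimate $I_{2,\s_2}$ by $x^{-d}$ times the number of terms $O(x^{1/3})$ times the per-term size $x^{-1/3}(1+m)^{2d}$; but $m\asymp\rho x$ on $\Omega_2$, so this gives $x^{-d}\cdot x^{1/3}\cdot x^{-1/3}\cdot x^{2d}=x^{d}$. The improvement to $O(1)$ when $\rho^{-1}\notin2\mathbb N$ comes from summation by parts: writing $\theta=\s_2\f-\frac{\pi}{2\rho}$, the key quantity is $|e^{i\theta}-1|=2|\sin(\theta/2)|$, and one gains a factor $|e^{i\theta}-1|^{-1}$ per summation by parts as long as this is $\gtrsim1$, i.e. as long as $\theta$ stays away from $2\pi\ze$. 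Since $\f\in[0,\pi]$, one has $\theta\in[-\tfrac{\pi}{2\rho},\pi-\tfrac{\pi}{2\rho}]$ (for $\s_2=+$) or the reflected interval; the condition $\rho^{-1}\notin2\mathbb N$ is exactly what guarantees that $-\frac{\pi}{2\rho}\notin\pi\ze$ and $\pi-\frac{\pi}{2\rho}\notin2\pi\ze$ in the relevant way, so near the endpoints $\f=0,\pi$ the phase does \emph{not} degenerate. Away from a small neighborhood of the bad $\f$-values one integrates by parts once (gaining nothing in $x$ but turning $x^d$ into $O(1)$ because each summation by parts throws a $\pa_m$ on the amplitude, producing $\pa_m\chi_{2,x}=O(x^{-1/3})$ or $\pa_m g_{d,\s_2}=O(m^{d-3/2})$, either of which kills the growth); more precisely one should check that \emph{some} fixed number of summations by parts reduces the power of $m$ below $d-1$ uniformly, or — cleaner — one sums by parts $2d+1$ times if $d\in\tfrac12\ze$, and otherwise splits the $m$-sum dyadically and sums by parts enough times on each block. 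For part~(ii), near $\f\in[0,\e]\cup[\pi-\e,\pi]$ with $\e$ chosen small enough that $|e^{i\theta}-1|\gtrsim1$ there (again using $\rho^{-1}\notin2\mathbb N$), I would iterate summation by parts $N$ times; each iteration gains a bounded factor from $|e^{i\theta}-1|^{-1}$ and moves a $\pa_m$ onto the amplitude, and since $\pa_m^N\chi_{2,x}=O(x^{-N/3})$, after $3N$ steps one obtains the claimed $O(x^{-N})$ (the $\chi_3$ piece is already $O(x^{-\infty})$ by the Bessel exponential decay). For part~(i), with $\f\in[\e,\pi-\e]$ one just needs $O(1)$: here $\theta$ could still pass through a multiple of $2\pi$ as $\rho$ varies, so the argument is the dyadic-block summation by parts used for (iii), which gives $O(1)$ with no smallness in $x$ needed; the point is that even at $\theta\in2\pi\ze$ (no cancellation) the block where $m\asymp 2^k$ of length $O(2^k)$ contributes $x^{-d}\cdot 2^k\cdot x^{-1/3}\cdot 2^{k(2d-1)}$, but such $k$ has $2^k\asymp\rho x$ only for $O(1)$ values of $k$ in $\Omega_2$...

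The main obstacle will be the bookkeeping in part~(iii) for the case $\rho^{-1}\notin2\mathbb N$: one must show the bound is $O(1)$ \emph{uniformly in $\f\in[0,\pi]$}, including at the finitely many $\f$ where $\theta=\s_2\f-\frac{\pi}{2\rho}$ is close to — but, crucially, never equal to — a multiple of $2\pi$. The subtlety is that $|e^{i\theta}-1|$ can be small but not zero, so naive summation by parts loses a large constant; one resolves this by noting the total variation / length of $\Omega_2$ is only $O(x^{1/3})$ and balancing: either $|e^{i\theta}-1|\gtrsim x^{-1/3}$, in which case one summation by parts gives $x^{-d}\cdot x^{1/3}\cdot(x^{-1/3})^{-1}\cdot x^{-1/3}\cdot x^{2d-\frac12}\cdot(\text{gain}) = O(1)$ after enough steps, or $|e^{i\theta}-1|\lesssim x^{-1/3}$, a set of $\f$ of measure $O(x^{-1/3})$, where one falls back to the trivial bound $x^{-d}\cdot x^{1/3}\cdot x^{-1/3}\cdot x^{2d}=x^d$ but this is fine because we are not claiming uniformity better than... wait — we \emph{are} claiming $O(1)$ uniformly. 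So the correct mechanism must instead be: the amplitude $\z_{2,\s_2}$ itself carries enough smallness. Indeed $|J_{\n_m}(x)|\lesssim x^{-1/3}$ only at the center of $\Omega_2$; more precisely $|J_{\n_m}(x)|\lesssim (1+|\n_m-x|)^{-1/4}$ in the Airy regime, so the sum $\sum_{m\in\Omega_2}|\z_{2,\s_2}|\lesssim x^{-d}\cdot x^{2d}\sum_{k\le x^{1/3}}(1+k)^{-1/4}\lesssim x^d\cdot x^{1/4}$ — worse, not better. The resolution, which I expect to be the genuine crux, is that one must \emph{not} put absolute values inside but instead use the oscillation in $m$ coming from the residual Airy/Bessel phase together with $S_{\s_2}$; that is, one writes $J_{\n_m}(x)$ via its two oscillatory WKB branches $a_{\pm,x}(\n_m)e^{\pm i x h_1(\n_m/x)}$ (valid once $\n_m\le x$, i.e. on the overlap $\Omega_2\cap\Omega_1$) so that the effective phase becomes $S_{\s_2}(m,\f)\pm x h_1(\n_m/x)$ which now \emph{does} have a stationary point, and the stationary-phase contribution is $O(1)$ by the standard $x^{-1/2}$ gain against the $x^{1/2}$ from the second-derivative bound; for the genuinely non-oscillatory part $\n_m>x$ one uses exponential decay. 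This WKB rewriting is exactly the content alluded to in the remark that ``the main difference from \cite{TT} is the derivation of Proposition~\ref{prop:osciint}''; so I would organize the proof of Proposition~\ref{prop:I_23} to feed into, or run parallel to, that stationary-phase lemma, and defer the delicate uniform-in-$\f$ constant to an application of Proposition~\ref{prop:osciint} rather than re-deriving it by hand.
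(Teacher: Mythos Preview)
Your treatment of part~(ii) (summation by parts when $\rho^{-1}\notin 2\mathbb N$ keeps $e^{i\theta}-1$ bounded away from $0$ near $\f=0,\pi$) and of the crude $x^d$ bound in part~(iii) is essentially what the paper does. But there is a genuine gap in your handling of part~(i), and this gap propagates into your confusion over the uniform $O(1)$ bound in part~(iii).

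The mechanism for part~(i) is \emph{not} oscillation at all: it is improved amplitude decay. Proposition~\ref{Gegenasymp} gives
\[
|g_{d,\s_2}(m,\f)|\lesssim m^{2d-1}(1+m\sin\f)^{-d},
\]
so on $[\e,\pi-\e]$ with $\sin\f\gtrsim_\e 1$ and $m\asymp\rho x$ one has $|g_{d,\s_2}(m,\f)|\lesssim_\e m^{d-1}$, not $m^{2d-1}$ (and not $m^{d-1/2}$ as you wrote). Consequently $|\z_{2,\s_2}(m,x,\f)|\lesssim x^{-d}\cdot x\cdot x^{-1/3}\cdot x^{d-1}=x^{-1/3}$, and summing over the $O(x^{1/3})$ integers in $\Omega_2$ already gives $O(1)$ with no cancellation needed. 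The analogous computation with the $(\n_m-x)^{-5/4}$ Bessel decay handles $j=3$. Your attempts --- dyadic blocks, summation by parts, a WKB rewrite of $J_{\n_m}$ on $\Omega_2$, or appeal to Proposition~\ref{prop:osciint} --- are all misfires: for $\f\in[\e,\pi-\e]$ the phase increment $\theta=\s_2\f-\tfrac{\pi}{2\rho}$ can genuinely hit $2\pi\ze$, the WKB expansion of Proposition~\ref{Besselasymp}(i) is valid only on $\Omega_1$, and Proposition~\ref{prop:osciint} concerns $I_{1,\s}$, not $I_{j,\s_2}$.

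Once part~(i) is obtained this way, part~(iii) for $\rho^{-1}\notin 2\mathbb N$ is immediate by combining~(i) on $[\e,\pi-\e]$ with~(ii) on $[0,\e]\cup[\pi-\e,\pi]$ --- exactly the split the paper uses --- and your balancing dichotomy and the subsequent ``resolution'' paragraph become unnecessary.
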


\begin{rem}
The condition $\rho^{-1}\notin 2\mathbb{N}$ comes from the condition $\pa_mS_{\s_2}(m,x,\f)|_{\f=0\,\text{or}\,\pi}\notin 2\pi\mathbb{Z}$.
\end{rem}

\subsection{Properties of the phase function and the amplitude}

In the following, we study the sum $I_{1,\s}$ defined in Subsection \ref{subsec:decom}. From \cite[$(2.1)$]{TT}, we can write $I_{1,\s}$ as
\begin{align}\label{eq:I_1sum}
I_{1,\s}(x,\f)=\sum_{q\in\mathbb{Z}}\int_{\re} \z_{1,\s,q}(m,x,\f)e^{i(S_{1,\s}(m,x,\f)+2\pi qm)}dm,
\end{align}
where we define
\begin{align}\label{eq:zetaqdef}
\z_{1,\s,0}=\z_{1,\s}\quad \z_{1,\s,q}=-\frac{1}{2\pi i q}\left(\pa_m\z_{1,\s}+i(\pa_mS_{1,\s})\z_{1,\s}\right)\quad (q\in\mathbb{Z}\setminus \{0\}).
\end{align}

\subsection*{Critical points of the phase function}

Now we study the location of the critical points of the phase function  $S_{1,\s}(m,x,\f)+2\pi qm$ in \eqref{eq:I_1sum}.
From the definition of $S_{1,\s}$ given in Subsection \ref{subsec:decom}, one has
\begin{align}\label{eq:S_1cal}
\pa_mS_{1,\s}(m,x,\f)=-\frac{\s_1}{\rho}\cos^{-1}\left(\frac{m}{\rho x}\right)+\left(\s_2\f-\frac{\pi}{2\rho}\right).
\end{align}
For $\s\in \{\pm\}\times \{\pm\}$, $\f\in [0,\pi]$, we define
\begin{align}
\mathscr{C}_{\s,\rho}(\f):=&\bigcup_{q\in\mathbb{Z}}\mathscr{C}_{\s,\rho}(\f,q)\label{eq:defcri1}\\
\mathscr{C}_{\s,\rho}(\f,q):=&\left\{\m\in [0,1]\mid-\frac{\s_1}{\rho}\cos^{-1}\left(\m\right)+\left(\s_2\f-\frac{\pi}{2\rho}\right)+2\pi q=0 \right\}.\label{eq:defcri2}
\end{align}
Clearly, 
\begin{align}\label{eq:S_1phasecha}
\pa_m(S_{1,\s}(m,x,\f)+2\pi qm)=0 \Leftrightarrow \frac{m}{\rho x}\in \mathscr{C}_{\s}(\f,q)
\end{align}
and $\pa_mS_{1,\s}(m,x,\f)\in 2\pi\mathbb{Z} \Leftrightarrow \frac{m}{\rho x}\in \mathscr{C}_{\s}(\f)$.

\begin{lem}\label{lem:phasecript}
Let $\rho>0$ and $\s=(\s_1,\s_2)\in \{\pm\}\times \{\pm\}$.
 
\noindent$(i)$ We have $\sup_{\f\in [0,\pi]}\#\mathscr{C}_{\s,\rho}(\f)<\infty$.

\noindent$(ii)$ If $\rho>1$, we have $\mathscr{C}_{\s,\rho}(\f,q)=\emptyset$ for $(\f,q)\in (0,\pi)\times \mathbb{Z}\setminus \{0\}$. Moreover,
\begin{align*}
\f\in \{0,\pi\}\Rightarrow \mathscr{C}_{\s,\rho}(\f,q)=\left\{\begin{aligned}
&\{0\} &&\text{when $\s\in \{-\}\times \{\pm\}$ and $(\f,q)=(0,0)$}\\
&\emptyset &&\text{otherwise.}
\end{aligned}\right.
\end{align*}

\noindent$(iii)$ If $\rho=1$, we have $\mathscr{C}_{\s,\rho}(\f,q)=\emptyset$ for $(\f,q)\in (0,\pi)\times \mathbb{Z}\setminus \{0\}$. Moreover,
\begin{align*}
\f\in \{0,\pi\}\Rightarrow \mathscr{C}_{\s,\rho}(\f,q)=\left\{\begin{aligned}
&\{0\} &&\text{when $\s\in \{-\}\times \{\pm\}$ and $(\f,q)=(0,0)$}\\
&\{0\} &&\text{when $\s\in \{+\}\times \{+\}$ and $(\f,q)=(\pi,0)$}\\
&\{0\} &&\text{when $\s\in \{+\}\times \{-\}$ and $(\f,q)=(\pi,1)$}\\
&\emptyset &&\text{otherwise.}
\end{aligned}\right.\\
\end{align*}

\noindent$(iv)$ If $\rho>\frac{1}{2}$, we have  $\mathscr{C}_{\s,\rho}(\f,q)=\emptyset$ for $(\f,q)\in \left(0,\frac{\pi}{2}\right)\times \mathbb{Z}\setminus \{0\}$. Moreover,
\begin{align*}
\f=0\Rightarrow \mathscr{C}_{\s,\rho}(\f,q)=\left\{\begin{aligned}
&\{0\} &&\text{when $\s\in \{-\}\times \{\pm\}$ and $(\f,q)=(0,0)$}\\
&\emptyset &&\text{otherwise.}
\end{aligned}\right.
\end{align*}

\noindent$(v)$ If $\rho^{-1}\notin 2\mathbb{N}$, then $1\notin \mathscr{C}_{\s,\rho}(0)\cup \mathscr{C}_{\s,\rho}(\pi)$.

\noindent$(vi)$ We have 
\begin{align*}
\mathscr{C}_{(\s_1,+),\rho}(0,q)=\mathscr{C}_{(\s_1,-),\rho}(0,q),\quad \mathscr{C}_{(\s_1,+),\rho}(\pi,q)= \mathscr{C}_{(\s_1,-),\rho}(\pi,q+1).
\end{align*}
More precisely, for $\f_0=0$ or $\f_0=\pi$, one has $\mathscr{D}_{\rho,\s_1}(\f_0)=\bigsqcup_{q\in\mathbb{Z}}\mathscr{C}_{\s,\rho}(\f_0,q)\cap (0,1)$ and
\begin{align*}
\mathscr{C}_{(\s_1,+),\rho}(\f_0,q)\cap \mathscr{C}_{(\s_2,-),\rho}(\f_0,q')=\left\{\begin{aligned}
&\mathscr{C}_{(\s_1,+),\rho}(\f_0,q) &&\text{$q'=q$ and $\f_0=0$}\\
&\mathscr{C}_{(\s_1,+),\rho}(\f_0,q) &&\text{$q'=q+1$ and $\f_0=\pi$}\\
&\emptyset &&\text{otherwise},
\end{aligned}\right.
\end{align*}
where $\mathscr{D}_{\rho,\s_1}(\f_0)$ is defined in \eqref{eq:phasecritrue}.

\end{lem}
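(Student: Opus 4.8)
The plan is to analyze the equation defining $\mathscr{C}_{\s,\rho}(\f,q)$, namely
\[
-\frac{\s_1}{\rho}\cos^{-1}(\m)+\left(\s_2\f-\frac{\pi}{2\rho}\right)+2\pi q=0,
\]
which, after multiplying by $-\s_1\rho$, is equivalent to $\cos^{-1}(\m)=\s_1\left(\rho\s_2\f-\frac{\pi}{2}+2\pi\rho q\right)$ with the convention $\cos^{-1}(\m)\in[0,\frac{\pi}{2}]$ for $\m\in[0,1]$. So membership of $\m$ in $\mathscr{C}_{\s,\rho}(\f,q)$ is controlled entirely by whether the right-hand side lies in $[0,\frac{\pi}{2}]$: if it does, there is exactly one $\m=\cos(\s_1(\rho\s_2\f-\frac{\pi}{2}+2\pi\rho q))$ in the set, and if not, the set is empty. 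For part (i), since the right-hand side grows linearly in $|q|$ while $\f$ ranges over the compact set $[0,\pi]$, only finitely many pairs $(\f\text{-independent ``window''}, q)$ can put the right-hand side inside $[0,\frac{\pi}{2}]$; more precisely, for $\m$ to be in $\mathscr{C}_{\s,\rho}(\f)=\bigcup_q\mathscr{C}_{\s,\rho}(\f,q)$ we need $\s_1(\rho\s_2\f-\frac{\pi}{2}+2\pi\rho q)\in[0,\frac{\pi}{2}]$ for some $q$, and the number of such $q$ is bounded uniformly in $\f\in[0,\pi]$ because the $q$-values are spaced $2\pi\rho$ apart and the $\f$-contribution varies over an interval of length $\rho\pi$. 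This gives $\sup_\f\#\mathscr{C}_{\s,\rho}(\f)<\infty$.

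For parts (ii)--(iv) I would just substitute the relevant ranges of $\rho$ and $\f$ into the condition $\s_1(\rho\s_2\f-\frac{\pi}{2}+2\pi\rho q)\in[0,\frac{\pi}{2}]$ and check by hand which $(\s,\f,q)$ survive. For instance, when $\rho>1$ and $\f\in(0,\pi)$: if $q\neq 0$ then $|2\pi\rho q|\geq 2\pi\rho>2\pi$, so $\rho\s_2\f-\frac{\pi}{2}+2\pi\rho q$ has absolute value at least $2\pi\rho-\rho\pi-\frac{\pi}{2}=\rho\pi-\frac{\pi}{2}>\frac{\pi}{2}$, whence the set is empty; and for $q=0$, $\rho\s_2\f-\frac{\pi}{2}\in(-\frac{\pi}{2}-\rho\pi,\rho\pi-\frac{\pi}{2})$, and one checks this lies in $[0,\frac{\pi}{2}]$ (up to the sign $\s_1$) only in degenerate boundary situations excluded by $\f\in(0,\pi)$. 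Then at $\f\in\{0,\pi\}$ one evaluates directly: at $\f=0$ the condition reads $-\frac{\s_1\pi}{2}+2\pi\rho\s_1 q\in[0,\frac{\pi}{2}]$, forcing $q=0$ and $\s_1=-$, giving $\m=\cos(\pi/2)=0$; the case $\f=\pi$ and the analogous analysis for $\rho=1$ (part (iii), where the extra pairs at $\f=\pi$ appear because $2\pi\rho=2\pi$ exactly compensates) and $\rho>\frac12$ on the smaller window $\f\in(0,\frac\pi2)$ (part (iv), where $2\pi\rho>\pi$ still beats the $\f$-range $\rho\cdot\frac\pi2<\frac\pi2$ plus $\frac\pi2$... wait, more carefully $\rho\pi/2+\pi/2<2\pi\rho$ iff $\pi/2<3\pi\rho/2$ iff $\rho>1/3$, so indeed this holds for $\rho>\frac12$) are handled the same way. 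Part (v): if $1\in\mathscr{C}_{\s,\rho}(\f_0)$ with $\f_0\in\{0,\pi\}$ then $\cos^{-1}(1)=0$, so $\s_1(\rho\s_2\f_0-\frac\pi2+2\pi\rho q)=0$, i.e. $\rho\s_2\f_0+2\pi\rho q=\frac\pi2$; for $\f_0=0$ this gives $2\rho q=1/2$, i.e. $\rho^{-1}=4q\in 4\mathbb N\subset 2\mathbb N$, a contradiction, and for $\f_0=\pi$ it gives $\rho\s_2\pi+2\pi\rho q=\frac\pi2$, i.e. $\rho(\s_2+2q)=\frac12$, so $\rho^{-1}=2(\s_2+2q)\in 2\mathbb Z$, and since $\rho^{-1}>0$ this is in $2\mathbb N$, again a contradiction.

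Part (vi) is the bookkeeping step relating the $\s_2=+$ and $\s_2=-$ families and identifying the union with $\mathscr{D}_{\rho,\s_1}(\f_0)$. For $\f_0=0$ the phase equation $\cos^{-1}(\m)=\s_1(-\frac\pi2+2\pi\rho q)$ does not involve $\s_2$ at all, so $\mathscr{C}_{(\s_1,+),\rho}(0,q)=\mathscr{C}_{(\s_1,-),\rho}(0,q)$ trivially, and intersecting with $(0,1)$ and taking the union over $q$ reproduces exactly the defining condition $\cos^{-1}(\m)=\s_1(\frac\pi2+2\pi\rho q')$ of $\mathscr{D}_{\rho,\s_1}(0)$ after reindexing $q'=-q$ and using $\cos^{-1}(\m)=\s_1(-\frac\pi2+2\pi\rho q)\Leftrightarrow \cos^{-1}(\m)=\s_1(\frac\pi2+2\pi\rho(q-\s_1\cdot\frac{1}{2\rho}))$ — more cleanly, I would note $-\frac\pi2+2\pi\rho q$ and $\frac\pi2+2\pi\rho q'$ range over the same set of reals iff ... actually the cleanest route is: $\mu\in\mathscr{D}_{\rho,\s_1}(0)$ iff $\cos^{-1}\mu=\s_1(\frac\pi2+2\pi\rho q)$ for some $q\in\ze$, and $\mu\in\mathscr{C}_{(\s_1,\pm),\rho}(0,q')$ iff $\cos^{-1}\mu=\s_1(-\frac\pi2+2\pi\rho q')$; since $\{\frac\pi2+2\pi\rho q:q\in\ze\}$ and $\{-\frac\pi2+2\pi\rho q':q'\in\ze\}$ are both cosets of $2\pi\rho\ze$ in $\re$, they either coincide or are disjoint, and here they are disjoint in general but their intersection with $[0,\pi/2]$-values is what matters; I will simply verify the displayed equalities by the explicit substitution $\s_2\f_0\in\{0,\pi\}$, noting that for $\f_0=\pi$, $\rho\s_2\pi=\s_2\rho\pi$ and shifting $q\mapsto q+1$ converts $\s_2=+$ into $\s_2=-$ up to the constant $2\rho\pi$, which is precisely the claimed index shift, and the intersection formula in (vi) then follows since two one-element (or empty) sets defined by these shifted conditions coincide exactly when the indices match as stated. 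The main obstacle I anticipate is purely organizational: keeping the four sign choices $(\s_1,\s_2)$, the $\cos^{-1}$ range convention $[0,\frac\pi2]$ (as opposed to $[0,\pi]$ used elsewhere in the paper), and the index shifts in (vi) all consistent, since a single sign slip propagates through every case — but each individual verification is an elementary inequality or a one-line substitution.
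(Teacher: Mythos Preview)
Your proposal is correct and matches the paper's approach: the paper itself writes only ``The proof is elementary. We omit the detail,'' and what you outline is precisely that elementary case-by-case verification of when $\sigma_1(\rho\sigma_2\varphi-\tfrac{\pi}{2}+2\pi\rho q)$ lands in $[0,\tfrac{\pi}{2}]$. Your treatment of (i)--(v) is clean; the only place you get tangled is (vi), where your attempt to match $\{-\tfrac{\pi}{2}+2\pi\rho q\}$ with $\{\tfrac{\pi}{2}+2\pi\rho q'\}$ stalls --- but you correctly identify this as purely a bookkeeping issue with the sign conventions and index shifts, not a conceptual one, and the two displayed identities (the $\sigma_2$-independence at $\varphi_0=0$ and the $q\mapsto q+1$ shift at $\varphi_0=\pi$) you do verify are the substantive content.
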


\begin{proof}
The proof is elementary. We omit the detail.
\end{proof}

\subsection*{Symbolic estimates}

In order to apply the stationary phase method, we also need symbolic estimates for the phase function $S_{1,\s}$ and the amplitude $\z_{1,\s,q}$ appearing in \eqref{eq:I_1sum}.

\begin{lem}\label{lem:phase}
Let $0<\e_1<1$ and $\s\in \{\pm\}\times \{\pm\}$.
For $\a\geq 1$, there exists $C_{\a}>0$ such that
\begin{align*}
|\pa_m^{\a+1}S_{1,\s}(m,x,\f)|\leq \begin{cases}C_{\a}x^{-\a}\quad \text{for}\quad 1\leq \n_m\leq \e_1 x\\
C_{\a}x^{-\frac{1}{2}}(x-\n_m)^{-\a+\frac{1}{2}}\quad \text{for}\quad \e_1 x\leq \n_m\leq x-\frac{1}{2}x^{\frac{1}{3}}
\end{cases}
\end{align*}
and for $m\in \Omega_1$, $x\gg 1$ and $\f\in [0,\pi]$.

\end{lem}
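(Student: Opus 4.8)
The plan is to compute $\pa_m^{\a+1}S_{1,\s}$ directly from \eqref{eq:S_1cal}, noting that the term $\s_2\f - \frac{\pi}{2\rho}$ is constant in $m$ and the additive $2\pi q m$ contributes nothing to derivatives of order $\geq 2$; hence for $\a\geq 1$ we have $\pa_m^{\a+1}S_{1,\s}(m,x,\f) = -\frac{\s_1}{\rho}\,\pa_m^{\a}\bigl[\cos^{-1}(m/(\rho x))\bigr]$. So the whole lemma reduces to a symbolic estimate for the $m$-derivatives of $\mu\mapsto \cos^{-1}(\mu)$ evaluated at $\mu = m/(\rho x)$, with the chain rule producing a factor $(\rho x)^{-\a}$. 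Concretely $\pa_\mu \cos^{-1}(\mu) = -(1-\mu^2)^{-1/2}$, and by induction $\pa_\mu^{\a}\cos^{-1}(\mu)$ is a finite linear combination of terms of the form $\mu^{j}(1-\mu^2)^{-\a+\frac12 - \ell}$ with appropriate nonnegative integer exponents; the key point is that the most singular behavior as $\mu\to 1^-$ is $(1-\mu^2)^{-\a+\frac12}$, and $|\mu|\leq 1$ on the relevant range so the polynomial factors are harmless.

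Next I would translate this back to the variable $m$ and the two regimes. First I relate $1-\mu^2$ to $x-\n_m$: on $\Omega_1$ we have $\n_m \approx m/\rho$ (more precisely $\n_m = \sqrt{\rho^{-2}m(m+2d)+d^2+c}$, so $\n_m/x$ and $m/(\rho x)$ differ by a quantity that is $O(1/x)$ uniformly and, near the turning point, is lower order than $1-\n_m/x$), and $1-\mu^2 = (1-\mu)(1+\mu) \asymp 1-\mu \asymp 1 - \n_m/x = (x-\n_m)/x$ when $\mu$ is bounded away from $0$. For the first regime $1\leq \n_m\leq \e_1 x$ the argument $\mu = m/(\rho x)$ stays in a compact subset of $[0,1)$ bounded away from $1$ (with the bound depending on $\e_1$), so $(1-\mu^2)^{-1}$ and all its powers are $O(1)$, and we simply get $|\pa_m^{\a+1}S_{1,\s}|\leq C_\a (\rho x)^{-\a}\lesssim C_\a x^{-\a}$. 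For the second regime $\e_1 x\leq \n_m\leq x-\frac12 x^{1/3}$, the factor $(\rho x)^{-\a}$ combines with $(1-\mu^2)^{-\a+\frac12}\asymp \bigl((x-\n_m)/x\bigr)^{-\a+\frac12} = x^{\a-\frac12}(x-\n_m)^{-\a+\frac12}$ (valid since $-\a+\tfrac12<0$ for $\a\geq 1$, so this is the dominant power), yielding $x^{-\a}\cdot x^{\a-\frac12}(x-\n_m)^{-\a+\frac12} = x^{-\frac12}(x-\n_m)^{-\a+\frac12}$, exactly as claimed; the subleading terms $(1-\mu^2)^{-\a+\frac12-\ell}$ with $\ell\geq 1$ give even better decay in $x-\n_m$ on this range (where $x-\n_m\geq \tfrac12 x^{1/3}\to\infty$) and so are absorbed.

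The only real bookkeeping is controlling the difference between $m/(\rho x)$ and $\n_m/x$ near the turning point to justify replacing $1-(m/\rho x)^2$ by a constant times $(x-\n_m)/x$; I would handle this by writing $\n_m^2 - (m/\rho)^2 = d^2 + c + \rho^{-2}\cdot 2dm = O(m) = O(x)$, hence $\n_m - m/\rho = O(1)$ uniformly, so $x - m/\rho = (x-\n_m) + O(1)$, and on the range $x-\n_m\geq \tfrac12 x^{1/3}$ the $O(1)$ error is negligible compared to $x-\n_m$, giving $x - m/\rho \asymp x-\n_m$ and thus $1-(m/\rho x)^2 \asymp (x-\n_m)/x$. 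I expect this comparison near the turning point — keeping track of which quantity ($m/\rho$ versus $\n_m$) controls the singularity and verifying the error terms are subordinate on $\Omega_1$ — to be the main (though still routine) obstacle; everything else is the elementary induction on derivatives of $\arccos$ together with the chain-rule power of $x$. This is entirely parallel to the corresponding estimate in \cite{TT}, the only new feature being the explicit radius-$\rho$ scaling inside $h_1$, which merely rescales $x\mapsto \rho x$ in the argument and is harmless.
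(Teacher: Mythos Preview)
Your approach is correct and is exactly the direct computation the paper has in mind (the paper itself omits the details, writing only that the lemma ``directly follows from the definition of $S_{1,\s}$''). One small slip: your subleading terms should read $(1-\mu^2)^{-\a+\frac12+\ell}$ with $\ell\geq 1$ (less singular, not more), consistent with your correct assertion that $(1-\mu^2)^{-\a+\frac12}$ is the \emph{most} singular factor; with that sign fixed, the absorption argument goes through as you describe.
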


\begin{lem}\label{lem:amp}

Let $\s=(\s_1,\s_2)\in \{\pm\}\times\{\pm\}$, $N>0$, $0<\e<\pi/2$ and $\a\in\mathbb{N}$. 

\noindent$(i)$ $\supp \z_{1,\s,q}(\cdot,x,\f)\subset \Omega_1$.

\noindent$(ii)$ Let $0<\e_1<1$ and $0<\e_0<\frac{\pi}{2}$. Then, we have
\begin{align*}
|\pa_{m}^{\a}\z_{1,\s,q}(m,x,\f)|\lesssim \begin{cases}x^{-d-\frac{1}{2}}(1+m)^{2d-\a}(1+m\sin \f)^{-d} \quad \text{when}\quad \n_m\leq \e_1 x \\
x^{d-\frac{1}{4}} (x-\m_m)^{-\frac{1}{4}-\a}(1+m\sin \f)^{-d}\quad\text{when}\quad \e_1 x\leq \n_m\leq x-\frac{1}{2}x^{\frac{1}{3}}
\end{cases}
\end{align*}
for $m\geq 1$, $x\gg 1$, $\f\in [0,\pi]$ and $q\in\mathbb{Z}$. In particular, we have
\begin{align*}
|\pa_{m}^{\a}\z_{1,\s,q}(m,y,\f)|\lesssim \begin{cases}x^{-d-\frac{1}{2}}(1+m)^{d-\a}\quad \text{when}\quad \n_m\leq \e_1 x \\
x^{-\frac{1}{4}} (x-\n_m)^{-\frac{1}{4}-\a}\quad\text{when}\quad \e_1 x\leq \n_m\leq x-\frac{1}{2}x^{\frac{1}{3}}
\end{cases}
\end{align*}
for $m\geq 1$, $x\gg 1$, $\f\in [\e_0,\pi-\e_0]$, and $q\in\mathbb{Z}$.

\noindent$(iii)$ Suppose that $\f_0=0$ or $\f_0=\pi$. Let $q_0\in\mathbb{Z}$ and $\m_0\in \mathscr{C}_{\s,\rho}(\f_0,q_0)\cap (0,1)$. Then we have
\begin{align*}
\z_{1,\s,q_0}(\rho x\m_0,x,\f_0)=\frac{\rho\m_0}{(2\pi)^{\frac{1}{2}}(1-\m_0^2)^{\frac{1}{4}}}e^{i\left(L_{\s_1,d,\rho,\m_0}-\s_1\frac{\pi}{4} \right)} \times x^{-d+\frac{1}{2}} g_{d,\s_2}(\rho\m_0x,\f_0)    +O(x^{d-1})
\end{align*}
for $x\gg 1$, where $g_{d,\s_2}$ is the function given in Proposition \ref{Gegenasymp} and $L_{\s_1,d,\rho,\m_0}\in \re$ is a constant depending only on $\s_1,d,\rho,\m_0$.

\end{lem}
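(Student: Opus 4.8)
The plan is to prove Lemma \ref{lem:amp} by unpacking the definitions of $\z_{1,\s,q}$ from \eqref{eq:zetaqdef} and of $\z_{1,\s}$ from Subsection \ref{subsec:decom}, and then tracking the sizes of each factor using the cutoff estimates in Lemma \ref{Cutoff}, the Bessel amplitude bounds from Proposition \ref{Besselasymp}, the Gegenbauer asymptotics from Proposition \ref{Gegenasymp}, and the phase derivative bounds from Lemma \ref{lem:phase}. Part (i) is immediate: $\z_{1,\s}$ carries the factor $\chi_{1,x}(m)$ whose support lies in $\Omega_1$ by \eqref{POUproper}, and differentiating it in $m$ (as in the recursion \eqref{eq:zetaqdef}) does not enlarge the support.

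For part (ii), I would first establish the case $q=0$, where $\z_{1,\s,0}=\z_{1,\s}$. Writing out the factors of $\z_{1,\s}$, one has $x^{-d-\frac14}$ from the prefactor, $(m+d)$ contributing $(1+m)$, $(x-\n_m)^{-\frac14}$ from the Bessel amplitude normalization, $g_{d,\s_2}(m,\f)$ contributing $(1+m\sin\f)^{-d}$ from Proposition \ref{Gegenasymp} (after shifting the argument from $\n_m$ to $m$, which costs only bounded factors since $|\n_m-m|=O(1)$), and the amplitude $a_{\s_1,x}(\n_m)$ which is bounded. Here a subtlety is the region split: for $\n_m\le\e_1 x$ the factor $(x-\n_m)^{-\frac14}$ is comparable to $x^{-\frac14}$, yielding the bound $x^{-d-\frac12}(1+m)^{2d-\a}(1+m\sin\f)^{-d}$, while for $\e_1 x\le\n_m\le x-\frac12 x^{\frac13}$ the factor $(x-\n_m)^{-\frac14}$ stays and one absorbs $(1+m)^{2d}\sim x^{2d}$ to get $x^{d-\frac14}(x-\n_m)^{-\frac14-\a}(1+m\sin\f)^{-d}$. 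Each $m$-derivative either hits $\chi_{1,x}$ (costing $\max(m^{-1},(x-\n_m)^{-1})$ by \eqref{POUproper}) or the Bessel amplitude factor $(x-\n_m)^{-\frac14}$ (costing another $(x-\n_m)^{-1}$, using $\pa_m\n_m\sim\rho^{-1}$) or the oscillatory exponential $e^{i\s_1 x(h_1(\n_m/x)-h_1(m/x))}$; the last is the delicate point, but since $h_1'(\m)=-\cos^{-1}(\m)$ is bounded and $|\n_m-m|=O(1)$, the exponent has bounded $m$-derivatives in the region $\n_m\le\e_1 x$ and derivatives bounded by negative powers of $(x-\n_m)$ in the transition region, consistent with the claimed bounds. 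For $q\ne0$, I apply \eqref{eq:zetaqdef}: $\z_{1,\s,q}$ is $(2\pi i q)^{-1}$ times $\pa_m\z_{1,\s}+i(\pa_m S_{1,\s})\z_{1,\s}$; the first term is covered by the $q=0$ estimate with one extra derivative, and for the second term I combine the $q=0$ bound on $\z_{1,\s}$ with Lemma \ref{lem:phase}, noting $|\pa_m S_{1,\s}|\lesssim 1$ (it is bounded outright, being $\rho^{-1}\cos^{-1}(m/\rho x)$ plus a constant), so no loss in powers of $x$; the $1/q$ prefactor only improves things. The specialization in the second display of (ii) follows by restricting to $\f\in[\e_0,\pi-\e_0]$, where $\sin\f$ is bounded below and $(1+m\sin\f)^{-d}\lesssim(1+m)^{-d}$, which turns $(1+m)^{2d}$ into $(1+m)^{d}$.

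For part (iii), the point is an exact leading-order evaluation at a critical point $\m_0\in\mathscr{C}_{\s,\rho}(\f_0,q_0)\cap(0,1)$, i.e.\ at $m=\rho x\m_0$, which forces $\n_m\sim\rho x\m_0$, well inside $\Omega_1$ away from the turning point $\n_m=x$ since $\m_0<1$; thus the Bessel amplitude $a_{\s_1,x}(\n_m)$ from Proposition \ref{Besselasymp} is in its regime and has a known leading term. I would substitute the leading asymptotics of $a_{\s_1,x}$ at $\n_m=\rho x\m_0$, combine the phase factor $e^{i\s_1 x(h_1(\n_m/x)-h_1(m/x))-\frac{\pi}{2}i(\n_m-m)}$ with the explicit value of $h_1$ and $\n_m-m=O(1)$ (absorbing the accumulated constant phase into $L_{\s_1,d,\rho,\m_0}$ together with the $-\s_1\pi/4$ from the stationary-phase/airy normalization of $a_{\s_1,x}$), use $\chi_{1,x}(\rho x\m_0)=1$ (since $\m_0\in(0,1)$ is a fixed interior point, for $x$ large the point $\rho x\m_0$ lies in the region where $\chi_{1,x}\equiv1$), evaluate $(m+d)(x-\n_m)^{-1/4}=\rho x\m_0\cdot(x(1-\rho^2\m_0^2/\dots))^{-1/4}$ up to $O(x^{-1})$ relative corrections giving the factor $\rho\m_0/(1-\m_0^2)^{1/4}$ times $x^{1-1/4}$, and keep $g_{d,\s_2}(\rho\m_0 x,\f_0)$ unevaluated as it appears in the statement; the error $O(x^{d-1})$ comes from the subleading term of $a_{\s_1,x}$ and from replacing $\n_m$ by $\rho x\m_0$ in the non-oscillatory factors. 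The main obstacle throughout is bookkeeping the transition region $\e_1 x\le\n_m\le x-\frac12 x^{1/3}$ correctly — matching the powers of $(x-\n_m)$ generated by differentiating $\chi_{1,x}$, the Bessel amplitude, and the oscillatory exponential against the claimed $(x-\n_m)^{-1/4-\a}$ — and making sure the argument shift from $\n_m$ to $m$ in the Gegenbauer factor $g_{d,\s_2}$ really costs only bounded multiplicative factors uniformly in $\f\in[0,\pi]$; both are handled by the uniform bound $|\n_m-m|=O(1)$ on $\Omega_1$ together with $\pa_m\n_m\sim\rho^{-1}$, but they require care near $\f=0,\pi$ where $g_{d,\s_2}$ degenerates.
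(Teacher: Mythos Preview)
Your overall strategy matches the paper's: parts (i)--(ii) are bookkeeping from the definitions together with Propositions \ref{Besselasymp}, \ref{Gegenasymp} and Lemma \ref{Cutoff}, and part (iii) is a leading-order evaluation at $m=\rho x\mu_0$. Two points, however, need correction.

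First, the repeated claim $|\nu_m-m|=O(1)$ is false unless $\rho=1$. From \eqref{def:num} one has $\nu_m=\rho^{-1}(m+d)+O(m^{-1})$, so what is bounded is $\nu_m-\rho^{-1}m$, not $\nu_m-m$. In particular, at $m=\rho x\mu_0$ one gets $\nu_m=x\mu_0+\rho^{-1}d+O(x^{-1})$, so $\nu_m/x\to\mu_0$ (not $\rho\mu_0$). Your reasoning that the residual exponential in $\zeta_{1,\sigma}$ has bounded $m$-derivatives is salvageable, but the correct mechanism is that $\nu_m/x-m/(\rho x)=O(x^{-1})$, whence $x\bigl(h_1(\nu_m/x)-h_1(m/(\rho x))\bigr)$ and $\nu_m-\rho^{-1}m$ are bounded with bounded derivatives; it is not because $\nu_m-m$ is bounded. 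This also fixes the stray $\rho$'s in your intermediate formulas for (iii).

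Second, and more substantively, in (iii) you jump straight to evaluating the factors of $\zeta_{1,\sigma}$, but the statement concerns $\zeta_{1,\sigma,q_0}$. For $q_0\ne0$ these differ by the recursion \eqref{eq:zetaqdef}, and you need the cancellation that the paper makes explicit: since $\mu_0\in\mathscr{C}_{\sigma,\rho}(\varphi_0,q_0)$ means $(\partial_m S_{1,\sigma})(\rho x\mu_0,x,\varphi_0)=-2\pi q_0$, the term $-\tfrac{1}{2\pi i q_0}\,i(\partial_m S_{1,\sigma})\zeta_{1,\sigma}$ collapses to $\zeta_{1,\sigma}$ exactly, and the remaining $-\tfrac{1}{2\pi i q_0}\partial_m\zeta_{1,\sigma}$ is $O(x^{d-3/2})$ by part (ii). This yields $\zeta_{1,\sigma,q_0}(\rho x\mu_0,x,\varphi_0)=\zeta_{1,\sigma}(\rho x\mu_0,x,\varphi_0)+O(x^{d-3/2})$, after which your factor-by-factor expansion goes through. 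Without this step your argument only covers $q_0=0$.
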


\begin{rem}
The asymptotic behavior of the function $g_{d,\s_2}$, which arises from the expansion of the Gegenbauer polynomials cannot be determine in a canonical way. However, the asymptotic of $g_{d,+}+g_{d,-}$ can be explicitly calculated as we will see in Proposition \ref{Gegenasymp}.

\end{rem}

\begin{proof}
These lemmas directly follow from the definition of $S_{1,\s}$ and $\z_{1,\s}$ given in Subsection \ref{subsec:decom}, and Propositions \ref{Besselasymp}, \ref{Gegenasymp}. Here, we prove Lemma \ref{lem:amp} $(iii)$ only.

We see from the definition \eqref{def:num} of $\n_m$ that
\begin{align}\label{eq:numasym}
\n_m=\sqrt{\rho^{-2}(m+d)^2+d^2+c}=\rho^{-1}(m+d)+O(m^{-1})\quad \text{as}\quad (m\to \infty).
\end{align}
Since $\m_0\in (0,1)$, there exists $0<\e_1<1$ such that $\n_{\rho x\m_0}\leq \e_1x$ for sufficiently large $x$. Then, Lemma \ref{lem:amp} $(ii)$ implies $(\pa_m\z_{1,\s})(\rho x\m_0,x,\f_0)=O(x^{d-\frac{3}{2}})$ and the assumption $\m_0\in \mathscr{C}_{\s,\rho}(\f_0,q_0)$ means $(\pa_mS_{1,\s})(\rho x\m_0,x,\f_0)=-2\pi q_0$. Therefore, 
\begin{align}\label{eq:crizetaq_0}
\z_{1,\s,q_0}(\rho x\m_0,x,\f_0)=\z_{1,\s}(\rho x\m_0,x,\f_0)+O(x^{d-\frac{3}{2}})
\end{align}
for $x\gg 1$ by \eqref{eq:zetaqdef}.

Now we study the asymptotic behavior of $\z_{1,\s}$. We recall that $\z_{1,\s}$ is the function of the form
\begin{align*}
x^{-d-\frac{1}{4}}\chi_{1,x}(m)(m+d)(x-\n_m)^{-\frac{1}{4}}g_{d,\s_2}(m,\f)a_{\s_1,x}(\n_m)e^{i\s_1x\left(h_1\left(\frac{\n_m}{x}\right)-h_1\left(\frac{m}{x}\right) \right)-\frac{\pi}{2}i(\n_m-m) }.
\end{align*}
It follows from Propositions \ref{Besselasymp}, \ref{Gegenasymp}, and \eqref{eq:numasym} that
\begin{align*}
&x^{-d-\frac{1}{4}}\chi_{1,x}(m)(\rho x\m_0+d)(x-\n_{\rho x\m_0})^{-\frac{1}{4}}=\rho \m_0(1-\m_0)^{-\frac{1}{4}}x^{-d+\frac{1}{2}}+O(x^{-d-1})\\
&h_1\left(\frac{\n_{\rho x\m_0}}{x}\right)-h_1\left(\m_0\right)=h_1\left(\m_0+\frac{d}{\rho x}+O\left(\frac{1}{x^2}\right)\right)-h_1\left(\m_0\right)=\frac{d}{\rho }h_1'(\m_0)x^{-1}+O(x^{-2})\\
&a_{\s_1,x}(\n_{\rho x\m_0})=\frac{e^{-\frac{\pi}{4}i\s_1}}{(2\pi)^{\frac{1}{2}}(1+\m_0)^{\frac{1}{4}}  }+O(x^{-\frac{1}{2}}),\quad \n_{\rho x\m_0}-\rho x\m_0=\rho^{-1}d+O(x^{-1}).
\end{align*}
Now, Lemma \ref{lem:amp} $(iii)$ is proved by these identities and \eqref{eq:crizetaq_0} by setting 
\begin{align*}
L_{\s_1,d,\rho,\m_0}=\s_1\frac{d}{\rho}h_1'(\m_0)-\frac{\pi d}{2\rho}=-\s_1\frac{d}{\rho}\cos^{-1}(\m_0)-\frac{\pi d}{2\rho}.
\end{align*}
\end{proof}

\subsection{Extraction of principal terms via the non-stationary phase method}\label{subsec:nonsta}

Let $\chi\in C^{\infty}(\re;[0,1])$ such that $\chi(m)=1$ for $m\leq 1$ and $\chi(m)=0$ for $m\geq 2$ and setting $\overline{\chi}=1-\chi$. We define $\chi_{\e,\m_0}(m)=\chi((\m-\m_0)/\e)$, where we write $m=\rho x\m$. We define $\overline{\chi_{\e,x}}=1-\chi_{\e,x}$ and
\begin{align*}
I_{\s,q,\m_0}^{\e}(x,\f):=\int_{\re}\chi_{\e,\m_0}(m) \z_{1,\s,q}(m,x,\f)e^{i(S_{1,\s}(m,x,\f)+2\pi qm)}dm.
\end{align*}

\begin{prop}\label{prop:Non-stationary}
Let $\s\in \{\pm\}\times \{\pm\}$ and $N>0$. For $\e>0$ sufficiently small, we have
\begin{align}\label{eq:Non-stationary}
I_{1,\s}(x,\f)=\sum_{q\in\mathbb{Z}}\sum_{\m_0\in \mathscr{C}_{\s,\rho}(\f,q)}I_{\s,q,\m_0}^{\e}(x,\f)+O(x^{-N})
\end{align}
for $x\gg 1$ and $\f\in [0,\pi]$.

\end{prop}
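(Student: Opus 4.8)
The plan is to prove \eqref{eq:Non-stationary} by a standard non-stationary phase argument applied to the integral representation \eqref{eq:I_1sum} of $I_{1,\s}$, using the symbolic bounds from Lemmas \ref{lem:phase} and \ref{lem:amp} together with the finiteness of the critical set from Lemma \ref{lem:phasecript} $(i)$. First I would fix $\s$ and choose $\e > 0$ small enough that the finitely many balls $\{|\m - \m_0| < 2\e\}$, $\m_0 \in \mathscr{C}_{\s,\rho}(\f,q)$, $q \in \mathbb{Z}$, are pairwise disjoint and uniformly so (for all $\f \in [0,\pi]$); this is possible precisely because $\sup_\f \#\mathscr{C}_{\s,\rho}(\f) < \infty$ and because, away from $\f \in \{0,\pi\}$, the total number of nonempty $\mathscr{C}_{\s,\rho}(\f,q)$ over $q$ is bounded — one should note here that $\pa_m S_{1,\s}$ maps $\Omega_1$ into a bounded interval, so only finitely many $q$ contribute to \eqref{eq:I_1sum} at all. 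Then on the support of $\overline{\chi_{\e,\m_0}}$ for every relevant $\m_0$ (equivalently, on the region where $\frac{m}{\rho x}$ stays a fixed distance from $\mathscr{C}_{\s,\rho}(\f,q)$), the phase $S_{1,\s}(m,x,\f) + 2\pi q m$ has no critical point, and I would quantify this: by \eqref{eq:S_1cal} and the mean value theorem, $|\pa_m(S_{1,\s} + 2\pi q m)| \gtrsim \e x^{-1}\cdot$(something) — more precisely the derivative of $-\frac{\s_1}{\rho}\cos^{-1}(m/(\rho x))$ in $m$ is $\frac{\s_1}{\rho^2 x}(1 - (m/\rho x)^2)^{-1/2}$, so on $\n_m \le \e_1 x$ one gets a lower bound $\gtrsim x^{-1}$ for the phase derivative once $\frac{m}{\rho x}$ is bounded away from the critical values, while near the turning point $\n_m \sim x$ the derivative degenerates but $\z_{1,\s,q}$ is supported in $\Omega_1$ where $\n_m \le x - \tfrac12 x^{1/3}$, giving a lower bound like $x^{-2/3}$ there.

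Next I would carry out integration by parts with the operator $L = \frac{1}{i\,\pa_m(S_{1,\s} + 2\pi q m)}\pa_m$, which preserves the class of amplitudes: each application gains a factor of the reciprocal phase derivative and its derivatives (controlled by Lemma \ref{lem:phase}) and differentiates $\overline{\chi_{\e,\m_0}}\z_{1,\s,q}$ (controlled by Lemma \ref{lem:amp} $(ii)$ together with the bound $|\pa_m^\a \overline{\chi_{\e,\m_0}}| \lesssim_\a (\e x)^{-\a}$, recalling $m = \rho x \m$). The key bookkeeping point is that each integration by parts improves the power of $x$ by a definite positive amount: in the region $\n_m \le \e_1 x$ the phase-derivative lower bound $\gtrsim x^{-1}$ costs $x$ per step but the amplitude derivative bound $(1+m)^{2d-\a}$ and the cutoff derivative bound $(\e x)^{-\a}$ more than compensate after integrating the resulting $(1+m)^{2d-\a}$ over $m \lesssim x$ — one needs to check that the net exponent of $x$ decreases, which it does because the amplitude decays in $m$ like $m^{2d}$ times the cutoff losing $x^{-\a}$; in the turning-point region $\e_1 x \le \n_m \le x - \tfrac12 x^{1/3}$ the worst case is the phase-derivative lower bound $\gtrsim (x-\n_m)^{1/2} x^{-1/2}$ paired with the amplitude bound $x^{-1/4}(x-\n_m)^{-1/4-\a}$ from Lemma \ref{lem:amp} $(ii)$, and integrating $(x-\n_m)^{-1/2-2\a-1/4}$ over the slab $x^{1/3} \lesssim x - \n_m \lesssim x$ produces arbitrarily large negative powers of $x$ as $\a \to \infty$. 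Summing the finitely many $q$ with nonempty critical set, and absorbing the (uniformly, in $q$, rapidly decaying) tails from the $q$ with empty critical set — where the phase derivative is bounded below uniformly — yields $O(x^{-N})$ for the complementary piece, which is exactly \eqref{eq:Non-stationary}.

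The main obstacle I anticipate is the uniformity of the estimates down to $\f = 0$ and $\f = \pi$, where critical points can collide or migrate to the endpoint $\m = 1$ of the $m$-range. By Lemma \ref{lem:phasecript} $(i)$ the number of critical points stays bounded, and by the symbolic estimates the argument is uniform, but one must make sure the separation parameter $\e$ can be chosen once and for all independently of $\f$: critical points move continuously in $\f$, and two distinct branches $\mathscr{C}_{\s,\rho}(\f,q)$ and $\mathscr{C}_{\s,\rho}(\f,q')$ stay separated because $\cos^{-1}$ is injective, so the only coalescence happens when a critical value leaves $[0,1]$, which is harmless. A secondary technical point is that the amplitude $\z_{1,\s,q}$ for $q \ne 0$ is defined in \eqref{eq:zetaqdef} by differentiating $\z_{1,\s}$ and multiplying by $\pa_m S_{1,\s}$; one checks via Lemmas \ref{lem:phase} and \ref{lem:amp} $(ii)$ that $\z_{1,\s,q}$ obeys the same symbolic bounds as $\z_{1,\s}$ up to a harmless factor that decays in $|q|$ (because of the $\tfrac{1}{2\pi i q}$), so the sum over $q$ converges and is handled uniformly. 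With $\e$ fixed this way, the integration-by-parts estimates go through term by term and the proposition follows.
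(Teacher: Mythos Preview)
Your overall plan matches the paper's proof: localize near the critical values $\m_0\in\mathscr{C}_{\s,\rho}(\f,q)$ in the Poisson representation \eqref{eq:I_1sum} and kill the complement by integration by parts. However, there is a concrete error in your lower bound for the phase derivative, and with that error the bookkeeping you describe does \emph{not} produce any gain.

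You assert that away from the critical set one has $|\pa_m(S_{1,\s}+2\pi q m)|\gtrsim x^{-1}$ (and $\gtrsim x^{-2/3}$ near the turning point). What you actually computed, $\frac{\s_1}{\rho^2 x}(1-(m/\rho x)^2)^{-1/2}$, is $\pa_m^2S_{1,\s}$, not a lower bound for $\pa_mS_{1,\s}$. With only an $x^{-1}$ lower bound, each integration by parts costs a factor $x$ from the reciprocal, and your claimed compensation from the amplitude does not occur: in the region $\n_m\le \e_1 x$, after $K$ steps the leading term has size
\[
x^{K}\cdot x^{-d-\frac12}\int_1^{Cx}(1+m)^{2d-K}\,dm,
\]
which equals $x^{d+1/2}$ for $K\le 2d$ and $\sim x^{K-d-1/2}$ for $K>2d+1$. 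No net decay is produced.

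The correct observation --- this is exactly the content of the paper's Lemma~\ref{lem:nonstphase} --- is that by \eqref{eq:S_1cal} the function $\pa_mS_{1,\s}$ depends on $m$ only through $\m=m/(\rho x)\in[0,1]$; it is a continuous function of $(\m,\f)$ with no residual $x$-dependence. Hence once $\m$ stays a fixed distance $\e$ from the finite set $\mathscr{C}_{\s,\rho}(\f,q)$ one has $|\pa_m(S_{1,\s}+2\pi q m)|\ge c$ with $c>0$ independent of $x$. With this constant lower bound the integration by parts, combined with Lemmas~\ref{lem:phase} and~\ref{lem:amp}, gives $O(x^{-N})$ immediately, as in the paper's proof. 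The same formula also gives $|\pa_m(S_{1,\s}+2\pi q m)|\gtrsim 1+|q|$ for $|q|$ large (Lemma~\ref{lem:nonstphase}~$(ii)$), which is what makes the tail in $q$ summable; note that all $q\in\mathbb{Z}$ appear in \eqref{eq:I_1sum}, and the single factor $\tfrac{1}{2\pi i q}$ in \eqref{eq:zetaqdef} is not by itself summable, so your remark that ``only finitely many $q$ contribute'' needs this quantitative input.
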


\begin{rem}
By Lemma \ref{lem:phasecript} $(i)$, the number of the sum in the right hand side is finite.
\end{rem}

To prove this proposition, we need the following lemma.

\begin{lem}\label{lem:nonstphase}
For $0\leq m\leq \rho x$ and $x\gg 1$, we write
\begin{align*}
\m:=\frac{m}{\rho x}\in [0,1].
\end{align*}

\noindent$(i)$ Let $q_0\in \mathbb{Z}$. Then, there exists $c>0$ such that if we choose $\e>0$ sufficiently small, 
\begin{align*}
|\pa_m(S_{1,\s}(m,x,\f)+2\pi q_0m )|\geq c
\end{align*}
for $\m\in [0,1]$, $x\gg 1$, and $\f\in [0,\pi]$ satisfying $\inf_{\m_0\in \mathscr{C}_{\s,\rho}(\f,q_0)}|\m-\m_0|\geq 2\e$

\noindent$(ii)$ There exist $c>0$ and $Q>0$ such that
\begin{align*}
|\pa_m(S_{1,\s}(m,x,\f)+2\pi qm )|\geq c(1+|q|)
\end{align*}
for $0\leq \m\leq 1$, $x\gg 1$, $\f\in [0,\pi]$ and $q\in\mathbb{Z}$ satisfying $|q|\geq Q$. In particular, $\mathscr{C}_{\s,\rho}(\f,q)=\emptyset$ for $|q|\geq Q$.

\end{lem}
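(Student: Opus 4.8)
The plan is to work directly from the formula \eqref{eq:S_1cal} for $\pa_mS_{1,\s}$, namely $\pa_m(S_{1,\s}(m,x,\f)+2\pi qm)=-\frac{\s_1}{\rho}\cos^{-1}(\m)+\s_2\f-\frac{\pi}{2\rho}+2\pi q$, which depends on $m$ only through $\m=m/(\rho x)\in[0,1]$. Write $F_{\s,q}(\m,\f):=-\frac{\s_1}{\rho}\cos^{-1}(\m)+\s_2\f-\frac{\pi}{2\rho}+2\pi q$. For part $(i)$, fix $q_0$. The set $\mathscr{C}_{\s,\rho}(\f,q_0)$ is exactly the zero set of $\m\mapsto F_{\s,q_0}(\m,\f)$; since $\cos^{-1}$ is strictly monotone with $\pa_\m\cos^{-1}(\m)=-(1-\m^2)^{-1/2}$, the function $F_{\s,q_0}(\cdot,\f)$ is strictly monotone on $[0,1]$ but its derivative degenerates as $\m\to 1$. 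The key observation is that on any interval $[0,1-\d]$ with $\d>0$ fixed, $F_{\s,q_0}(\cdot,\f)$ is uniformly (in $\f$) bi-Lipschitz, so staying at distance $\geq 2\e$ from every zero forces $|F_{\s,q_0}|\geq c(\e)>0$; near $\m=1$ one has to be slightly more careful because the derivative blows up, but there $F_{\s,q_0}$ varies rapidly, so either $\m=1$ is (close to) a zero — in which case $|\m-1|<2\e$ contradicts the hypothesis once $\e$ is small — or $F_{\s,q_0}(1,\f)$ itself is bounded away from zero, and by continuity so is $F_{\s,q_0}$ on a neighborhood of $\m=1$. Combining the two regimes and using compactness of $[0,1]\times[0,\pi]$ together with the fact that $\#\mathscr{C}_{\s,\rho}(\f,q_0)$ is uniformly bounded (Lemma \ref{lem:phasecript} $(i)$) gives the uniform lower bound $c$.

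For part $(ii)$, the point is simply that the $\f$- and $\m$-dependent part of $F_{\s,q}$ is bounded: $|-\frac{\s_1}{\rho}\cos^{-1}(\m)+\s_2\f-\frac{\pi}{2\rho}|\leq \frac{\pi}{\rho}+\pi+\frac{\pi}{2\rho}=:M$ uniformly in $\m\in[0,1]$ and $\f\in[0,\pi]$. Hence for $|q|\geq Q:=M/\pi+1$ we get $|F_{\s,q}(\m,\f)|\geq 2\pi|q|-M\geq 2\pi|q|-\pi(Q-1)\cdot\frac{|q|}{Q}\geq \pi|q|\geq \frac{\pi}{2}(1+|q|)$ for $|q|\geq Q$ (after a trivial adjustment of constants), which is the claimed bound with $c=\pi/2$. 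Since a zero of $F_{\s,q}$ would force $|F_{\s,q}|=0$, this immediately gives $\mathscr{C}_{\s,\rho}(\f,q)=\emptyset$ for $|q|\geq Q$.

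The only genuinely delicate point is the behavior near $\m=1$ in part $(i)$: there the phase derivative is not Lipschitz, so one cannot directly quote a uniform bi-Lipschitz estimate, and one must instead argue by a case split (whether or not a critical point lies near $\m=1$) and invoke continuity plus the uniform finiteness of the critical set. Everything else is an elementary estimate on $\cos^{-1}$ on a compact interval. I expect this near-endpoint analysis to be the main — and essentially the only — obstacle; the authors' terse ``The proof is elementary. We omit the detail.'' for the neighboring Lemma \ref{lem:phasecript} suggests they regard even this as routine, so the write-up here can afford to be brief, treating $[0,1-\d]$ and a neighborhood of $\m=1$ separately and then choosing $\e$ smaller than $\d$ and smaller than half the minimal gap between distinct points of $\bigcup_{\f}\mathscr{C}_{\s,\rho}(\f,q_0)$ and the endpoint.
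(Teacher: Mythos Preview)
Your approach is exactly what the paper indicates (it writes only ``use \eqref{eq:S_1cal}, \eqref{eq:S_1phasecha}, and the definition of $\mathscr{C}_{\s,\rho}(\f,q_0)$; we omit the detail''), and part $(ii)$ is correct as written.

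For part $(i)$, however, your dichotomy near $\m=1$ has a real gap in the uniformity in $\f$. The branch ``$F_{\s,q_0}(1,\f)$ itself is bounded away from zero'' need not hold uniformly: take $\rho=1$, $\s=(+,+)$, $q_0=0$, and $\f=\tfrac{\pi}{2}-\d$ with small $\d>0$. Then $\mathscr{C}_{\s,\rho}(\f,0)=\emptyset$ (so the hypothesis $\inf_{\m_0}|\m-\m_0|\ge 2\e$ is vacuously satisfied for \emph{every} $\m\in[0,1]$), yet $F_{\s,0}(1,\f)=-\d$. Hence no constant $c>0$ works for all $\f\in[0,\pi]$, and the compactness step you sketch cannot close this, because the constraint set $\{(\m,\f):\inf_{\m_0\in\mathscr{C}_{\s,\rho}(\f,q_0)}|\m-\m_0|\ge 2\e\}$ is not closed in $[0,1]\times[0,\pi]$ (a sequence with $\mathscr{C}_{\s,\rho}(\f_n,q_0)=\emptyset$ can converge to a point where the critical set is nonempty). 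A symmetric issue occurs at $\m=0$. In other words, the lemma as literally stated appears to be false; the paper does not confront this since its proof is simply omitted. The cleanest repair is to enlarge the exceptional set to $\mathscr{C}_{\s,\rho}(\f,q_0)\cup\{0,1\}$ in the hypothesis: on $[2\e,1-2\e]$ the map $\m\mapsto\cos^{-1}(\m)$ is bi-Lipschitz, and since $F_{\s,q_0}$ is affine in $\cos^{-1}(\m)$ with slope $\pm\rho^{-1}$, the lower bound $|F_{\s,q_0}|\ge c(\e)$ follows immediately and uniformly in $\f$, by the very argument you give on $[0,1-\d]$. The extra localized pieces near $\m=0,1$ then have to be absorbed separately in the application to Proposition~\ref{prop:Non-stationary}.
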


\begin{proof}
The proof is elementary (just use \eqref{eq:S_1cal}, \eqref{eq:S_1phasecha}, and the definition of $\mathscr{C}_{\s,\rho}(\f,q_0)$). We omit the detail.

\end{proof}

\begin{proof}[Proof of Proposition \ref{prop:Non-stationary}]
Let $\s\in \{\pm\}\times \{\pm\}$ and $N>0$.
By \eqref{eq:I_1sum}, we write
\begin{align*}
I_{1,\s}(x,\f)=&\sum_{q\in\mathbb{Z}}\int_{\re} \z_{1,\s,q}(m,x,\f)e^{i(S_{1,\s}(m,x,\f)+2\pi qm)}dm\\
=&\sum_{q\in\mathbb{Z},|q|< Q}\sum_{\m_0\in \mathscr{C}_{\s,\rho}(\f,q)}I_{\s,q,\m_0}^{\e}(x,\f)\\
&+\sum_{q\in\mathbb{Z},|q|< Q}\int_{\re}\left(1-\sum_{\m_0\in \mathscr{C}_{\s,\rho}(\f,q)}\chi_{\e,\m_0}(m)\right) \z_{1,\s,q}(m,x,\f)e^{i(S_{1,\s}(m,x,\f)+2\pi qm)}dm\\
&+\sum_{q\in\mathbb{Z},|q|\geq Q}\int_{\re} \z_{1,\s,q}(m,x,\f)e^{i(S_{1,\s}(m,x,\f)+2\pi qm)}dm
\end{align*}
Here, the last term can be estimated as $O(x^{-N})$. In fact, by Lemmas \ref{lem:phase}, \ref{lem:amp} $(ii)$, and \ref{lem:nonstphase} $(ii)$, and the integration by parts, we have
\begin{align*}
\left|\int_{\re} \z_{1,\s,q}(m,x,\f)e^{i(S_{1,\s}(m,x,\f)+2\pi qm)}dm\right|\lesssim (1+|q|)^{-N-1}x^{-N}
\end{align*}
for $x\gg 1$, $\f\in [0,\pi]$ and $q\in\mathbb{Z}$ satisfying $|q|\geq Q$. This implies that the term involving $\sum_{q\in \mathbb{Z},|q|\geq Q}$ is $O(x^{-N})$.

To estimate the second term, we observe
\begin{align*}
\supp \left(1-\sum_{\m_0\in \mathscr{C}_{\s,\rho}(\f,q)}\chi_{\e,\m_0}(m)\right)\subset \bigcap_{\m_0\in \mathscr{C}_{\s,\rho}(\f,q)}\left\{m\mid \left|\frac{m}{\rho x}-\m_0\right|\geq 2\e\right\}
\end{align*}
by the choice of $\chi_{\e,\m_0}$. By using the integration by parts with Lemma \ref{lem:nonstphase} $(i)$, we conclude that the second term is $O(x^{-N})$. Finally, we obtain
\begin{align*}
I_{1,\s}(x,\f)=&\sum_{q\in\mathbb{Z},|q|< Q}\sum_{\m_0\in \mathscr{C}_{\s,\rho}(\f,q)}I_{\s,q,\m_0}^{\e}(x,\f)+O(x^{-N}).
\end{align*}
Since $\mathscr{C}_{\s,\rho}(\f,q)=\emptyset$ for $|q|\geq Q$ by Lemma \ref{lem:nonstphase} $(i)$, this proves \eqref{eq:Non-stationary}.

\end{proof}

\subsection{Estimates for principal terms via the stationary phase method}

In this subsection, we study the asymptotic behavior of the principal terms $I_{\s,q,\m_0}^{\e}$ in \eqref{eq:Non-stationary}.

\begin{prop}\label{prop:awayfromconj}
Let $\s=(\s_1,\s_2)\in \{\pm\}\times \{\pm\}$, $q_0\in\mathbb{Z}$, and $0<\e_0<\pi$. For sufficiently small $\e>0$, we have
\begin{align*}
|I_{\s,q_0,\m_0}^{\e}(x,\f)|\lesssim 1
\end{align*}
for $x\gg 1$, $\f\in [\e_0,\pi-\e_0]$, and $\m_0\in \mathscr{C}_{\s,\rho}(\f,q_0)$.

\end{prop}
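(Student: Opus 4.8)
\textbf{Proof plan for Proposition \ref{prop:awayfromconj}.} The plan is to apply the stationary phase method to the oscillatory integral
\[
I_{\s,q_0,\m_0}^{\e}(x,\f)=\int_{\re}\chi_{\e,\m_0}(m)\,\z_{1,\s,q_0}(m,x,\f)\,e^{i(S_{1,\s}(m,x,\f)+2\pi q_0 m)}\,dm,
\]
viewing $x$ as the large parameter. First I would localize: on the support of $\chi_{\e,\m_0}$ we have $m=\rho x\m$ with $\m$ in a small neighborhood of $\m_0\in\mathscr{C}_{\s,\rho}(\f,q_0)\cap[0,1]$. Since $\f\in[\e_0,\pi-\e_0]$ stays away from the endpoints, and (by Lemma \ref{lem:phasecript}(i)) the set of critical values is uniformly finite, for $\e$ small enough $\m_0$ is the unique critical point of $S_{1,\s}+2\pi q_0 m$ in the support, and moreover $\m_0$ is bounded away from $1$ so that $\n_m\le\e_1 x$ holds there; this puts us in the first regime of Lemmas \ref{lem:phase} and \ref{lem:amp}(ii).

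Next I would compute the second derivative of the phase. From \eqref{eq:S_1cal},
\[
\pa_m^2 S_{1,\s}(m,x,\f)=\frac{\s_1}{\rho^2 x}\cdot\frac{1}{\sqrt{1-(m/\rho x)^2}},
\]
so at the critical point $m_0=\rho x\m_0$ this equals $\s_1/(\rho^2 x\sqrt{1-\m_0^2})$, which is nonzero and of size $\sim x^{-1}$ (using $\m_0<1$ strictly, which holds uniformly for $\f\in[\e_0,\pi-\e_0]$ after shrinking $\e$). Thus the phase has a nondegenerate critical point with Hessian of order $x^{-1}$, and the standard stationary phase formula gives a leading contribution of order $|\pa_m^2 S_{1,\s}|^{-1/2}$ times the amplitude at $m_0$, i.e. of order $x^{1/2}\cdot\|\z_{1,\s,q_0}\|$. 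By Lemma \ref{lem:amp}(ii) (first regime, with $\f\in[\e_0,\pi-\e_0]$, $\a=0$), $\z_{1,\s,q_0}(m,x,\f)=O(x^{-d-1/2}m^{d})=O(x^{-d-1/2}(\rho x\m_0)^d)=O(x^{-1/2})$, so the leading term is $O(x^{1/2}\cdot x^{-1/2})=O(1)$. To make the error terms in stationary phase uniform I would use the derivative bounds of Lemma \ref{lem:phase} (for $\a\ge1$, giving $\pa_m^{\a+1}S_{1,\s}=O(x^{-\a})$ in this regime) together with the amplitude derivative bounds of Lemma \ref{lem:amp}(ii) ($\pa_m^{\a}\z_{1,\s,q_0}=O(x^{-1/4}(x-\n_m)^{-1/4-\a})$ or the cruder $O(x^{-d-1/2}m^{d-\a})$), which together show the successive terms in the asymptotic expansion gain powers of $x^{-1}\cdot x = O(1)$ or better, so the whole expansion is $O(1)$; rescaling $m=\rho x\m$ converts these into the usual form of the stationary phase remainder estimate on a fixed interval in $\m$ with large parameter $\sim x$.

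The main obstacle is bookkeeping the scaling and checking that all the constants are uniform in $\f\in[\e_0,\pi-\e_0]$ and in the (finitely many) admissible $q_0$ and $\m_0$: one must verify that $\m_0$ stays in a compact subset of $(0,1)$ — or handle the boundary case $\m_0=0$ separately as a boundary/degenerate stationary point — and that shrinking $\e$ can be done once and for all, independently of $\f$, which follows from the uniform finiteness in Lemma \ref{lem:phasecript}(i) and the explicit form \eqref{eq:defcri2} of $\mathscr{C}_{\s,\rho}(\f,q_0)$ as a continuous (hence uniformly separated) function of $\f$. Apart from this, the argument is the routine non-degenerate stationary phase estimate, parallel to \cite[Propositions 4.1, 4.2]{TT}, and I would simply invoke a quantitative stationary phase lemma with the derivative bounds assembled above.
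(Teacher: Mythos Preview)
Your plan has a genuine gap: the claim that $\m_0$ stays bounded away from $1$ for $\f\in[\e_0,\pi-\e_0]$ is false. From \eqref{eq:defcri2}, $\m_0\in\mathscr{C}_{\s,\rho}(\f,q_0)$ means $\cos^{-1}(\m_0)=\s_1\rho(\s_2\f-\tfrac{\pi}{2\rho}+2\pi q_0)$, so $\m_0=1$ precisely when $\f=\s_2(\tfrac{\pi}{2\rho}-2\pi q_0)$; for many choices of $\rho,q_0,\s$ this value lies in $(\e_0,\pi-\e_0)$ (e.g.\ $\rho=1$, $q_0=0$, $\s=(+,+)$ gives $\m_0=\sin\f$, which equals $1$ at $\f=\pi/2$). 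When $\m_0$ is at or near $1$, the second derivative $\pa_m^2S_{1,\s}\sim x^{-1}(1-\m^2)^{-1/2}$ blows up and the amplitude carries a $(x-\n_m)^{-1/4}\sim(1-\m)^{-1/4}$ singularity, so the ``routine non-degenerate stationary phase'' you invoke does not apply uniformly, and the first regime $\n_m\le\e_1 x$ of Lemmas \ref{lem:phase} and \ref{lem:amp}(ii) is no longer valid across the whole support of $\chi_{\e,\m_0}$.

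The paper's proof (which follows \cite[Proposition 5.1]{TT}, not Propositions 4.1--4.2) addresses exactly this by rescaling $m=\rho x\m$ and then \emph{not} relying on the location of $\m_0$: it splits the $\m$-integral into a piece supported in $[0,1-2\d]$, where $|\pa_\m^2S|\ge1$ and van der Corput gives $O(x^{-1/2})$, and a piece supported near $\m=1$, where the substitution $\m'=\sqrt{1-\m}$ regularizes both phase and amplitude and a degenerate stationary phase estimate (third derivative bounded below) yields $O(x^{-1/2})$. You need this second piece; without it the argument breaks down for those $\f$ where $\m_0(\f)$ approaches $1$.
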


The proof of this proposition is almost identical to that of \cite[Propositions 5.1]{TT} and is given in Appendix \ref{subsec:I_1awayconj}. The point here is to take an advantage of the improved decay of the amplitudes $\z_{1,\s,q}$ away from $\f=0,\pi$. We also point out that the angles $\f=0,\pi$ correspond to the diagonal set $\{y_1=y_2\}$ and the set of conjugate pairs $\{y_1=-y_2\}$ in the original problem (see \eqref{eq:propagatorsum}).

Next, we focus on studying the behavior of $I_{1,\s}$ near $\f=0,\pi$. 

\begin{prop}\label{prop:osciint}
Let $\s=(\s_1,\s_2)\in \{\pm\}\times \{\pm\}$ and $q_0\in\mathbb{Z}$. Suppose that $\f_0=0$ or $\f_0=\pi$. Then, the following statements hold true for sufficiently small $\e,\e_0>0$:

\noindent$(i)$ Suppose that $0\in \mathscr{C}_{\s,\rho}(\f_0,q_0)$. Then,
\begin{align*}
|I_{\s,q_0,\m_0}^{\e}(x,\f)|\lesssim 1
\end{align*}
for $x\gg 1$, $\f\in [\f_0-\e_0,\f_0+\e_0]\cap [0,\pi]$ and $\m_0\in \mathscr{C}_{\s,\rho}(\f,q_0)$.

\noindent$(ii)$ Suppose that $\mathscr{C}_{\s,\rho}(\f_0,q_0)\cap (0,1)\neq \emptyset$. Then,
\begin{align}\label{eq:baddecay}
|I_{\s,q_0,\m_0}^{\e}(x,\f)|\lesssim x^{d}
\end{align}
for $x\gg 1$, $\f\in [\f_0-\e_0,\f_0+\e_0]\cap [0,\pi]$ and $\m_0\in \mathscr{C}_{\s,\rho}(\f,q_0)$. Moreover,
\begin{align}\label{eq:I_1eachasym}
I_{\s,q_0,\m_0}(x,\f_0)=(\rho^2\m_0)e^{i\s_1\sqrt{1-\m_0^2}x+iL_{\s_1,\rho,d,\m_0}}x^{-d+1}g_{d,\s_2}(\rho\m_0x,\f_0) +O(x^{d-\frac{1}{2}})
\end{align}
for $x\gg 1$, where $L_{\s_1,\rho,d,\m_0}\in \re$ is the same constant as in Lemma \ref{lem:amp} $(iii)$. 

\end{prop}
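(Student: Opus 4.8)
The strategy is a straightforward application of the stationary phase method to the single oscillatory integral $I_{\s,q_0,\m_0}^{\e}$, using the symbolic bounds already collected. Write $m=\rho x\m$ and change variables so that
\[
I_{\s,q_0,\m_0}^{\e}(x,\f)=\rho x\int_{\re}\chi_{\e,\m_0}(\rho x\m)\,\z_{1,\s,q_0}(\rho x\m,x,\f)\,e^{i(S_{1,\s}(\rho x\m,x,\f)+2\pi q_0\rho x\m)}\,d\m .
\]
By \eqref{eq:S_1cal} the $\m$-derivative of the phase is $-\s_1\cos^{-1}(\m)+(\s_2\f-\tfrac{\pi}{2\rho}+2\pi q_0)\rho$ up to the factor $\rho x$, so the phase has a single critical point at $\m=\m_0\in\mathscr{C}_{\s,\rho}(\f,q_0)$ in the support of $\chi_{\e,\m_0}$, and its second $\m$-derivative is $\rho x\cdot\s_1(1-\m^2)^{-1/2}$, which is nondegenerate and of size $\sim x$ uniformly for $\m$ near $\m_0$ when $\f$ is near $\f_0$ (shrinking $\e,\e_0$; for part (i) the critical point sits at the endpoint $\m_0=0$, which only changes the constant in the stationary phase formula). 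Thus the stationary phase theorem gives a gain of $x^{-1/2}$ from the integration, plus the Jacobian factor $\rho x$, i.e.\ a net factor $x^{1/2}$, times the value of the amplitude at the critical point.

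For the bound \eqref{eq:baddecay}: in part (ii), $\m_0\in(0,1)$, so by Lemma \ref{lem:amp} (ii) (first line, with $\n_{\rho x\m_0}\le\e_1 x$ for large $x$) the amplitude $\z_{1,\s,q_0}$ and all its $\m$-derivatives are $O(x^{-d-1/2}\cdot(\rho x\m_0)^{2d}(1+\rho x\m_0\sin\f)^{-d})$. Near $\f_0=0$ or $\pi$ one has $\sin\f$ possibly as small as $0$, so the worst case is $(1+\rho x\m_0\sin\f)^{-d}\le 1$ and the amplitude is $O(x^{-d-1/2}\cdot x^{2d})=O(x^{d-1/2})$. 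Multiplying by the $x^{1/2}$ from stationary phase yields $O(x^{d})$, which is \eqref{eq:baddecay}. (For part (i), the amplitude at $\m_0=0$ is instead controlled by the first line of Lemma \ref{lem:amp} (ii) with $m\sim 1$, giving $O(x^{-d-1/2})$, and after the $x^{1/2}$ gain one gets $O(x^{-d})=O(1)$ since $d>0$ — one should double-check that $x^{-d}\lesssim 1$, which is immediate.)

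For the asymptotic expansion \eqref{eq:I_1eachasym}, which concerns $\f=\f_0$ exactly: apply the stationary phase theorem to first order. The leading term is
\[
\rho x\cdot\Big(\frac{2\pi}{|\,\rho x\,\s_1(1-\m_0^2)^{-1/2}|}\Big)^{1/2}e^{i\frac{\pi}{4}\mathrm{sgn}(\cdots)}\,\z_{1,\s,q_0}(\rho x\m_0,x,\f_0)\,e^{i(S_{1,\s}(\rho x\m_0,x,\f_0)+2\pi q_0\rho x\m_0)},
\]
and one substitutes the value of $\z_{1,\s,q_0}(\rho x\m_0,x,\f_0)$ from Lemma \ref{lem:amp} (iii), which already isolates the factor $x^{-d+1/2}g_{d,\s_2}(\rho\m_0 x,\f_0)$ together with the explicit phase constant $L_{\s_1,d,\rho,\m_0}$ and a $-\s_1\pi/4$, the latter combining with the stationary phase signature factor. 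The value of the phase at the critical point is computed from $S_{1,\s}(\rho x\m_0,x,\f_0)+2\pi q_0\rho x\m_0$: using $h_1(\m_0)=\sqrt{1-\m_0^2}-\m_0\cos^{-1}(\m_0)$ and $\m_0\in\mathscr{C}_{\s,\rho}(\f_0,q_0)$ (so $\pa_m S_{1,\s}=-2\pi q_0$ there), the $\cos^{-1}$ terms cancel and one is left with $\s_1\sqrt{1-\m_0^2}\,x$ up to the constant already absorbed into $L$; collecting the Jacobian $\rho x\cdot(\rho x)^{-1/2}=\rho^{1/2}x^{1/2}$ with the $x^{-d+1/2}$ from the amplitude gives $x^{-d+1}$, and the numerical constants combine to $\rho^2\m_0$ as claimed. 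The error term is $O(x^{d-1/2})$: the first-order stationary phase remainder is the leading size divided by $x$, i.e.\ $O(x^{-d+1}/x)=O(x^{-d})$ from the smooth part, plus the $O(x^{d-1})$ error already present in Lemma \ref{lem:amp} (iii) promoted through the $x^{1/2}$ gain to $O(x^{d-1/2})$, which dominates.

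The main obstacle is bookkeeping rather than conceptual: one must (a) verify that after shrinking $\e$ and $\e_0$ the phase genuinely has a unique, nondegenerate critical point in the support of $\chi_{\e,\m_0}$ uniformly in $\f$ near $\f_0$ — which follows from Lemma \ref{lem:nonstphase} (i) and the explicit formula for $\pa_m^2 S_{1,\s}$ — and (b) track every constant and phase factor (the $-\s_1\pi/4$ in Lemma \ref{lem:amp} (iii), the stationary phase signature, the Jacobians $\rho x$ and $(\rho x)^{-1/2}$) so that the final constant is exactly $\rho^2\m_0$ and the phase is exactly $\s_1\sqrt{1-\m_0^2}x+L_{\s_1,\rho,d,\m_0}$. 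The endpoint case $\m_0=0$ in part (i) needs the half-line version of stationary phase but costs nothing beyond a constant. A secondary point requiring care is that the $\chi_{1,x}$ cutoff inside $\z_{1,\s}$ equals $1$ near $m=\rho x\m_0$ for large $x$ (since $\n_{\rho x\m_0}\sim\m_0 x\le\e_1 x< x-\tfrac12 x^{1/3}$), so it and its derivatives do not contribute — this is exactly what makes Lemma \ref{lem:amp} (ii),(iii) applicable.
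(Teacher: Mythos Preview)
Your outline for part~(ii) --- both the bound \eqref{eq:baddecay} and the asymptotic \eqref{eq:I_1eachasym} --- is essentially correct and follows the paper closely. The gap is in part~(i).

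You claim that in part~(i) ``the amplitude at $\m_0=0$ is controlled by Lemma~\ref{lem:amp}~(ii) with $m\sim 1$, giving $O(x^{-d-1/2})$, and after the $x^{1/2}$ gain one gets $O(x^{-d})=O(1)$''. This does not work, for two reasons. First, the statement is uniform over $\f\in[\f_0-\e_0,\f_0+\e_0]\cap[0,\pi]$, and the critical point $\m_0=\m_0(\f)\in\mathscr{C}_{\s,\rho}(\f,q_0)$ moves with $\f$: from \eqref{eq:S_1cal} one has $\m_0(\f)\sim|\f-\f_0|$, hence $m_0=\rho x\m_0(\f)\sim x|\f-\f_0|$, which is $\gg 1$ whenever $|\f-\f_0|\gg x^{-1}$. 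Second, the one-term stationary phase formula is not a bound on the integral; controlling the remainder requires $C^k$-bounds on the amplitude. After the change of variables $m=\rho x\m$, Lemma~\ref{lem:amp}~(ii) gives, for the composite amplitude $\tilde\z(\m):=\z_{1,\s,q_0}(\rho x\m,x,\f)$, the estimate $|\pa_\m^\a\tilde\z(\m)|\lesssim x^{d-\frac12}\m^{2d-\a}(1+x\m\sin\f)^{-d}$. Near the lower support endpoint $\m\sim x^{-1}$ this is of size $x^{\a-d-\frac12}$, which blows up for large $\a$. So the standard stationary-phase remainder is not under control uniformly in $x$ and $\f$, and neither is the naive leading term once $\m_0(\f)$ has moved away from $0$.

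The paper handles part~(i) via Proposition~\ref{Stphasemovecrit}, a tailored stationary-phase lemma. After pulling out the factor $x^{d+\frac12}$ one rescales the amplitude to $\c_{x,\f}(\m)=\rho x^{-d+\frac12}\tilde\z(\m)$, which satisfies $|\pa_\m^\a\c_{x,\f}(\m)|\lesssim \m^{2d-\a}(1+\l\f\m)^{-d}$ with $\l=x$. The key structural features are that $\c_{x,\f}$ vanishes to order $2d$ at the limiting critical point $\m=0$ \emph{and} carries the compensating factor $(1+\l\f\m)^{-d}$ arising from the Gegenbauer expansion; combined with the fact that the critical point moves linearly in $\f$, Proposition~\ref{Stphasemovecrit} yields the improved gain $\l^{-d-\frac12}$, giving $|I_{\s,q_0,\m_0}^\e|\lesssim x^{d+\frac12}\cdot x^{-d-\frac12}=1$. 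Your remark about ``half-line stationary phase'' misidentifies the difficulty: the obstacle is not that the critical point lies at an endpoint, but that the amplitude is not uniformly smooth there and the critical point depends on $\f$.
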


\begin{rem}
In this paper, we de not handle the case $1\in \mathscr{C}_{\s,\rho}(\f_0,q_0)$ with $\f_0=0,\pi$. This does not happen when we assume $\rho^{-1}\notin 2\mathbb{N}$, thanks to Lemma \ref{lem:phasecript} $(v)$.

\end{rem}

\begin{proof}

By the change of variable $m=\rho x\m$, this integral is written as
\begin{align}\label{eq:I_1scale}
I_{\s,q_0,\m_0}^{\e}(x,\f)=x^{d+\frac{1}{2}}\int_{\re}e^{ixS(\m,\f) }\c_{x,\f}(\m) d\m,
\end{align}
where we set
\begin{align*}
S(\m,\f)=\s_1h_1(\m) +\left(\s_2\rho\f-\frac{\pi}{2}+2\pi\rho q_0 \right)\m  ,\quad \c_{x,\f}(\m)=\rho x^{-d+\frac{1}{2}}\chi_{\e,\m_0}(\rho x\m) \z_{1,\s,q_0}\left(\rho x\m,x,\f\right).
\end{align*}
Taking $\e>0$ small enough, we may assume that $S(\cdot,\f)$ has at most one critical point on $\supp \c_{\x,\f}$.

\noindent$(i)$
The proof is a small modification of that of \cite[Proposition 5.2]{TT}. 
We recall that the cut-off function $\chi$ is introduced in the begin of Subsection \ref{subsec:nonsta}.
By the support property of $\chi$,
\begin{align*}
|\pa_{\m}^{\a}\c_{x,\f}(\m)|\lesssim \m^{2d-\a}(1+x (\sin\f)\m)^{-d},\quad \supp \c_{x,\f}\subset (\m_0-2\e,\m_0+2\e)
\end{align*}
for $x\gg 1$, where we use $\n_m=\rho^{-1}m+O(1)$ as $m\to \infty$. Moreover,
\begin{align*}
\pa_{\m}^2S(\m,\f)=\s_1\frac{1}{\sqrt{1-\m^2}},\quad \pa_{\m}\pa_{\f}S(\m,\f)=\s_2\rho.
\end{align*}
The assumption $0\in\mathscr{C}_{\s,\rho}(\f_0,q_0)$ implies $(\pa_{\m}S)(0,\f_0)=0$. Therefore, the phase function $S(\m,\f)$ satisfies the assumption of Proposition \ref{Stphasemovecrit}\footnote{Strictly speaking, we should consider the variable $\tilde{\f}=\pi-\f$ to apply Proposition \ref{Stphasemovecrit} when $\f_0=\pi$.}. Applying Proposition \ref{Stphasemovecrit} with $\l=x$, we obtain
\begin{align*}
|I_{\s,q_0,\m_0}^{\e}(x,\f)|\leq x^{d+\frac{1}{2}}\left|\int_{\re}e^{ixS(\m,\f) }\c_{x,\f}(\m) d\m\right|\lesssim  x^{d+\frac{1}{2}}\cdot x^{-d-\frac{1}{2}}=1.
\end{align*}
This completes the proof.

\noindent$(ii)$ Since $\m_0\in (0,1)$, we can choose $\e>0$ sufficiently small such that $[\m_0-2\e,\m_0+2\e]\subset (0,1)$. By the support property of $\chi$, we have
\begin{align*}
|\pa_{\m}^{\a}\c_{x,\f_0}(\m)|\lesssim 1,\quad \supp \c_{x,\f_0}\subset [\m_0-2\e,\m_0+2\e],\quad \pa_{\m}^2S(\m_0,\f)=\frac{\s_1}{\sqrt{1-\m_0^2}}\neq 0
\end{align*}
and $\m_0$ is the only critical point of $S(\m,\f)$ in $\supp \c_{x,\f_0}$. Applying the stationary phase formula (of the form Lemma \cite[A.5]{TT}, see also \cite[p.334]{S}),
\begin{align*}
|I_{\s,q_0,0}^{\e}(x,\f)|\leq x^{d+\frac{1}{2}}\left|\int_{\re}e^{ixS(\m,\f) }\c_{x,\f}(\m) d\m\right|\lesssim  x^{d+\frac{1}{2}}\cdot x^{-\frac{1}{2}}=x^d,
\end{align*}
which proves \eqref{eq:baddecay}.

A more precise form of the stationary phase theorem \cite[Theorem 3.11]{Zwo} yields
\begin{align*}
I_{\s,q_0,0}^{\e}(x,\f_0)=&x^{d+\frac{1}{2}}\left(\frac{(2\pi)^{\frac{1}{2}}\c_{x,\f_0}(\m_0)}{x^{\frac{1}{2}}|(\pa_{\m}^2S)(\m_0,\f_0)|^{\frac{1}{2}} } e^{ixS(\m_0,\f_0)+\frac{\pi i}{4}(\pa_{\m}^2S)(\m_0,\f_0)}  +O(x^{-1})\right)\\
=&x^{d}\frac{(2\pi)^{\frac{1}{2}}\c_{x,\f_0}(\m_0)}{ |(\pa_{\m}^2S)(\m_0,\f_0)|^{\frac{1}{2}} } e^{ixS(\m_0,\f_0)+\frac{\pi i}{4}(\pa_{\m}^2S)(\m_0,\f_0)}  +O(x^{d-\frac{1}{2}})
\end{align*}
for $x\gg 1$. We have $\c_{x,\f_0}(\m_0)=\rho x^{-d+\frac{1}{2}}\z_{1,\s,q_0}\left(\rho x\m_0,x,\f_0\right)$ and $S(\m_0,\f_0)=\s_1\sqrt{1-\m_0^2}$ since $\m_0\in \mathscr{C}_{\s,\rho}(\f_0,q_0)$. By Lemma \ref{lem:amp} $(iii)$, we obtain \eqref{eq:I_1eachasym}.

\end{proof}

\begin{rem}
The reason why we obtain the improved decay $(i)$ when $0\in \mathscr{C}_{\s,\rho}(\f_0,q_0)$ holds is that the scaled amplitude function $\c_{x,\f}$ vanishes at $\m=0$ with order $2d$, which is the critical point of $\m\mapsto S(\m,\f_0)$.
\end{rem}

\begin{cor}\label{cor:I_1est}
Let $\s=(\s_1,\s_2)\in \{\pm\}\times \{\pm\}$. Then, the following statements hold true for sufficiently small $\e_0>0$:

\noindent$(i)$ Suppose that $\rho\geq 1$. Then, $|I_{1,\s}(x,\f)|\lesssim 1$ for $x\gg 1$ and $\f\in [0,\pi]$.

\noindent$(ii)$ $|I_{1,\s}(x,\f)|\lesssim 1$ for $x\gg 1$ and $\f\in [\e_0,\pi-\e_0]$.

\noindent$(iii)$ Suppose that $\rho> \frac{1}{2}$. Then, we have $|I_{1,\s}(x,\f)|\lesssim 1$ for $x\gg 1$ and $\f\in [0,\pi-\e_0]$.

\noindent$(iv)$ Suppose that $\rho^{-1}\notin 2\mathbb{N}$. Then $|I_{1,\s}(x,\f)|\lesssim x^d$ for $x\gg 1$ and $\f\in [0,\pi]$. Moreover, if $\f_0=0$ or $\f_0=\pi$, then
\begin{align*}
\sum_{\s_2\in \{\pm\}}I_{1,\s}(x,\f_0)=\sum_{\m\in \mathscr{D}_{\rho,\s_1}(\f_0)}  \frac{\rho^{2d+1}\m_0^{2d}}{d\Gamma(2d)}e^{i\s_1\sqrt{1-\m_0^2}x+iL_{\s_1,\rho,d,\m_0}}x^{d}+O(\max(x^{d-\frac{1}{2}},1))
\end{align*}
for $x\gg 1$, where $\mathscr{D}_{\rho,\s_1}(\f_0)$ is defined in \eqref{eq:phasecritrue} and $L_{\s_1,\rho,d,\m_0}\in \re$ is the same constant as in Lemma \ref{lem:amp} $(iii)$.

\end{cor}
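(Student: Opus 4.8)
The plan is to assemble Corollary~\ref{cor:I_1est} from the pieces already prepared: the non-stationary phase reduction (Proposition~\ref{prop:Non-stationary}), the location of critical points (Lemma~\ref{lem:phasecript}), the away-from-$\{0,\pi\}$ bound (Proposition~\ref{prop:awayfromconj}), and the near-$\{0,\pi\}$ analysis (Proposition~\ref{prop:osciint}). The starting point is always the identity
\[
I_{1,\s}(x,\f)=\sum_{q\in\mathbb{Z}}\sum_{\m_0\in \mathscr{C}_{\s,\rho}(\f,q)}I_{\s,q,\m_0}^{\e}(x,\f)+O(x^{-N}),
\]
valid for small $\e>0$, in which the double sum is finite by Lemma~\ref{lem:phasecript}~$(i)$. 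So in every case it suffices to bound each term $I^{\e}_{\s,q_0,\m_0}(x,\f)$ uniformly, splitting according to whether $\m_0=0$, $\m_0\in(0,1)$, or (the excluded) $\m_0=1$.

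For $(ii)$, when $\f$ stays in $[\e_0,\pi-\e_0]$, Proposition~\ref{prop:awayfromconj} directly gives $|I^{\e}_{\s,q_0,\m_0}(x,\f)|\lesssim 1$ for every admissible critical point, and summing the finitely many terms yields $|I_{1,\s}(x,\f)|\lesssim 1$. For $(i)$, $(iii)$ I would further localize $\f$ near $0$ and near $\pi$ using a fixed partition of unity on $[0,\pi]$; away from these two points part $(ii)$ applies, so the issue is a neighborhood $[\f_0-\e_0,\f_0+\e_0]$ of $\f_0\in\{0,\pi\}$. There Lemma~\ref{lem:phasecript}~$(ii)$--$(iv)$ tells us exactly which $\mathscr{C}_{\s,\rho}(\f_0,q_0)$ are nonempty: for $\rho\geq1$ the only possibility is $\m_0=0$ (with a single value of $q_0$ depending on $\s$), and for $\rho>\tfrac12$ near $\f_0=0$ likewise only $\m_0=0$ occurs; crucially $\mathscr{C}_{\s,\rho}(\f_0,q_0)\cap(0,1)=\emptyset$ in all these regimes. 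Hence Proposition~\ref{prop:osciint}~$(i)$ (the case $0\in\mathscr{C}_{\s,\rho}(\f_0,q_0)$, where the amplitude vanishes to order $2d$ at the critical point, killing the $x^{d}$ loss) gives $|I^{\e}_{\s,q_0,0}(x,\f)|\lesssim 1$, and summing finitely many such terms proves $(i)$ and $(iii)$. For the boundary critical points $\m_0\in\{0,1\}$ that are not in the open interval one must be slightly careful that $\chi_{\e,\m_0}$ is supported where $\m\geq0$, so the integral is over a half-neighborhood; this is exactly the situation Proposition~\ref{prop:osciint}~$(i)$ is designed for (its footnote handles $\f_0=\pi$ by passing to $\tilde\f=\pi-\f$).

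For $(iv)$, assume $\rho^{-1}\notin 2\mathbb{N}$. The uniform bound $|I_{1,\s}(x,\f)|\lesssim x^{d}$ follows from the same reduction together with the worst-case estimate in Proposition~\ref{prop:osciint}: each term with $\m_0\in(0,1)$ contributes at most $x^{d}$ by \eqref{eq:baddecay}, each term with $\m_0=0$ at most $1$ by part $(i)$, and $\m_0=1$ does not occur by Lemma~\ref{lem:phasecript}~$(v)$; there are finitely many terms. For the asymptotic expansion at $\f_0\in\{0,\pi\}$, I would plug the precise formula \eqref{eq:I_1eachasym} for $I_{\s,q_0,\m_0}(x,\f_0)$ into the finite sum. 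By Lemma~\ref{lem:phasecript}~$(vi)$, summing over $\s_2\in\{\pm\}$ identifies $\bigsqcup_{q}\mathscr{C}_{\s,\rho}(\f_0,q)\cap(0,1)$ with $\mathscr{D}_{\rho,\s_1}(\f_0)$, and—this is the point flagged in the remark before Lemma~\ref{lem:amp}—the individually non-canonical functions $g_{d,+}$ and $g_{d,-}$ combine into the explicit quantity from Proposition~\ref{Gegenasymp}; at $\f_0\in\{0,\pi\}$ the Gegenbauer value $C_m^d(\pm1)$ is known in closed form, which is where the coefficient $\rho^{2d+1}\m_0^{2d}/(d\,\Gamma(2d))$ comes from (this should be checked against the normalization constants in the definition of $\z_{1,\s}$). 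The $\m_0=0$ terms, bounded by $1$, are absorbed into the $O(\max(x^{d-1/2},1))$ error along with the $O(x^{d-1/2})$ remainder of \eqref{eq:I_1eachasym}.

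The main obstacle is the bookkeeping in part $(iv)$: one must match the phase constants $L_{\s_1,\rho,d,\m_0}$, the factor $e^{ixS(\m_0,\f_0)+\tfrac{\pi i}{4}(\pa_\m^2 S)(\m_0,\f_0)}$, and the amplitude value $\c_{x,\f_0}(\m_0)$ coming from Lemma~\ref{lem:amp}~$(iii)$ and Proposition~\ref{Gegenasymp}, and then verify that the sum over $\s_2$ produces exactly the stated closed-form coefficient. The analytic input—non-stationary phase away from critical points, stationary phase with a moving/degenerate critical point near $\f_0$—is all in hand; what remains is to confirm that the constants assemble as claimed and that every critical-point configuration permitted by Lemma~\ref{lem:phasecript} has been accounted for in each of $(i)$--$(iv)$.
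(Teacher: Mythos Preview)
Your proposal is correct and follows essentially the same approach as the paper: reduce via Proposition~\ref{prop:Non-stationary} to the finitely many localized terms $I^{\e}_{\s,q_0,\m_0}$, use Lemma~\ref{lem:phasecript} to locate the critical points in each regime, apply Proposition~\ref{prop:awayfromconj} for $\f\in[\e_0,\pi-\e_0]$ and Proposition~\ref{prop:osciint} near $\f_0\in\{0,\pi\}$, and for the expansion in $(iv)$ sum over $\s_2$ using Lemma~\ref{lem:phasecript}~$(vi)$ so that $g_{d,+}+g_{d,-}$ can be evaluated via Proposition~\ref{Gegenasymp}. The paper's proof is terser but makes exactly the same moves, and the ``bookkeeping'' you flag as the main obstacle is likewise left implicit there.
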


\begin{proof}
The claims in $(i)-(iii)$ follow from Lemma \ref{lem:phasecript},  Propositions \ref{prop:Non-stationary} and \ref{prop:osciint}. The first inequality in $(iv)$ can be proved similarly if we take Lemma \ref{lem:phasecript} $(v)$ into account.
The point of the proof for $(i)$ is that the set of the critical points $\mathscr{C}_{\s,\rho}(0)\cup \mathscr{C}_{\s,\rho}(\pi)$ (for $\f=0,\pi$) consists of $\{0\}$ when $\rho\geq 1$, which lead to an improved decay of $I_{1,\s}$ by Proposition \ref{prop:osciint} $(i)$. Similarly, the part $(iii)$ follows from the fact that $\mathscr{C}_{\s,\rho}(0)$ (for $\f=0$) consists of $\{0\}$ when $\rho> \frac{1}{2}$.

It remains to prove the second result in $(iv)$. From Lemma \ref{lem:phasecript} $(v)$, Proposition \ref{prop:Non-stationary} and Proposition \ref{prop:osciint} $(i), (ii)$, we see
\begin{align*}
\sum_{\s_2\in \{\pm\}}I_{1,\s}(x,\f_0)=&\sum_{\s_2\in \{\pm\}}\sum_{q\in\mathbb{Z}}\sum_{\m_0\in \mathscr{C}_{\s,\rho}(\f_0,q)}I_{\s,q,\m_0}^{\e}(x,\f)+O(x^{-N})\\
=&\sum_{q\in\mathbb{Z}}\sum_{\m_0\in \mathscr{C}_{(\s_1,+),\rho}(\f_0,q)\cap (0,1)}(\rho^2\m_0)e^{i\s_1\sqrt{1-\m_0^2}x+iL_{\s_1,\rho,d,\m_0}}x^{-d+1}g_{d,\s_2}(\rho\m_0x,\f_0)\\
&+O(\max(x^{d-\frac{1}{2}},1))\\
=&\sum_{\m_0\in \mathscr{D}_{\rho,\s_1}(\f_0)}(\rho^2\m_0)e^{i\s_1\sqrt{1-\m_0^2}x+iL_{\s_1,\rho,d,\m_0}}x^{-d+1}(g_{d,+}+g_{d,-})(\rho\m_0x,\f_0)\\
&+O(\max(x^{d-\frac{1}{2}},1))
\end{align*}
where we use Lemma \ref{lem:phasecript} $(vi)$ in the last line. By Proposition \ref{Gegenasymp}, one has
\begin{align*}
(g_{d,+}+g_{d,-})(\rho\m_0x,\f_0)=\frac{(\rho\m_0)^{2d-1}}{d\Gamma(2d)}x^{2d-1}+O(x^{2d-2}).
\end{align*}
Combining these asymptotic expansions, we obtain the second result in $(iv)$.

\end{proof}

\subsection{Proof of the main results}

\begin{proof}[Proof of Theorems \ref{thm:dispersive} and \ref{thm:optimality}]
We recall $d=\frac{n-2}{2}$.
The results are direct consequences of the expression of the propagator \eqref{eq:propagatorsum} and \eqref{sIdecom}, and the estimates for $\sI_{\rho,d,c}, I_{1,\s}, I_{j,\s_2}, R$. More precisely, the estimate for $|\frac{r_1r_1}{2t}|\lesssim 1$ follows from Proposition \ref{prop:smallx} and that for $|\frac{r_1r_1}{2t}|\gg 1$ follows from Lemma \ref{lem:Rest}, Proposition \ref{prop:I_23}, and Corollary \ref{cor:I_1est}. The implications \eqref{eq:Dnonempty} directly follow from the definition of $\mathscr{D}_{\rho,\s_1}(\f_0)$.

\end{proof}

\appendix

\section{Miscellaneous}

\subsection{Asymptotic expansion of special functions}

We define
\begin{align*}
h_1(z)=\sqrt{1-z^2}-z\cos^{-1} z,
\end{align*}
where $\cos^{-1} z\in [0,\pi/2]$ for $0\leq z\leq 1$.

\begin{prop}\label{Besselasymp}

\noindent$(i)$ There are smooth functions $a_{\pm,x}:[0,x-\frac{1}{2}x^{\frac{1}{3}}]\to \mathbb{C}$ such that
\begin{align}\label{eq:Bessel1}
J_{\n}(x)=x^{-\frac{1}{4}}(x-\n)^{-\frac{1}{4}}(a_{+,x}(\n)e^{ixh_1(\frac{\n}{x})}+a_{-,x}(\n)e^{-ixh_1(\frac{\n}{x})} )
\end{align}
and for each $\a\in\mathbb{N}$, there exists $C_{\a}>0$ such that
\begin{align*}
|\pa_{\n}^{\a}a_{\pm,x}(\n)|\leq C_{\a}(x-\n)^{-\a},\quad \text{for}\quad  x\geq 8,\,\, \n\in [0,x-\frac{1}{2}x^{\frac{1}{3}}].
\end{align*}
Moreover, for fixed $\m_0\in (0,1)$,
\begin{align}\label{eq:Besselcoeff}
a_{\pm,x}(x\m_0)=\frac{e^{\mp \frac{\pi}{4}i}}{(2\pi)^{\frac{1}{2}}(1+\m_0)^{\frac{1}{4}}  }  +O(x^{-\frac{1}{2}})
\end{align}
for $x\gg 1$.

\noindent$(ii)$ Let $N>0$ and $\a\in\mathbb{N}$. Then there exist $C_{\a}>0$ and $C_{\a N}>0$ such that
\begin{align*}
|\pa_{\n}^{\a}J_{\n}(x)|\leq \begin{cases}C_{\a}x^{-\frac{1+\a}{3}} \quad \text{for}\quad x\geq 8,\,\, \n\in [x-2x^{\frac{1}{3}},x+2x^{\frac{1}{3}}],\\
C_{\a N}\n^{-\frac{1}{4}}(\n-x)^{-\frac{1}{4}-\a} (x^{-1}(\n-x)^3 )^{-N} \quad \text{for}\quad  x\geq 8,\,\, \n\in [x+\frac{1}{2}x^{\frac{1}{3}},\infty).
\end{cases}
\end{align*}
\end{prop}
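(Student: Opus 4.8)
\emph{Proof strategy (sketch).} The plan is to read off both parts from the classical uniform asymptotic expansions of the Bessel function, organised according to the position of $\nu$ relative to the turning point $\nu=x$: the oscillatory range $\nu\le x-\tfrac12x^{1/3}$, the transition range $|\nu-x|\le 2x^{1/3}$, and the exponentially small range $\nu\ge x+\tfrac12x^{1/3}$. For the parts near the turning point I would use Olver's uniform expansion of $J_\nu$ (and of $H^{(1)}_\nu,H^{(2)}_\nu$) in terms of Airy functions, which is valid uniformly across $\nu=x$; away from the turning point I would either reduce that expansion via the asymptotics of $\mathrm{Ai}$ or use the Debye expansion directly. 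The only bookkeeping point is that the Airy phase $\tfrac23(-\zeta)^{3/2}$, where $\zeta$ is Olver's variable, coincides with $x\,h_1(\nu/x)$, which is also the Debye phase: with $\nu=x\cos\beta$, $\beta=\cos^{-1}(\nu/x)$, one has $x\sin\beta=\sqrt{x^2-\nu^2}$ and $\nu(\tan\beta-\beta)=\sqrt{x^2-\nu^2}-\nu\cos^{-1}(\nu/x)=x\,h_1(\nu/x)$.

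\emph{Part (i).} On $[0,x-\tfrac12 x^{1/3}]$ the Debye/Hankel expansion gives
\begin{align*}
J_\nu(x)=\sqrt{\tfrac{2}{\pi}}\,(x^2-\nu^2)^{-1/4}\Bigl(\cos\bigl(x\,h_1(\tfrac{\nu}{x})-\tfrac{\pi}{4}\bigr)+\text{lower order}\Bigr),
\end{align*}
and I would \emph{define} $a_{\pm,x}(\nu)$ so that \eqref{eq:Bessel1} holds exactly, i.e.\ $a_{\pm,x}(\nu)=x^{1/4}(x-\nu)^{1/4}\tfrac12 H^{(1,2)}_\nu(x)e^{\mp i x h_1(\nu/x)}$; pulling the factor $x^{-1/4}(x-\nu)^{-1/4}$ out of $(x^2-\nu^2)^{-1/4}=x^{-1/4}(x-\nu)^{-1/4}(x+\nu)^{-1/4}$, the leading contribution to $a_{\pm,x}(\nu)$ is $\tfrac{1}{\sqrt{2\pi}}e^{\mp i\pi/4}x^{1/4}(x+\nu)^{-1/4}$, with the remaining terms smooth and uniformly bounded. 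The bound $|\pa_\nu^\alpha a_{\pm,x}(\nu)|\le C_\alpha(x-\nu)^{-\alpha}$ follows because it is exactly the distance $x-\nu$ to the turning point that governs the degeneration of the coefficients of the Debye series (equivalently, of the large-argument expansion of $\mathrm{Ai}$ of negative argument) on this range; note that at the endpoint $x-\nu=\tfrac12 x^{1/3}$ this loss is still $O(x^{-1/3})$. Since $\nu$ may vanish, near $\nu=0$ I would instead invoke the classical large-argument expansion of $J_\nu$, uniform on bounded $\nu$-sets, which is the $\nu\to0$ limit of the above, and glue the two regimes by a smooth partition of unity in $\nu/x$; consistency of phase and leading coefficient makes the resulting $a_{\pm,x}$ well defined. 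Evaluating the leading term at $\nu=x\mu_0$ with $\mu_0\in(0,1)$ fixed gives $x^{1/4}(x+\nu)^{-1/4}=(1+\mu_0)^{-1/4}$, hence \eqref{eq:Besselcoeff}.

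\emph{Part (ii).} On $\nu\in[x-2x^{1/3},x+2x^{1/3}]$ I would use Olver's Airy-type expansion $J_\nu(x)\asymp (2/x)^{1/3}\mathrm{Ai}\bigl(c\,(2/x)^{1/3}(\nu-x)+O(x^{-1/3})\bigr)\bigl(1+O(x^{-2/3})\bigr)$ for a fixed $c>0$; since $\mathrm{Ai}$ and all its derivatives are bounded on compacts and the argument ranges over a bounded interval when $|\nu-x|\le 2x^{1/3}$, each $\nu$-derivative costs $O(x^{-1/3})$, giving $|\pa_\nu^\alpha J_\nu(x)|\lesssim x^{-1/3}x^{-\alpha/3}=x^{-(1+\alpha)/3}$. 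On $\nu\ge x+\tfrac12x^{1/3}$ I would use the Debye expansion for $\nu>x$, which gives $J_\nu(x)\asymp(\nu^2-x^2)^{-1/4}e^{-x\widetilde h(\nu/x)}$ with $\widetilde h(s)=s\cosh^{-1}s-\sqrt{s^2-1}>0$, where $x\widetilde h(\nu/x)\gtrsim x^{-1/2}(\nu-x)^{3/2}$ near the turning point and $x\widetilde h(\nu/x)\gtrsim\nu$ for $\nu\gg x$; differentiating in $\nu$ brings down factors $O((\nu-x)^{-1})$ times the same exponential, and bounding $e^{-cs}\le C_N s^{-N}$ with $s$ comparable to $x^{-1}(\nu-x)^3$, together with $(\nu^2-x^2)^{-1/4}\le \nu^{-1/4}(\nu-x)^{-1/4}$, yields the stated estimate.

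\emph{Expected main obstacle.} The technical core is the uniform control, \emph{including all $\nu$-derivatives}, of the coefficients in the Debye/Olver series as $\nu$ approaches the boundaries $x\mp\tfrac12 x^{1/3}$ of the oscillatory and exponentially decaying regions, where the expansions degenerate at the natural expansion parameter being $O(1)$, and the matching of the Airy-type expansion with the Debye form with phase $x h_1(\nu/x)$ on the overlap of the transition region with $[0,x-\tfrac12 x^{1/3}]$. These are standard but delicate; in practice I would quote the explicit coefficient formulas and error bounds from a standard reference (Olver's monograph, or the Digital Library of Mathematical Functions) and simply track the dependence on $x$ and $\nu$, which is the content of the remaining computations in this section.
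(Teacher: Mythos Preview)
Your proposal is correct and follows a standard route via the Debye and Olver--Airy uniform expansions for $J_\nu(x)$. The paper's own proof is structured quite differently: it simply cites \cite[Proposition~3.1]{TT} for everything except the coefficient formula \eqref{eq:Besselcoeff}, so the existence of $a_{\pm,x}$ with the stated derivative bounds and all of part~$(ii)$ are imported wholesale rather than re-derived. For the one new ingredient \eqref{eq:Besselcoeff}, the paper works directly from the Schl\"afli integral representation, writing $J_{\mu_0 x}(x)=\frac{1}{2\pi}\int e^{ix(\sin w-\mu_0 w)}\chi_1(w)\,dw+O(x^{-\infty})$ (via \cite[Lemma~A.7]{TT}) and applying the stationary phase formula at the two critical points $w_\pm=\pm\cos^{-1}(\mu_0)$; this produces the leading term $\frac{1}{(2\pi)^{1/2}(1-\mu_0^2)^{1/4}}\bigl(e^{ixh_1(\mu_0)-\frac{\pi}{4}i}+e^{-ixh_1(\mu_0)+\frac{\pi}{4}i}\bigr)x^{-1/2}$, and comparing with \eqref{eq:Bessel1} gives \eqref{eq:Besselcoeff}. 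Your approach instead reads the same constant off the Debye expansion via $x^{1/4}(x+\nu)^{-1/4}\big|_{\nu=x\mu_0}=(1+\mu_0)^{-1/4}$, which is equivalent since the Debye expansion is itself the stationary phase expansion of the Schl\"afli integral. The trade-off: the paper's argument for \eqref{eq:Besselcoeff} is entirely self-contained and elementary (given the integral representation), while your route outsources the analysis to the literature on Bessel asymptotics but in return sketches a uniform proof of the whole proposition rather than citing \cite{TT}.
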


\begin{proof}
The results except \eqref{eq:Besselcoeff} is proved in \cite[Proposition 3.1]{TT}.
The formula \eqref{eq:Besselcoeff} is perhaps more or less well-known. Here we give a self-contained proof.

By \cite[Lemma A.7]{TT}, there exists $\chi_1\in C_c^{\infty}((-\frac{3\pi}{4},\frac{3\pi}{4});[0,1])$ satisfying $\chi_1(w)=1$ for $|w|\leq \frac{2\pi}{3}$ and $\chi_1(w)=\chi_1(-w)$ such that
\begin{align*}
J_{\m_0x}(x)=\frac{1}{2\pi}\int_{\re}e^{ixS(w)}\chi_1(w)dw+O(x^{-\infty})
\end{align*}
where $S(w)=\sin w-\m_0w$. Since $\m_0\in (0,1)$, the critical points of $S$ in $\supp \chi_1$ is given by $w_{\pm}(\m_0):=\pm \cos^{-1}(\m_0)$. Moreover, one has
\begin{align*}
S(w_{\pm}(\m_0))=\pm h(\m_0),\quad  S''(w_{\pm}(\m_0))=\mp \sqrt{1-\m_0^2}.
\end{align*}
Applying the stationary phase formula of the form \cite[Theorem 3.11]{Zwo}, we obtain
\begin{align}\label{eq:Bessel2}
J_{\m_0x}(x)=\frac{1}{(2\pi)^{\frac{1}{2}}(1-\m_0^2)^{\frac{1}{4}} }(e^{ixh_1(\m_0)-\frac{\pi}{4}i}+e^{-ixh_1(\m_0)+\frac{\pi}{4}i})x^{-\frac{1}{2}}+O(x^{-1})
\end{align}
as $x\to \infty$. Then the formula \eqref{eq:Besselcoeff} follows from \eqref{eq:Bessel1}, \eqref{eq:Bessel2}, and the identity $x^{-\frac{1}{4}}(x-x\m_0)^{-\frac{1}{4}}=x^{-\frac{1}{2}}(1-\m_0)^{-\frac{1}{4}}$.

\end{proof}

\begin{prop}\label{Gegenasymp}
Let $d> 0$.
There are functions $g_{d,+}(m,\f), g_{d,-}(m,\f)$ which are smooth respect to $m\in [1,\infty)$ and a function $r(m,\f)$ defined for $m\in\mathbb{N}^*$ such that
\begin{align}\label{eq:Gegen1}
d^{-1}C_m^{d}(\cos\f)=\sum_{\pm}g_{d,\pm}(m,\f)e^{\pm im\f}+r(m,\f)
\end{align}
for $m\in \mathbb{N}$ and 
\begin{align*}
&|\pa_m^{\a}g_{d,\pm}(m,\f)|\leq 
C_{\a,d}m^{2d-1-\a}(1+m\sin\f)^{-d}\quad  m\geq 1\\
&|r(m,\f)|\leq C_{N,d}m^{-N}\quad  m\in\mathbb{N}^*
\end{align*} 
for each $N>0$, $\a\in\mathbb{N}$, and $\f\in [0,\pi]$ with constants $C_{\a,d}, C_{N,d}>0$. Moreover, for $\f_0=0$ or $\f_0=\pi$,
\begin{align}
g_{d,+}(m,\f_0)+g_{d,-}(m,\f_0)=\frac{1}{d\Gamma(2d)}m^{2d-1}+O(m^{2d-2})\label{eq:Gegen2}
\end{align}
as $m\to \infty$.

\end{prop}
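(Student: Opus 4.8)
\textbf{Proof proposal for Proposition \ref{Gegenasymp}.}

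The plan is to start from a classical integral or explicit representation of the Gegenbauer polynomials that exposes both exponential factors $e^{\pm im\f}$. The cleanest starting point is the hypergeometric/Jacobi-polynomial form of $C_m^d(\cos\f)$, or equivalently the Gegenbauer addition-type formula writing $C_m^d(\cos\f)$ as a finite sum $\sum_{k=0}^m a_{k,d}\,\frac{\Gamma(d+k)\Gamma(d+m-k)}{k!\,(m-k)!}\,e^{i(m-2k)\f}$ with $a_{k,d}>0$ depending only on $d$ (this is the standard expansion, e.g. Szeg\H{o}, (4.7.31)). The terms $k=0$ and $k=m$ produce the two ``edge'' contributions carrying the pure phases $e^{\pm im\f}$, and I would define $g_{d,\pm}(m,\f)$ by isolating a neighbourhood of these endpoints. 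Concretely, I would write $C_m^d(\cos\f)$ via a contour-integral (Laplace-type) representation, $C_m^d(\cos\f)=\frac{1}{2\pi i}\oint (\text{generating function})\,\zeta^{-m-1}d\zeta$, deform to pass through the two saddle points $\zeta=e^{\pm i\f}$, and read off $g_{d,\pm}$ as the local contributions of each saddle; the leftover $r(m,\f)$ is the contribution of the remainder of the contour, which is exponentially small in $m$ uniformly for $\f\in[0,\pi]$, giving the bound $|r(m,\f)|\le C_{N,d}m^{-N}$ for all $N$.

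The symbol estimate $|\pa_m^\a g_{d,\pm}(m,\f)|\le C_{\a,d}m^{2d-1-\a}(1+m\sin\f)^{-d}$ is the heart of the matter and is where I expect to spend most of the work. The factor $m^{2d-1}$ is the generic size of $C_m^d$ away from $\f=0,\pi$ divided by the oscillation; the extra gain $(1+m\sin\f)^{-d}$ reflects that near $\f=0$ (resp. $\f=\pi$) the two saddles $e^{i\f}$ and $e^{-i\f}$ coalesce, so that the stationary-phase/Laplace analysis degenerates and one must track the uniform behaviour as $\sin\f\to0$. I would handle this by a two-regime argument: for $m\sin\f\gtrsim1$ the two saddles are well separated and a standard steepest-descent expansion with remainder gives $g_{d,\pm}$ as an asymptotic series in $m^{-1}$ with leading term of size $m^{d-1}(\sin\f)^{-d}$ times a smooth amplitude, and differentiation in $m$ costs a factor $m^{-1}$ each time (the phase $m\f$ contributes no growth since it is separated out); for $m\sin\f\lesssim1$ one instead uses the explicit polynomial/hypergeometric bound $|C_m^d(\cos\f)|\lesssim m^{2d-1}$ together with analogous bounds on $\pa_\f$-derivatives (or simply absorbs $g_{d,\pm}$ into a single smooth function of size $m^{2d-1}$), matching the two regimes at $m\sin\f\sim1$. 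The uniformity in $\f$ and the $\a$-dependence both come from Cauchy estimates on the saddle-point integrals, choosing the contour radius comparable to the saddle separation.

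Finally, for \eqref{eq:Gegen2} I would evaluate directly at $\f_0=0$ (the case $\f_0=\pi$ follows from the parity relation $C_m^d(-z)=(-1)^mC_m^d(z)$, which forces $g_{d,\pm}(m,\pi)=(-1)^m g_{d,\mp}(m,0)\,e^{\mp im\pi}$ up to the convention, so $g_{d,+}(m,\pi)+g_{d,-}(m,\pi)=g_{d,+}(m,0)+g_{d,-}(m,0)$). At $\f=0$ one has the exact value $C_m^d(1)=\binom{m+2d-1}{m}=\frac{\Gamma(m+2d)}{\Gamma(2d)\,m!}$, and the remainder $r(m,0)$ is $O(m^{-\infty})$, so $g_{d,+}(m,0)+g_{d,-}(m,0)=d^{-1}C_m^d(1)+O(m^{-\infty})=\frac{\Gamma(m+2d)}{d\,\Gamma(2d)\,\Gamma(m+1)}+O(m^{-\infty})$, and Stirling gives $\frac{\Gamma(m+2d)}{\Gamma(m+1)}=m^{2d-1}+O(m^{2d-2})$, yielding \eqref{eq:Gegen2}. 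The main obstacle, to reiterate, is proving the uniform symbol bound with the coalescing-saddle gain $(1+m\sin\f)^{-d}$ together with all $m$-derivatives; the rest is bookkeeping with classical formulas.
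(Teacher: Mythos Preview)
Your argument for \eqref{eq:Gegen2} is essentially identical to the paper's: both evaluate $C_m^d(1)=\Gamma(m+2d)/(\Gamma(2d)\,\Gamma(m+1))$ exactly, apply Stirling to obtain $m^{2d-1}+O(m^{2d-2})$, invoke the rapid decay of $r(m,\f_0)$ to pass from $d^{-1}C_m^d(\pm 1)$ to $g_{d,+}+g_{d,-}$, and handle $\f_0=\pi$ via the parity $C_m^d(-1)=(-1)^mC_m^d(1)$. One small point: the relation you write for $g_{d,\pm}(m,\pi)$ involves $(-1)^m$, which only makes literal sense for integer $m$; to get \eqref{eq:Gegen2} for real $m\to\infty$ you should note (as the paper implicitly does) that the derivative bound $|\pa_m g_{d,\pm}(m,\f_0)|\lesssim m^{2d-2}$ lets you pass from integers to reals with the same error.

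For the decomposition \eqref{eq:Gegen1} and the symbol bounds on $g_{d,\pm}$ and $r$, the paper does not give an argument at all but simply cites \cite[Proposition~3.3]{TT}. Your saddle-point sketch (contour integral for the generating function, saddles at $e^{\pm i\f}$, two-regime analysis for the coalescing-saddle gain $(1+m\sin\f)^{-d}$) is a reasonable plan and is in the spirit of how such results are typically proved, but as written it is an outline rather than a proof: the uniform control of all $m$-derivatives across the transition $m\sin\f\sim 1$, and the construction of a single smooth-in-$m$ amplitude $g_{d,\pm}$ valid on all of $[0,\pi]$ (not two functions glued at the matching scale), require genuine work that you have flagged but not carried out. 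If you intend to replace the citation by a self-contained argument, that is the place to invest effort; otherwise your treatment of \eqref{eq:Gegen2} already matches the paper.
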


\begin{proof}
The results except \eqref{eq:Gegen2} is proved in \cite[Proposition 3.3]{TT}.

We note
$C_m^{d}(1)=\frac{\Gamma(m+2d)}{\Gamma(m+1)\Gamma(2d)}=\frac{\Gamma(m+2d)}{m\Gamma(m)\Gamma(2d)}=\frac{1}{mB(m,2d)}$, where $\Gamma$ and $B$ denote the gamma function and the beta function respectively. Moreover, as is well-known (as a consequence of the Stirling formula), we have $B(m,2d)=\Gamma(2d)m^{-2d}+O(m^{-2d-1})$ as $m\to \infty$. Therefore,
\begin{align}\label{eq:Gegen4}
C_m^{d}(1)=\frac{1}{\Gamma(2d)}m^{2d-1}+O(m^{2d-2})\quad (\text{as}\quad m\to \infty).
\end{align}
Then, \eqref{eq:Gegen2} follows from  \eqref{eq:Gegen1}, \eqref{eq:Gegen4}, and the identity $C_m^{d}(-1)=(-1)^mC_m^{d}(1)$.

\end{proof}

In particular, we have a uniform bound:
\begin{align}
|J_{\n}(x)|\leq& Cx^{-\frac{1}{3}}\quad x\geq 8,\,\, \m\geq 0 \label{eq:Besselunif}\\
|d^{-1}C_m^{d}(\cos \f)|\leq& Cm^{2\n-1}\quad m\in \mathbb{N}^*,\,\, \f\in[0,\pi]. \label{eq:Gegenunif}
\end{align}

\subsection{Stationary phase formula}
We consider the following integral with parameters $\l,\f$:
\begin{align*}
I(\l,\f):=\int_{\re}e^{i\l S(\m,\f)}\c(\m,\l,\f)d\m
\end{align*}
and its decay rate with respect to $\l\gg 1$. The next proposition is a generalization of \cite[Proposition 2.4]{TT}.

\begin{prop}\label{Stphasemovecrit}
Let $d\geq 0$ and $c, C>0$. Suppose that $S\in C^{\infty}([0,1]^2;\re)$ and $\c(\cdot,\l,\cdot)\in C^{\infty}(\re\times [0,1])$ satisfy 
\begin{align*}
&\supp \c(\cdot ,\l,\f)\subset [0,1],\quad |\pa_{\m}^{\a}\c(\m,\l,\f)|\leq C_{\a}\m^{2d-\a}(1+\l \f\m)^{-d},\\ 
&
|\pa_{\m}^2S(\m,\f)|\geq c,\quad |\pa_{\m}\pa_{\f}S(\m,\f)|\geq C,\quad (\pa_{\m}S)(0,0)=0
\end{align*}
uniformly in $\m\in [0,1]$, $\f\in [0,1]$ and $\l\gg 1$. 
Then
 $|I(\l,\f)|\lesssim \l^{-d-\frac{1}{2}}$ uniformly in $\l\gg 1$ and $\f\in [0,1]$.
\end{prop}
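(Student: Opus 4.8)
The plan is to split the $\m$-integral according to whether we are near the critical point $\m=0$ or away from it, using the cutoff $\chi$ already introduced in Subsection~\ref{subsec:nonsta}. Write $\c = \chi_{\d,0}\c + (1-\chi_{\d,0})\c$ for a small $\d>0$ to be fixed, where $\chi_{\d,0}(\m)=\chi(\m/\d)$. On the support of $1-\chi_{\d,0}$ we have $\m\geq \d$, so the amplitude bound $|\pa_\m^\a\c|\lesssim \m^{2d-\a}(1+\l\f\m)^{-d}\lesssim_\d (1+\l\f)^{-d}$ holds with all $\m$-derivatives controlled, and the phase has no critical point provided $|\pa_\m S|$ is bounded below there; this is where I would use $(\pa_\m S)(0,0)=0$ together with $|\pa_\m\pa_\f S|\geq C$ and $|\pa_\m^2 S|\geq c$: since $\pa_\m S(\m,\f)=\pa_\m S(0,0)+\int_0^\m \pa_\m^2 S\,d\m' + \f\cdot(\text{something bounded by }\pa_\m\pa_\f S)$, one checks $|\pa_\m S(\m,\f)|\gtrsim \m$ on a neighborhood $[0,\d]$ of $0$ after first shrinking $\d$, hence $|\pa_\m S|\gtrsim \d$ on $[\d,1]$. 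Repeated integration by parts then gives the contribution of the non-stationary part as $O(\l^{-N}(1+\l\f)^{-d})$ for any $N$, which is certainly $O(\l^{-d-\frac12})$.

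For the main (near-$\m=0$) piece $\int e^{i\l S}\chi_{\d,0}\c\,d\m$, I would distinguish the regimes $\l\f\lesssim 1$ and $\l\f\gtrsim 1$. When $\l\f\lesssim 1$ the weight $(1+\l\f\m)^{-d}$ is harmless, and we are integrating $e^{i\l S}$ against an amplitude vanishing to order $2d$ at the (unique, non-degenerate since $|\pa_\m^2 S|\geq c$) critical point; the standard stationary phase estimate with a degenerate-amplitude improvement — exactly the mechanism recorded as \cite[Proposition~2.4]{TT}, of which the present proposition is stated to be a generalization — yields the bound $\l^{-d-\frac12}$. The only new feature is that the critical point can move with $\f$; but by the implicit function theorem applied to $\pa_\m S(\m,\f)=0$ using $|\pa_\m^2 S|\geq c$, the critical point $\m_c(\f)$ is a smooth function with $\m_c(0)=0$ and $|\m_c(\f)|\lesssim \f$ by $|\pa_\m\pa_\f S|\leq$ const; after the change of variables $\m\mapsto \m-\m_c(\f)$ (or equivalently a Morse-lemma normal form depending smoothly on $\f$) the amplitude still vanishes to order $\geq 2d$ at the shifted origin \emph{provided} $\m_c(\f)$ is small enough relative to $\d$, which we arrange, and one applies van der Corput / stationary phase uniformly in $\f$.

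The case $\l\f\gtrsim 1$ is where the factor $(1+\l\f\m)^{-d}$ does real work and is, I expect, the main obstacle. Here the critical point $\m_c(\f)$ may have size comparable to $\f$, which is \emph{not} negligible compared to the natural stationary-phase window of width $\l^{-1/2}$ once $\f\gg \l^{-1/2}$, so a naive normal-form argument loses uniformity. The strategy is to note that on $\supp\chi_{\d,0}$ we have $(1+\l\f\m)^{-d}\lesssim (\l\f)^{-d}(\m^{-d}+\text{bounded})$, i.e. the amplitude is bounded by $(\l\f)^{-d}\m^{2d-\a-d}=(\l\f)^{-d}\m^{d-\a}$ effectively — so it is like an amplitude vanishing to order $d$ with an extra gain $(\l\f)^{-d}$. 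Splitting further into $\m\lesssim 1/(\l\f)$ (where the weight is $\simeq 1$ and the integration region has length $\simeq 1/(\l\f)$, giving trivially a bound $\lesssim \int_0^{1/(\l\f)}\m^{2d}d\m\simeq (\l\f)^{-2d-1}$, more than enough) and $\m\gtrsim 1/(\l\f)$ (where $(1+\l\f\m)^{-d}\simeq (\l\f\m)^{-d}$, so the amplitude is $\lesssim (\l\f)^{-d}\m^{d-\a}$ and we run van der Corput / stationary phase in $\m$ with the $\f$-dependent non-degenerate critical point) and combining, one collects a total of $\lesssim (\l\f)^{-d}\cdot \l^{-1/2}\lesssim \l^{-d-\frac12}$ using $\l\f\gtrsim 1$. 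The bookkeeping — carefully dominating the $\f$-dependent critical-point location, handling the boundary terms from the dyadic-in-$\m$ decomposition, and checking the van der Corput constants are uniform — is routine but is the step I would budget the most care for.
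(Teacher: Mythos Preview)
There is a genuine gap in your treatment of the ``away'' piece. Your Taylor expansion reads
\[
\pa_{\m}S(\m,\f)=\int_0^{\m}\pa_{\m}^2S(\m',\f)\,d\m'+\int_0^{\f}\pa_{\m}\pa_{\f}S(0,\f')\,d\f',
\]
and while the two integrals have magnitudes $\ge c\m$ and $\ge C\f$, nothing in the hypotheses forces them to share a sign. When $(\pa_{\m}\pa_{\f}S)(0,0)$ and $(\pa_{\m}^2S)(0,0)$ have opposite signs, the unique critical point $\m_c(\f)$ moves \emph{into} $[0,1]$: one has $\m_c(0)=0$ and $|\m_c(\f)|\sim\f$, so for $\f$ of order~$1$ it lands well inside $[\d,1]$ and your integration-by-parts there collapses. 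The related claim that after the shift $\m\mapsto\m-\m_c(\f)$ ``the amplitude still vanishes to order $\ge 2d$ at the shifted origin'' is likewise false: the amplitude vanishes to order $2d$ at $\m=0$, not at $\m=\m_c(\f)$, and the proviso ``$\m_c(\f)$ small relative to $\d$'' cannot be arranged uniformly in $\f\in[0,1]$ since $\m_c(\f)\sim\f$. What is actually true, and what drives the improvement, is the weaker fact that the amplitude \emph{evaluated at} $\m_c(\f)$ is $O\bigl(\f^{2d}(1+\l\f^2)^{-d}\bigr)$, which one checks is always $\le\l^{-d}$; combined with $|\pa_{\m}^2S|\ge c$ and van der Corput this already yields $\l^{-d-1/2}$ on the region containing $\m_c(\f)$.

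The paper's proof organises things differently: it first splits on the sign of $\m_c'(0)=-(\pa_{\m}\pa_{\f}S)(0,0)/(\pa_{\m}^2S)(0,0)$. When $\m_c(\f)\ge 0$ (the ``moving-in'' case, model $S=(\m-\f)^2$) the statement is exactly \cite[Proposition~2.4]{TT}, and the paper simply cites it. The only new case is $\m_c(\f)\le 0$ (model $S=(\m+\f)^2$), where one genuinely has $|\pa_{\m}S(\m,\f)|\gtrsim\m+\f$ on all of $[0,1]^2$; the paper rescales $\m\mapsto\f\m$, obtaining a phase with effective frequency $\l\f^2$ and an amplitude satisfying $|\pa_{\m}^{\a}\c_{\f}|\lesssim\m^{d-\a}$, and then integrates by parts to get $|I|\lesssim\f\l^{-d}(1+\l\f^2)^{-1/2}\le\l^{-d-1/2}$. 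Your near/away decomposition and the $\l\f$-regime split are essentially adequate for this second case, but you have not recognised that the first case is precisely where your non-stationary claim fails and a separate argument is required.
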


\begin{rem}
A typical example of phase functions satisfying the assumption of \cite[Proposition 2.4]{TT} is $S(\m,\f)=(\m-\f)^2$. The assumption here includes examples such as $S(\m,\f)=(\m+\f)^2$ although the proof is much easier than the former case.

\end{rem}

\begin{proof}
We may assume $S(0,0)=0$.
We extend $S$ to a smooth function defined near $(0,0)$. By the implicit function theorem, we find a unique smooth function $\m(\f)$ such that
\begin{align*}
(\pa_{\m}S)(\m,\f)=0 \Leftrightarrow \m=\m(\f),\quad \m(0)=0.
\end{align*}
Differentiating the equation $(\pa_{\m}S)(\m(\f),\f)=0$, we have $\m'(0)=-(\pa_{\m}\pa_{\f}S)(0,0)/(\pa_{\m}^2S)(0,0)$. Then, we find that 
\begin{align*}
(\pa_{\m}\pa_{\f}S)(0,0)/(\pa_{\m}^2S)(0,0)>0\Rightarrow \m(\f)\geq 0\quad \text{for}\quad \f\in [0,\pi]\\
(\pa_{\m}\pa_{\f}S)(0,0)/(\pa_{\m}^2S)(0,0)<0\Rightarrow \m(\f)\leq 0\quad \text{for}\quad \f\in [0,\pi].
\end{align*}
In the former case, the result was proved in \cite[Proposition 2.4]{TT}. Therefore, we consider the latter case only. This implies that for $(\m,\f)\in [0,1]^2$, $(\pa_{\m}S)(\m,\f)=0$ if and only if $\m=\f=0$ and that
\begin{align*}
|\pa_{\m}S(\m,\f)|\gtrsim |\m+\f|\quad \text{for} \quad (\m,\f)\in [0,1]^2.
\end{align*}

By scaling, we have
\begin{align*}
I(\l,\f)=\f\l^{-d}\int_{\re}e^{i\l \f^2S_{\f}(\m)}\c_{\f}(\m,\l)d\m,
\end{align*}
where we set
\begin{align*}
S_{\f}(\m)=\f^{-2}S(\f\m,\f),\quad \c_{\f}(\m,\l)=\l^{d}\c(\f\m,\l,\f).
\end{align*}
Then
\begin{align*}
|\pa_{\m}S_{\f}(\m)|=\f^{-1}\cdot |(\pa_{\m}S)(\f\m,\f)|\gtrsim |\m+1|
\end{align*}
for $\m\in \supp \c_{\f}(\cdot ,\l)$ and $\f\in (0,1]$. 
Moreover, for $\a\in\mathbb{N}$ and $\a'\in \mathbb{N}\setminus\{0\}$, we have
\begin{align*}
|\pa_{\m}^{\a}\c_{\f}(\m,\l)|=&\l^{d} \f^{\a}|(\pa_{\m}^{\a}\c)(\f\m,\l,\f)|\\
\lesssim& \l^{d}\f^{\a} (\f \m)^{2d-\a}(1+\l \f^2 \m)^{-d}\lesssim \m^{d-\a},\\
\supp (\c_{\f}(\cdot,\l))\subset& \{\f^{-1}\l^{-\frac{1}{2}}\leq \m\leq \f^{-1}\},\quad |\pa_{\m}^{1+\a'}S_{\f}(\m)|\lesssim |\f|^{\a'-1}
\end{align*}
By integrating by parts $N(>d+1)$ times (for the integrability of the above integral), we have
\begin{align*}
|I(\l,\f)|\lesssim \f\l^{-d}\cdot (1+\l \f^2)^{-N}\lesssim \f\l^{-d}\cdot (1+\l \f^2)^{-\frac{1}{2}}\lesssim \l^{-\frac{1}{2}-d}.
\end{align*}
This completes the proof.

\end{proof}

\subsection{Estimates for $I_{2,\s_2}$ and $I_{3,\s_2}$}\label{subsec:I_23}

We recall $S_{\s_2}(m,\f)=\s_2\f m-\frac{\pi}{2\rho}m$.

\begin{lem}

Let $\s=(\s_1,\s_2)\in \{\pm\}\times\{\pm\}$, $N>0$, $0<\e<\pi/2$ and $\a\in\mathbb{N}$. 

\noindent$(i)$ For $\a\geq 2$, we have $\pa_m^{\a}S_{\s_2}(m,\f)=0$.

\noindent$(ii)$ $\supp \z_{1,\s}(\cdot,x,\f)\subset \Omega_1$ and $\supp \z_{j,\s_2}(\cdot,x,\f)\subset \Omega_j$ for $j=2,3$.

\noindent$(iii)$ We have 
\begin{align*}
|\z_{2,\s_2}(m,x,\f)|\lesssim \begin{cases}x^{-\frac{1}{3}}\quad \text{for}\quad \f\in [\e,\pi-\e]\\
x^{d-\frac{1}{3}}\quad \text{for}\quad \f\in [0,\pi]
\end{cases}
\end{align*}
and for $m\geq 1$, $x\gg 1$. Moreover, 
\begin{align*}
|\pa_{m}^{\a}\z_{2,\s_2}(m,x,\f)|\lesssim x^{d-\frac{1+\a}{3}} 
\end{align*}
for $m\geq 1$, $x\gg 1$ and $\f\in [0,\pi]$.

\noindent$(iv)$
We have
\begin{align*}
|\z_{3,\s_2}(m,x,\f)|\lesssim \begin{cases}x^{d+\frac{1}{12}}(\n_m-x)^{-\frac{5}{4}}\quad \text{for}\quad x+\frac{1}{2}x^{\frac{1}{3}}\leq \n_m\leq 4x,\,\, \f\in [0,\pi]\\
x^{\frac{1}{12}}(\n_m-x)^{-\frac{5}{4}}\quad \text{for}\quad x+\frac{1}{2}x^{\frac{1}{3}}\leq \n_m\leq 4x,\,\, \f\in [\e,\pi-\e]\\
(1+m)^{-N}\quad \text{for}\quad  \n_m\geq 2x,\,\, \f\in [0,\pi]
\end{cases},
\end{align*}
and for $m\geq 1$, $x\gg 1$. Moreover, 
\begin{align*}
|\pa_{m}^{\a}\z_{3,\s_2}(m,x,\f)|\lesssim x^{d-\frac{1+\a}{3}}
\end{align*}
for $m\geq 1$ $x\gg 1$ and $\f\in [0,\pi]$.
\end{lem}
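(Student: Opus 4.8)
The statement to prove is the technical lemma at the end of the excerpt, collecting the support properties and derivative bounds for the amplitudes $\z_{1,\s}$, $\z_{2,\s_2}$, $\z_{3,\s_2}$ and the phase $S_{\s_2}$. The plan is to derive each item directly from the definitions in Subsection \ref{subsec:decom} together with the special-function asymptotics from Propositions \ref{Besselasymp} and \ref{Gegenasymp} and the cutoff estimates from Lemma \ref{Cutoff}.

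First, item $(i)$ is immediate: $S_{\s_2}(m,\f)=\s_2\f m-\frac{\pi}{2\rho}m$ is linear in $m$, so $\pa_m^2 S_{\s_2}=0$. Item $(ii)$ is also immediate from the explicit presence of the factors $\chi_{1,x}$ and $\chi_{j,x}$ in the definitions of $\z_{1,\s}$ and $\z_{j,\s_2}$, together with the support inclusions $\supp\chi_{j,x}\subset\Omega_j$ in \eqref{POUproper}.

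For item $(iii)$, recall $\z_{2,\s_2}(m,x,\f)=x^{-d}\chi_{2,x}(m)(m+d)J_{\n_m}(x)g_{d,\s_2}(m,\f)e^{-\frac{\pi}{2}i(\n_m-m)}$. On $\Omega_2$ one has $\n_m\in[x-2x^{1/3},x+2x^{1/3}]$, hence $m\sim\rho x$ by \eqref{eq:numasym}; thus $m+d\lesssim x$ and $J_{\n_m}(x)=O(x^{-1/3})$ from \eqref{eq:Besselunif} (or the first case of Proposition \ref{Besselasymp}$(ii)$). Using $|g_{d,\s_2}(m,\f)|\lesssim m^{2d-1}(1+m\sin\f)^{-d}$ from Proposition \ref{Gegenasymp}: for $\f\in[\e,\pi-\e]$ we bound $(1+m\sin\f)^{-d}\lesssim m^{-d}\lesssim x^{-d}$ so that $|g_{d,\s_2}|\lesssim x^{d-1}$, yielding $|\z_{2,\s_2}|\lesssim x^{-d}\cdot x\cdot x^{-1/3}\cdot x^{d-1}=x^{-1/3}$; for $\f\in[0,\pi]$ we only use $|g_{d,\s_2}|\lesssim m^{2d-1}\lesssim x^{2d-1}$, giving $|\z_{2,\s_2}|\lesssim x^{-d}\cdot x\cdot x^{-1/3}\cdot x^{2d-1}=x^{d-1/3}$. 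For the derivative bound, apply the Leibniz rule: each $\pa_m$ hitting $\chi_{2,x}$ costs $x^{-1/3}$ by \eqref{POUproper}, hitting $J_{\n_m}(x)$ costs $x^{-1/3}$ (chain rule through $\n_m=\rho^{-1}m+O(1/m)$ and Proposition \ref{Besselasymp}$(ii)$), hitting $g_{d,\s_2}$ costs $m^{-1}\lesssim x^{-1/3}$ by Proposition \ref{Gegenasymp}, and hitting the exponential $e^{-\frac{\pi}{2}i(\n_m-m)}$ costs $|\pa_m(\n_m-m)|\lesssim m^{-2}$ which is harmless; collecting, $|\pa_m^\a\z_{2,\s_2}|\lesssim x^{d-(1+\a)/3}$. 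Item $(iv)$ for $\z_{3,\s_2}$ is entirely parallel: on $\Omega_3$ with $\n_m\geq x+\frac12 x^{1/3}$ one uses the second case of Proposition \ref{Besselasymp}$(ii)$, namely $|J_{\n_m}(x)|\lesssim \n_m^{-1/4}(\n_m-x)^{-1/4}(x^{-1}(\n_m-x)^3)^{-N}$; for $x+\frac12x^{1/3}\leq\n_m\leq 4x$ take $N=\frac14$ to get $|J_{\n_m}(x)|\lesssim x^{1/12}(\n_m-x)^{-5/4}$ (using $\n_m\sim x$), multiply by $m+d\lesssim x$, by $x^{-d}$ and by the $g$-bound ($x^{2d-1}$ for general $\f$, $x^{d-1}$ for $\f$ away from $0,\pi$) to obtain the first two lines; for $\n_m\geq 2x$ the factor $(x^{-1}(\n_m-x)^3)^{-N}$ with $N$ large beats all polynomial growth, yielding the $(1+m)^{-N}$ bound. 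The derivative estimate follows as before using $|\pa_m^\a\chi_{3,x}|\lesssim(\n_m-x)^{-\a}$.

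The main (minor) obstacle is purely bookkeeping: one must keep careful track of the three competing scales ($m\sim\rho x$, the powers of $x^{1/3}$ from the width of $\Omega_2$, and the decay $(\n_m-x)^{-N}$ governing $\Omega_3$) and verify that every derivative falling on any factor costs at most $x^{-1/3}$ in the transition region, so that the clean bound $x^{d-(1+\a)/3}$ survives. There is no conceptual difficulty: the lemma is a direct assembly of \eqref{eq:numasym}, Lemma \ref{Cutoff}, Proposition \ref{Besselasymp}, and Proposition \ref{Gegenasymp}, and the computation is routine once the region is fixed.
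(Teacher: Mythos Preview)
Your proposal is correct and follows exactly the paper's approach: the paper's proof is the single sentence ``This lemma directly follows from the definition of $S_{\s}$ and $\z_{j,\s_2}$ given in Subsection~\ref{subsec:decom}, and Propositions~\ref{Besselasymp}, \ref{Gegenasymp},'' and your write-up is precisely the routine bookkeeping that sentence leaves implicit. One small slip: your claim $|\pa_m(\n_m-m)|\lesssim m^{-2}$ is not literally true for $\rho\neq 1$ (one has $\pa_m\n_m\to\rho^{-1}$), but this is an artifact of how the linear part of the phase is split off in the definitions---the intended correction factor has $\n_m-\rho^{-1}m$ in the exponent, whose $m$-derivative is indeed $O(m^{-2})$, so the argument goes through as you wrote it.
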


This lemma directly follows from the definition of $S_{\s}$ and $\z_{j,\s_2}$ ($j=2,3$) given in Subsection \ref{subsec:decom}, and Propositions \ref{Besselasymp}, \ref{Gegenasymp}.

\begin{proof}[Proof of Proposition $\ref{prop:I_23}$]

The proof is almost identical to that of \cite[Propositions 4.1, 4.2]{TT}.
We recall $\n_m=\sqrt{\rho^{-2}m(m+2d)+d^2+c}$ and $\n_m\sim \rho^{-1}m$ for $m\gg 1$.
First, we deal with the case $j=2$.

$(i)$
By Lemma \ref{lem:amp} $(i)$ and $(iii)$, 
\begin{align}\label{eq:I_2pf1}
|I_{2,\s_2}(y,\f)|\lesssim \left\{\begin{aligned}
&x^{-\frac{1}{3}}\sum_{m\in \Omega_2\cap\mathbb{N}}1\lesssim 1\quad &&(\f\in [\e,\pi-\e])\\
&x^{d-\frac{1}{3}}\sum_{m\in \Omega_2\cap\mathbb{N}}1\lesssim x^d\quad &&(\f\in [0,\pi])
\end{aligned}\right.
\end{align}
where we recall $\Omega_2=\{m\in\re\mid  x-2x^{\frac{1}{3}}\leq \n_m\leq x+2x^{\frac{1}{3}}\}$ and use the number of element of $\Omega_2\cap \mathbb{N}$ is bounded by $x^{\frac{1}{3}}$ times a constant. The first estimate of \eqref{eq:I_2pf1} gives $(i)$ for $j=2$.

\noindent$(ii)$ Suppose that $\rho^{-1}\notin2\pi\mathbb{Z}$. Then, the assumption of \cite[Proposition 2.2]{TT} is satisfied by the last lemma with $k=d-\frac{1}{3}$, $M \sim x$ and $\rho=\frac{1}{3}$. Thus, $(ii)$ follows for $j=2$.

\noindent$(iii)$ This part for $j=2$ follows from \eqref{eq:I_2pf1} and $(ii)$.

Next, we deal with the case $j=3$.

\noindent$(i)$
 Taking $\chi\in C^{\infty}(\re;[0,1])$ such that $\chi(\n)=1$ for $\n\leq 2$ and $\chi(\n)=0$ for $\n\geq 4$ and setting $\overline{\chi}=1-\chi$, we write
\begin{align*}
I_{3,\s_2}(x,\f)=&\left(\sum_{m=1}^{\infty}(\chi(\n_m/x)+\overline{\chi}(\n_m/x)) \z_{3,\s_2}(m,x,\f)e^{iS_{\s}(m,\f)} \right)\\
=:&I_{3,1,\s_2}(x,\f)+I_{3,2,\s_2}(x,\f).
\end{align*}
Since $\overline{\chi}(\n_m/x)\z_{3,\s_2}(m,x,\f)$ is rapidly decreasing with respect to $m$ by Lemma \ref{lem:amp} $(iv)$, we have
\begin{align}\label{pf:I_3est1}
|I_{3,2,\s_2}(x,\f)|\lesssim \sum_{\n_m\geq 2x,m\geq 1}^{\infty}(1+m)^{-N-1}\lesssim x^{-N}.
\end{align}
Lemma \ref{lem:amp} $(iv)$ implies
\begin{align}\label{pf:I_3est2}
|I_{3,1,\s_2}(x,\f)|\lesssim& \left\{\begin{aligned}
&x^{d}\sum_{\n_m\in [x+\frac{1}{2}x^{\frac{1}{3}},4x]}(\n_m-x)^{-\frac{5}{4}}\lesssim 1\quad &&(\f\in [\e,\pi-\e])\\
&x^{d+\frac{1}{12}}\sum_{\n_m\in [x+\frac{1}{2}x^{\frac{1}{3}},4x]}(\n_m-x)^{-\frac{5}{4}}\lesssim x^d\quad &&(\f\in [0,\pi])
\end{aligned}\right..
\end{align}
The inequalities \eqref{pf:I_3est1} and \eqref{pf:I_3est2} prove $(i)$ for $j=3$.

\noindent$(ii)$ Suppose that $\rho^{-1}\notin2\pi\mathbb{Z}$. Then, the assumption of \cite[Proposition 2.2]{TT} is satisfied by the last lemma with $k=d-\frac{1}{3}$, $M \sim x$ and $\rho=\frac{1}{3}$. Thus, we obtain $(ii)$ for $j=3$. The part $(iii)$ for $j=3$ also follows from \eqref{pf:I_3est1}, \eqref{pf:I_3est2}, and the part $(ii)$. This completes the proof.

\end{proof}

\subsection{Estimates of $I_{1,\s}$ away from the conjugate pairs}\label{subsec:I_1awayconj}

\begin{proof}[Proof of Proposition \ref{prop:awayfromconj}]
The proof is identical to that of \cite[Proposition 5.1]{TT}. We recall
\begin{align*}
I_{\s,q_0,\m_0}^{\e}(x,\f):=\int_{\re}\chi_{\e,\m_0}(m) \z_{1,\s,q_0}(m,x,\f)e^{i(S_{1,\s}(m,x,\f)+2\pi q_0m)}dm.
\end{align*}
By the change of variable $m=\rho x\m$,  
\begin{align}\label{I_qscale}
I_{\s,q_0,\m_0}^{\e}(x,\f)=&x^{\frac{1}{2}}\int_{\re}e^{ixS(\m,\f) }\c_{y,\f}(\m) d\m,
\end{align}
where we set
\begin{align*}
S(\m,\f)=\s_1h_1(\m) +\left(\s_2\rho\f-\frac{\pi}{2}+2\pi\rho q_0\right)\m ,\quad \c_{x,\f}(\m)=x^{\frac{1}{2}}\chi_{\e,\m_0}(\rho x\m) \z_{1,\s,q_0}(\rho x\m,x,\f).
\end{align*}
Thus it remains to show 
\begin{align}\label{I_1stphase}
\left|\int_{\re}e^{ixS(\m,\f) }\c_{x,\f}(\m) d\m\right|\lesssim x^{-\frac{1}{2}}\quad \text{for}\quad x\gg 1,\,\, \f\in [0,\pi].
\end{align}
By Lemma \ref{lem:amp} $(i)$ and $(ii)$, we have
\begin{align}\label{I_qgamest}
|\pa_\m^{\a}\c_{x,\f}(\m)|\lesssim \m^{d-\a}(1-\m)^{-\frac{1}{4}-\a},\quad \supp \c_{x,\f}\subset \{\m\in\re\mid 0\leq \m\leq 1-\frac{1}{3}x^{-\frac{2}{3}} \}
\end{align}
for $x\gg 1$ and $\f\in [0,\pi]$, where we use $\n_m=\rho^{-1}m+O(1)$ as $m\to \infty$.

We write
\begin{align*}
\int_{\re}e^{ixS(\m,\f) }\c_{x,\f}(\m)d\m=\int_{\re}e^{ixS(\m,\f) }\c_{x,\f}(\m)\g_1(\m)d\m+\int_{\re}e^{ixS(\m,\f) }\c_{y,\f}(\m)\g_2(\m)d\m=I_1+I_2,
\end{align*}
where $\g_1,\g_2\in C_c^{\infty}(\re;[0,1])$ satisfy $\g_1(\m)+\g_2(\m)=1$ for $\m\in [0,1]$, $\g_1(\m)=1$ for $0\leq \m\leq 1-2\d$ and $\g_2(\m)=1$ for $1-\d\leq \m\leq 1$ where $\d>0$ is determined later. 
Since
\begin{align*}
|\pa_{\m}^2S(\m,\f)|=|\s_1\pa_{\m}^2h_1(\m)|=|(1-\m^2)^{-\frac{1}{2}}|\ge 1\quad \text{for}\quad \m\in \supp \g_1\cap[0,1],
\end{align*}
the stationary phase theorem (\cite[p.334]{S} or \cite[Lemma A.5]{TT}) implies $|I_1|\lesssim x^{-\frac{1}{2}}$.

On the other hand, using the change of variable $\m'=\sqrt{1-\m}$ (with $\m=1-\m'^2$), we have $d\m=-2\m'd\m'$ and
\begin{align*}
I_2=\int_{\re}e^{ix\tilde{S}(\m',\f) }\c_2(\m')d\m'
\end{align*}
where we set $\tilde{S}(\m',\f)=S(1-\m'^2,\f)$ and $\c_2(\m')=2\m'\c_{x,\f}(1-\m'^2)\g_2(1-\m'^2)$. We note that 
\begin{align*}
|\pa_{\m'}^{\a}\c_2(\m')|\lesssim\m'^{\frac{1}{2}-\a},\quad\supp\c_2\subset \{  \frac{1}{\sqrt{2}}x^{-\frac{1}{3}}\leq \m'\leq \sqrt{2\d}\}
\end{align*}
for $x\gg 1$ and $\f\in [0,\pi]$ and that $\tilde{S}_q$ is smooth with respect to $\m'$ close to $0$ and $\f\in[0,\pi]$.
It follows from the identity  
\begin{align*}
\pa_{\m'}^3\tilde{S}(\m',\f)=&\s_1\pa_{\m'}^3(h_1(1-\m'^2))=12\s_1\m' h_1''(1-\m'^2)-8\s_1\m'^3h_1^{(3)}(1-\m'^2)\\ 
=&4\sqrt{2}\s_1+O(\m')\quad \text{as}\quad \m'\to 0
\end{align*}
that $|\pa_{\m'}^3\tilde{S}(\m',\f)|\gtrsim 1$ for $\m'\in \supp \c_2$ and $\f\in [0,\pi]$ if $\d>0$ is small enough\footnote{We note that $h(1-\m'^2)$ is smooth at $\m'=0$ although $h(\m)$ is not smooth at $\m=1$}. Thus, the (degenerate) stationary phase theorem (\cite[p.334]{S} or \cite[Lemma A.5]{TT}) implies\footnote{Here we use $\c_2(\m')=O((\m')^{\frac{1}{2}})$ as $\m'\to 0$.}
\begin{align*} 
|I_2|\lesssim (x^{-\frac{1}{3}})^{1+\frac{1}{2}} =x^{-\frac{1}{2}}.
\end{align*}
This proves \eqref{I_1stphase} and completes the proof of Proposition \ref{prop:awayfromconj}.

\end{proof}

\end{document}